\numberwithin{equation}{section}
\theoremstyle{plain}
\newtheorem{thm}{Theorem}[section]
\newtheorem{prob}{Problem}[section]
\newtheorem{cor}[thm]{Corollary}
\newtheorem{lem}[thm]{Lemma}
\newtheorem{ex}[thm]{Example}
\theoremstyle{definition}
\newtheorem{defn}[thm]{Definition}
\theoremstyle{remark}
\newtheorem{rem}[thm]{Remark}
\begin{document}

\begin{frontmatter}
\title{On objective and strong objective consistent estimates of unknown parameters  for statistical structures in a Polish
group admitting an invariant metric}%\thanksref{T1}}
\runtitle{On objective and strong objective consistent estimates}
%\thankstext{T1}{Footnote to the title with the `thankstext' command.}

\begin{aug}
\author{\fnms{ Murman } \snm{Kintsurashvili}\thanksref{t1}\ead[label=e1]{m.kintsurashvili@gtu.ge}}

\author{\fnms{ Tengiz} \snm{Kiria}\thanksref{t2}\ead[label=e2]{t.kiria@gtu.ge}}
\and
\author{\fnms{Gogi} \snm{Pantsulaia}\thanksref{t3}\ead[label=e3]{g.pantsulaia@gtu.ge}}

\affiliation{Department of Mathematics, Georgian Technical University }%\thanksmark{m1}}

\thankstext{t1}{Phd student,Department of Mathematics, Georgian Technical University}
\thankstext{t2}{Phd student,Department of Mathematics, Georgian Technical University}
\thankstext{t3}{Full Professor,Department of Mathematics, Georgian Technical University}
\runauthor{M.Kintsurashvili, T.Kiria, G.Pantsulaia}

\address{Department of mathematics, Georgian Technical University,\\ Kostava Street. 77 , Tbilisi
DC  0175,\\ Republic of  Georgia \\
\printead{e1,e2,e3}}
\phantom{E-mail:\ }\printead*{}
\end{aug}

\begin{abstract}By using the notion of a Haar ambivalent set introduced by Balka, Buczolich and Elekes (2012), essentially new  classes of statistical structures having objective and strong objective estimates of unknown parameters are introduced in a Polish
non-locally-compact group admitting an invariant metric and relations between them  are studied in this paper. An example of such a weakly separated statistical structure  is constructed  for which a question asking ''{\it whether there exists a consistent estimate of an unknown parameter}''  is not solvable within the theory $(ZF)~\&~(DC)$.  A question asking ''{\it whether there exists an objective  consistent estimate of an unknown  parameter for any statistical structure in a non-locally compact Polish group with an invariant metric when subjective one exists}''  is answered positively when there exists  at least one  such a parameter the pre-image of which under this subjective estimate is a prevalent.
These results extend recent results of authors.  Some examples  of objective and strong objective consistent estimates in a
compact Polish group $\{0; 1\}^N$ are considered in this paper.

\end{abstract}

\begin{keyword}[class=MSC]
\kwd[Primary ]{62-02}
\kwd[; secondary ]{62D05}
\end{keyword}

\begin{keyword}
\kwd{An objective infinite-sample consistent estimate}
\kwd{shy set}
\kwd{ Haar ambivalent set}
\kwd{ Polish group}
\end{keyword}
%\tableofcontents
\end{frontmatter}

\section{Introduction} In order to explain  a big gap between the
theory of mathematical statistics and results of hypothesis testing,
 concepts of subjective and
objective infinite sample consistent estimates of a useful
signal in the linear one-dimensional stochastic model were
introduced in \cite{Pan14}. This approach essentially uses the concept of  Haar null sets in Polish topological vector spaces  introduced by
J.P.R. Christensen \cite{Chris73}.

The Polish topological vector space  ${\bf  R}^N$ of all real-valued sequences (equivalently, of infinite samples) equipped with Tychonoff metric plays a central role  in the theory of statistical decisions because
 a definition of any consistent estimate of an unknown parameter in various stochastic models without infinite samples is simply impossible.

Let explain from the point of view of the theory of Haar null sets in ${\bf  R}^N$  some confusions which were described
 by Jum Nunnally \cite{ Nunnally1960} and Jacob Cohen \cite{Cohen1994}:

 Let  $x_1,x_2, \cdots$  be an infinite sample obtained by  observation on  independent and  normally distributed real-valued random
variables with  parameters $(\theta,1)$, where $\theta$ is an unknown mean and the variance is equal to $1$.  Using this  infinite  sample we want to estimate
an unknown mean. If we denote by $\mu_{\theta}$ a linear Gaussian measure on ${\bf  R}$ with the probability density $\frac{1}{\sqrt{2\pi}}e^{-\frac{(x-\theta)^2}{2}}$, then the triplet
\begin{equation}
({\bf  R}^N,\mathcal{B}({\bf  R}^N),\mu_{\theta}^N)_{\theta \in R}\label{ccs}
\end{equation}
stands a statistical structure described our experiment, where $\mathcal{B}({\bf  R}^N)$ denotes the $\sigma$-algebra of Borel subsets of ${\bf  R}^N$. By virtue of the Strong Law of Large Numbers we know that the condition
\begin{equation}
 \mu_{\theta}^N(\{(x_k)_{k \in N}: (x_k)_{k \in N}\in {\bf  R}^N~\&~\lim_{n \to \infty}\frac{\sum_{k=1}^nx_k}{n}=\theta\}=1 \label{ccs}
\end{equation}
 holds true for each $\theta \in {\bf  R}$.

Take into account the validity of (1.2),  for construction of  a consistent infinite sample estimation of an unknown parameter $\theta$ a mapping $T$ defined by
\begin{equation}
T((x_k)_{k \in N})=\lim_{n \to \infty}\frac{\sum_{k=1}^nx_k}{n}\label{ccs}
\end{equation}
is used  by statisticians. As usual, null
hypothesis significance testing in the case $H_0:\theta=\theta_0$ assumes the following procedure: if an infinite sample $(x_k)_{k \in N} \in T^{-1}(\theta_0)$ then $H_0$ hypothesis is accepted and  $H_0$ hypothesis is rejected, otherwise. There naturally arises a question asking  whether can be explained  Jacob Cohen statement \cite{Cohen1994}: {\it "... Don’t look for a magic alternative to NHST [null
hypothesis significance testing] ... It does not exist."} Notice that a set $S$ of all infinite samples $(x_k)_{k \in N}$ for which there exist  finite limits of arithmetic means of their first $n$ elements  constitutes a proper Borel measurable vector subspace of ${\bf  R}^N$. Following  Christensen \cite{Chris73}, each proper Borel measurable vector subspace of an arbitrary Polish topological vector space is Haar null set and since  $S$ is a Borel measurable proper vector subspace of ${\bf  R}^N$ we claim  that the mapping $T$ is not defined for "almost every"(in the sense of Christensen \footnote{We say that a sentence $P(\cdot)$ formulated in term of an element of a Polish group $G$ is true  for "almost every" element
of $G$ if a set of all elements $g \in G$ for which $P(g)$ is false constitutes a Haar null set in $G$.} \cite{Chris73}) infinite sample. The latter relation means that for "almost every" infinite sample we reject null hypothesis $H_0$. This discussion can be used also to explain
Jum  Nunnally's \cite{ Nunnally1960} following conjecture: {\it "If the
decisions are based on convention they are termed arbitrary or
mindless while those not so based may be termed subjective. To
minimize type $II$ errors, large samples are recommended. In
psychology practically all null hypotheses are claimed to be false
for sufficiently large samples so ... it is usually nonsensical to
perform an experiment with the sole aim of rejecting the null
hypothesis".}

Now let $T_1:{\bf  R}^N \to R$ be another infinite sample consistent estimate of an unknown parameter $\theta$ in the above mentioned model, i.e.
\begin{equation}
 \mu_{\theta}^N(\{(x_k)_{k \in N}: (x_k)_{k \in N}\in {\bf  R}^N~\&~T_1((x_k)_{k \in N})=\theta\})=1 \label{ccs}
\end{equation}
for each $\theta \in {\bf  R}$.
Here naturally arises a question asking what are  those additional  conditions imposed on the estimate $T_1$  under  which the above-described  confusions  will be settled.

In this direction, first, notice that there must be no a parameter $\theta_0 \in R$ for which $T_1^{-1}(\theta_0)$ is Haar null set, because then for "almost every" infinite sample null hypothesis $H_0: \theta=\theta_0$ will be rejected. Second, there must be no a parameter $\theta_1 \in {\bf  R}$ for which $T_1^{-1}(\theta_1)$ is a prevalent set (equivalently, a  complement of a Haar null set) because then  for "almost every" infinite sample  null hypothesis $H_0: \theta=\theta_2$ will be rejected for each $\theta_2 \neq \theta_1$.
This observations lead us to additional  conditions imposed on the estimate $T_1$  which assumes that  $T_1^{-1}(\theta)$ must be  neither Haar null  nor
prevalent for each $\theta \in {\bf  R}$. Following \cite{Balka12}, a set which is neither Haar null  nor
prevalent is called a Haar ambivalent set.  Such estimates firstly were  adopted as objective  infinite sample consistent estimates of a useful signal in the linear one-dimensional stochastic  model(see, \cite{Pan13}, Theorem 4.1, p. 482).

It was proved in \cite{Pan13}  that
$T_n :{{\bf  R}}^n \to {{\bf  R}}$~$(n \in N)$ defined by
\begin{equation}
 T_n(x_1, \cdots, x_n)=-F^{-1}(n^{-1}\#( \{ x_1, \cdots, x_n \}
\cap (-\infty;0]))\label{ccs}
\end{equation}
for $(x_1, \cdots, x_n) \in {{\bf  R}}^n$,
is a consistent estimator of a useful signal $\theta$ in
one-dimensional  linear stochastic model
\begin{equation}
 \xi_k=\theta+\Delta_k~(k \in N),\label{ccs}
\end{equation}
where $\#(\cdot)$ denotes a counting
measure, $\Delta_k$  is a sequence of independent identically
distributed random variables on ${\bf  R}$ with strictly
increasing continuous distribution function $F$ and expectation of
$\Delta_1$ does not exist. In this direction the following two examples of simulations of  linear one-dimensional  stochastic
models have been considered.

\begin{ex}( \cite{Pan13}, Example 4.1, p. 484) Since a sequence of
real numbers $(\pi \times n -[\pi \times n])_{n \in N}$, where
$[\cdot]$ denotes an integer part of a real number,  is uniformly
distributed on $(0,1)$(see, \cite{KuNi74}, Example 2.1, p.17), we
claim that a simulation of a $\mu_{(\theta,1)}$-equidistributed
sequence $(x_n)_{n \le M}$  on $R$( $M$ is a "sufficiently large"
natural number and depends on a representation quality of the
irrational number  $\pi$), where $\mu_{(\theta,1)}$ denotes a
$\theta$-shift of the measure $\mu$ defined by distribution function $F$,  can
be obtained  by the formula
\begin{equation}
x_n=F_{\theta}^{-1} (\pi \times n -[\pi \times n])\label{ccs}
\end{equation}
for $n \le M$ and $\theta \in R$, where $F_{\theta}$
denotes a  distribution function corresponding to the
measure $\mu_{\theta}$.

 In this  model, $\theta$ stands a "useful signal".

 We set:

 (i) ~$n$ - the number of trials;

(ii) ~$T_n$ - an estimator defined by the formula (1.5);

 (iii) $\overline{X}_n$ - a sample average.

When $F(x)$ is a standard  Gaussian distribution function, by using
Microsoft Excel  we have obtained   numerical data placed in Table 1.
\begin{table*}
\caption{Estimates of the useful signal $\theta=1$ when the white noise is standard  Gaussian random variable}
\label{sphericcase}
\begin{tabular}{crrrrrrc}
\hline
 \multicolumn{1}{c}{$n$} & \multicolumn{1}{c}{$T_n$} & \multicolumn{1}{c}{$\overline{X}_n$} & \multicolumn{1}{c}{$n$} & \multicolumn{1}{c}{$T_n$} & \multicolumn{1}{c}{$\overline{X}_n$} \\
\hline
$ 50 $       &    $0.994457883$  & $1.146952654$  &     $550$ &   $1.04034032$  & $1.034899747$  \\

$ 100 $       &    $1.036433389$  & $1.010190601$  &      $600$ &   $1.036433389$  & $1.043940988$ \\

$ 150 $       &    $1.022241387$  & $1.064790041$  &    $650$ &   $1.03313984$  & $1.036321771$ \\

$ 200 $       &    $1.036433389$  & $1.037987511$  &      $700$ &   $1.030325691$  & $1.037905202$ \\

$ 250 $       &    $1.027893346$  & $1.045296447$  &      $750$ &   $1.033578332$  & $1.03728633$ \\

$ 300 $       &    $1.036433389$  & $1.044049728$  &      $800$ &   $1.03108705$  & $ 1.032630945$  \\

$ 350 $       &    $1.030325691$  & $1.034339407$  &      $850$ &   $1.033913784$  & $1.037321098$ \\

$ 400 $       &    $1.036433389$  & $1.045181911$  &      $900$ &   $1.031679632$  & $ 1.026202323$ \\

$ 450 $       &    $1.031679632$  & $1.023083495$  &      $950$ &   $1.034178696$  & $1.036669278$  \\

$ 500 $       &    $1.036433389$  & $1.044635371$  &      $1000$ &   $1.036433389 $  & $1.031131694
$  \\
\end{tabular}
\end{table*}
Notice  that results of computations presented in Table 1 show us that both statistics $T_n$
and $\overline{X}_n$ give us a good estimates of the "useful signal" $\theta$ whenever a generalized "white noise"
in that case has a finite absolute moment
of the first order  and its moment of the first order is equal to
zero.

Now let $F$ be  a
linear Cauchy distribution function  on $R$, i.e.
\begin{equation}
F(x)=\int_{-\infty}^x\frac{1}{\pi(1+t^2)}dt ~(x \in R). \label{ccs}
\end{equation}

Numerical data  placed in Table 2 were obtaining by using Microsoft Excel  and Cauchy distribution
calculator of the high accuracy \cite{Kaisan}.
\begin{table*}
\caption{Estimates of the useful signal $\theta=1$ when the white noise is Cauchy random variable}
\label{sphericcase}
\begin{tabular}{crrrrc}
\hline
 \multicolumn{1}{c}{$n$} & \multicolumn{1}{c}{$T_n$} & \multicolumn{1}{c}{$\overline{X}_n$}  & \multicolumn{1}{c}{$n$} & \multicolumn{1}{c}{$T_n$} & \multicolumn{1}{c}{$\overline{X}_n$} \\
\hline
$ 50 $       &    $1.20879235$  & $2.555449288$   &     $550$ &   $1.017284476$  & $41.08688757$  \\

$ 100 $       &    $0.939062506$  & $1.331789564$   &     $600$ &   $1.042790358$  & $ 41.30221291$ \\

$ 150 $       &    $1.06489184$  & $71.87525566$   &     $650$ &   $1.014605804$  & $38.1800532$  \\

$ 200 $       &    $1.00000000$  & $54.09578271$   &     $700$ &   $1.027297114$  & $38.03399768$  \\

$ 250 $       &    $1.06489184$  & $64.59240343$   &     $750$ &   $1.012645994$  & $35.57956117$  \\

$ 300 $       &    $1.021166379$  & $54.03265563$   &     $800$ &   $1.015832638$  & $ 35.25149408$  \\

$ 350 $       &    $1.027297114$  & $56.39846672$   &     $850$ &   $1.018652839$  & $33.28723503$  \\

 $ 400 $       &    $1.031919949$  & $49.58316089$   &     $900$ &   $1.0070058$  & $31.4036155$  \\

$ 450 $       &    $1.0070058$  & $ 44.00842613$   &     $950$ &   $1.023420701$  & $31.27321466$  \\

$ 500 $       &    $1.038428014$  & $ 45.14322051$   &     $1000$ &   $1.012645994$  & $29.73405416$  \\
\end{tabular}
\end{table*}
On the one hand, the results of computations  placed in Table
2 do not contradict to the above mentioned fact
that $T_n$ is a consistent estimator of the parameter $\theta=1$.
On the other hand, we know that  a sample average $\overline{X}_n$
does not work in that case  because  the mean and variance of the "white noise "(i.e., Cauchy random variable)
are not defined. By this reason  attempts to estimate the "useful
signal" $\theta=1$ by using the sample average  will not be
successful.
\end{ex}
In \cite{Pan13} has  been established that
the estimators $
\overline{\lim}\widetilde{T_n} := \inf_n \sup_{m \ge
n}\widetilde{T_m}$ and $ \underline{\lim}\widetilde{T_n} := \sup_n
\inf_{m \ge n}\widetilde{T_m}$ are
consistent infinite sample estimates of a useful signal $\theta$ in the model (1.6) (see, \cite{Pan13}, Theorem 4.2, p. 483). When we begin to study properties of these infinite sample estimators from the point of view of the theory of Haar null sets in ${\bf  R}^N$, we observed a surprising and an unexpected fact for us that these  both estimates
are objective (see, \cite{Pan15}, Theorem 3.1).

As the described approach naturally divides  a class of consistent infinite sample estimates into  objective and subjective estimates
shouldn't seem excessively highly told our suggestion
that  each consistent   infinite sample estimate must pass
the theoretical test  on  the objectivity.

The present manuscript introduces the concepts of the theory of objective infinite sample  consistent estimates in ${\bf R}^N$ and  gives its extension to all non-locally-compact Polish groups admitting an invariant metric.

The rest of this note is the following.

In Section 2 we give some notions and facts from the theory of Haar null sets in complete metric linear spaces and equidistributed sequences on the real axis ${\bf R}$. Concepts of objective and strong objective infinite sample consistent  estimates for statistical structures are introduced also in this section.
Section 3 presents a certain construction of the objective infinite sample  consistent estimate of an unknown distribution function which generalises the recent results  obtained in \cite{Pan13}. There is proved  an existence of the infinite sample  consistent estimate of an unknown distribution function $F (F \in \mathcal{F})$ for the family of Borel probability measures $\{ p_F^N :  F \in \mathcal{F}\}$, where $\mathcal{F}$ denotes the family of all strictly increasing and continuous
distribution functions on ${\bf R}$ and $p_F^N$ denotes an infinite power of the  Borel probability measure $p_F$ on ${\bf R}$
defined  by $F$. Section 4 presents an effective construction of the  strong objective  infinite sample consistent estimate of the "useful
signal" in a certain linear one-dimensional stochastic model. An infinite  sample consistent estimate of an unknown probability density  is constructed for the
separated class of positive continuous probability densities and a problem about existence of an objective one is stated in Section 5.
In Section 6,  by using the notion of a Haar ambivalent set introduced in \cite{Balka12}, essentially new  classes of statistical structures having objective and strong objective estimates of an unknown parameter are introduced in a Polish
non-locally-compact group admitting an invariant metric and relations between them  are studied in this section. An example of such a weakly separated statistical structure  is constructed  for which a question asking {\it whether there exists a consistent estimate of an unknown parameter}  is not solvable within the theory $(ZF)~\&~(DC)$.  These results extend recent results obtained in \cite{Pan14-1}. In addition, we extend the concept of objective and subjective  consistent estimates introduced for ${\bf  R}^N$ to all Polish groups  and consider a question asking whether there exists an objective  consistent estimate of an unknown  parameter for any statistical structure in a non-locally compact Polish group with an invariant metric when subjective one exists. We show that this question is answered positively when there exists  at least one  such a parameter the pre-image of which under this  subjective estimate is a prevalent. In Section 7 we consider some examples  of objective and strong objective consistent estimates in a compact Polish
group $\{0; 1\}^N$.

\section{ Auxiliary notions and facts from functional analysis and measure theory}

Let ${V}$ be a
complete metric linear space, by which we mean a vector space
(real or complex) with a complete metric for which the operations
of addition and scalar multiplication are continuous. When we
speak of a measure on ${V}$ we will always mean a
nonnegative measure that is defined  on the Borel sets of
${V} $ and is not identically zero. We write $S + v$ for
the translation of a set $S \subseteq {V} $ by a vector $v
\in {V} $.

%vspace*{0.3cm}{\bf  Definition 2.1}
\begin{defn}
( \cite{HSY92}, Definition 1, p. 221) A
measure $\mu$ is said to be transverse to a Borel
 set $S \subset {V} $ if the
following two conditions hold:

(i)~There exists a compact set $U \subset {V} $ for which
$0 < \mu(U) <1$;

(ii)~ $\mu(S + v) = 0$ for every $v \in {V} $.

\end{defn}

%\vspace*{0.3cm}{\bf  Definition 2.2}
\begin{defn} (\cite{HSY92}, Definition 2, p. 222;
\cite{Balka12}, p. 1579 ) A Borel set $S \subset {V} $ is called shy
if there exists a measure transverse to $S$. More generally, a
subset of ${V} $ is called shy if it is contained in a shy
Borel set. The complement of a shy set is called a prevalent set.
 We say that a set is Haar ambivalent if
it is neither shy nor prevalent.
\end{defn}

%\vspace*{0.3cm}{\bf  Definition 2.3}
\begin{defn}(\cite{HSY92}, p. 226) We say that
''almost every'' element of ${V}$ satisfies some given
property, if the subset of ${V}$ on which this property
holds is prevalent.
\end{defn}

\begin{lem}[ \cite{HSY92}, Fact $3^{''}$,  p. 223] The union of a countable collection of shy sets is shy.~\end{lem}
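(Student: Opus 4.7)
The plan is to prove this by constructing a single transverse measure to $\bigcup_n S_n$ via infinite convolution of shrunken transverse measures. Let $\{S_n\}_{n \in \mathbb{N}}$ be shy, and for each $n$ fix a Borel probability measure $\mu_n$ transverse to $S_n$ (normalize if needed; if we only have transverseness for the Borel hull $\tilde S_n \supset S_n$, we work with $\tilde S_n$ instead).

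The first step is to reduce to measures supported in arbitrarily small neighborhoods of $0$. I would use two operations, each of which preserves transverseness:
\emph{(a) restriction:} if $\nu(S_n + v) = 0$ for all $v$ and $B$ is a Borel set with $\nu(B) > 0$, then the normalized restriction $\nu|_B/\nu(B)$ is still transverse to $S_n$, since $\nu(B \cap (S_n+v)) \le \nu(S_n+v) = 0$;
\emph{(b) translation:} convolution with a Dirac $\delta_{-x}$ preserves transverseness, because shyness is a translation-invariant notion. Using Prokhorov tightness on the Polish space $V$, pick a compact $K_n$ with $\mu_n(K_n)>0$, restrict, then translate by $-x_n$ for some $x_n \in K_n$ so that $0$ belongs to the support of the resulting measure. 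Since $0$ is in its support, restriction to $K_n \cap \overline{B(0,2^{-n})}$ still has positive mass; let $\nu_n$ be the normalization. Then $\nu_n$ is a probability measure, transverse to $S_n$, supported in $\overline{B(0,2^{-n})}$.

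The second step is to assemble these into a single measure. On a product probability space take independent $X_n \sim \nu_n$. Using invariance of the metric and the triangle inequality, $d(\sum_{k=m+1}^{n} X_k, 0) \le \sum_{k=m+1}^{n} d(X_k,0) \le \sum_{k>m} 2^{-k}$, so the partial sums form a Cauchy sequence almost surely; by completeness, $X = \sum_n X_n$ converges a.s. Let $\mu$ be the law of $X$. A Fubini computation conditioning on $(X_k)_{k \ne n}$ gives, for every $v \in V$ and every $n$,
\[
\mu(S_n + v) = \mathbb{E}\bigl[\nu_n\bigl(S_n + v - \textstyle\sum_{k \ne n} X_k\bigr)\bigr] = 0,
\]
because $\nu_n(S_n + w) = 0$ for all $w$. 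Countable subadditivity then yields $\mu\bigl((\bigcup_n S_n)+v\bigr) \le \sum_n \mu(S_n+v) = 0$, verifying condition (ii) of Definition 2.1 for the Borel set $\bigcup_n S_n$. Condition (i) is immediate: $\mu$ is not a Dirac (it is the convolution of non-degenerate factors), so by tightness one selects a compact $U$ with $0 < \mu(U) < 1$.

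The main obstacle is the size-reduction step, because scalar rescaling does \emph{not} preserve transverseness in a topological vector space (it replaces $S_n$ by $\alpha^{-1}S_n$, which is no longer a translate of $S_n$). The key insight bypassing this is that one can combine \emph{translation} and \emph{restriction}, both of which genuinely preserve the transverseness property; once the supports are forced into $\overline{B(0,2^{-n})}$, summability in the invariant metric is automatic, and convolution does the rest. The remaining check, that the set $\bigcup_n S_n$ is Borel (hence a legitimate shy Borel set rather than merely a subset of one), follows because a countable union of Borel sets is Borel.
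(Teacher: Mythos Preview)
The paper does not supply a proof of this lemma; it simply cites Fact~$3''$ of Hunt--Sauer--Yorke. Your argument is essentially the one given in that reference: shrink each transverse measure so that it is supported in a ball of radius~$2^{-n}$, form the infinite convolution, and verify transverseness of the limit via Fubini. So the approach matches the cited source, and the main line of reasoning is correct.

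Two small points are worth tightening. First, in your size-reduction step you pick $x_n\in K_n$ and assert that after translation $0$ lies in the support; for this you should choose $x_n$ in the \emph{support} of $\mu_n|_{K_n}$ (which is nonempty since $\mu_n(K_n)>0$), not merely in $K_n$. Second, your justification of condition~(i) of Definition~2.1 (``$\mu$ is not a Dirac because it is the convolution of non-degenerate factors'') deserves one more sentence: whenever $S_n\neq\emptyset$, any measure transverse to $S_n$ is automatically atomless (an atom at $a$ would give $\mu_n(S_n+(a-s))>0$ for $s\in S_n$), hence each restriction $\nu_n$ is atomless and the convolution cannot be a point mass. With these clarifications the proof is complete. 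Note also that the inequality $0<\mu(U)<1$ in the paper's Definition~2.1 is a transcription slip for the original $0<\mu(U)<\infty$ in \cite{HSY92}; under the latter, condition~(i) is immediate from tightness.
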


\begin{lem}[ \cite{HSY92}, Fact $8$,  p. 224]
If ${V}$  is infinite dimensional, all compact subsets of
${V} $ are shy.~\end{lem}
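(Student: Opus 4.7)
The plan is to construct, for any given compact set $K \subset V$, a Borel probability measure $\mu$ on $V$ that is transverse to $K$ in the sense of Definition 2.1. The essential ingredient is infinite-dimensionality, which provides enough ``room'' to find probe directions generic relative to $K$; the corresponding statement manifestly fails in finite dimensions, where closed balls carry positive Haar measure.

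Concretely, I would build a ``Hilbert-cube probe''. Using the infinite-dimensionality of $V$, inductively select linearly independent vectors $(e_{n})_{n \in N}$ in $V$ so small in the metric of $V$ that the series $\Phi((t_{n})) := \sum_{n} t_{n} e_{n}$ converges for every $(t_{n}) \in [0,1]^{N}$ and yields a continuous injection $\Phi : [0,1]^{N} \to V$ with compact image. At stage $n$, arrange in addition that $e_{n}$ lies outside the closed linear span of the previously chosen vectors together with a fixed countable dense subset of $K$; this is possible because $V$ is infinite-dimensional and any finitely generated closed subspace is proper. Define $\mu$ as the push-forward under $\Phi$ of the product Lebesgue probability on $[0,1]^{N}$. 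Then $\mu$ is a Borel probability measure supported on the compact set $\Phi([0,1]^{N}) \subset V$, which immediately verifies condition (i) of Definition 2.1.

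For condition (ii), fix $v \in V$. The goal is
\[
\mu(K + v) \;=\; \lambda^{N}\bigl(\Phi^{-1}(K + v)\bigr) \;=\; 0.
\]
Apply Fubini over the coordinates $t_{n}$: it suffices that, for $\lambda^{N}$-almost every tail $(t_{n})_{n \geq 2}$, the one-dimensional slice
\[
\bigl\{\, t_{1} \in [0,1] \,:\, t_{1} e_{1} + \textstyle\sum_{n \geq 2} t_{n} e_{n} \in K + v \bigr\}
\]
has one-dimensional Lebesgue measure zero, and analogously for slicing in every subsequent coordinate. The genericity of the $e_{n}$ ensures that no single-coordinate slice contains a nontrivial interval, and iterating the slicing argument through the whole tower of coordinates eliminates residual positive Lebesgue mass (for instance, fat-Cantor-type slices in one coordinate are wiped out by an independent direction further along the tower). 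Combined with Lemma 2.4 to absorb countable exceptional sets, this delivers $\mu(K+v)=0$ for every $v$, so $\mu$ is transverse to $K$ and $K$ is shy.

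The main obstacle is precisely the iterated-slicing estimate: making ``genericity'' of the probe directions \emph{quantitatively} strong enough to force the product measure of $\Phi^{-1}(K+v)$ to vanish for every translate $K+v$, ruling out not merely interval-containing slices but all positive-measure (e.g., fat-Cantor) slices. Handling this requires using the entire countable supply of mutually independent probe directions at once, and it is exactly here that infinite-dimensionality of $V$ is indispensable and finite-dimensional analogues collapse.
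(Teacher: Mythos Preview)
The paper does not prove this lemma; it merely cites Fact~8 of Hunt--Sauer--Yorke. The argument there is far simpler than your Hilbert-cube construction and uses only a one-dimensional probe: since $V$ is an infinite-dimensional complete metric linear space it is not $\sigma$-compact (Baire category), so the $\sigma$-compact set ${\bf R}\cdot(K-K)=\{t(x-y):t\in{\bf R},\ x,y\in K\}$ is a proper subset of $V$. Pick any $v\notin{\bf R}\cdot(K-K)$; then every affine line in direction $v$ meets $K$ in at most one point, and Lebesgue measure on the segment $\{tv:t\in[0,1]\}$ is transverse to $K$.

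Your proposal, by contrast, has two genuine gaps. First, the inductive choice of $e_{n}$ ``outside the closed linear span of the previously chosen vectors together with a fixed countable dense subset of $K$'' can fail outright: your justification is that ``any finitely generated closed subspace is proper,'' but the closed span of a dense subset of $K$ is not finitely generated, and if $K$ itself has dense linear span in $V$ (take $K$ to be a Hilbert cube whose coordinate vectors have dense span) that closed span is already all of $V$, so no such $e_{n}$ exists. Second --- and you concede this in your final paragraph --- the slicing argument is not rigorous: knowing that a one-dimensional slice ``contains no nontrivial interval'' does not make it Lebesgue-null (fat Cantor sets), and the appeal to ``iterating through the tower of coordinates'' and ``genericity'' is a hope, not an estimate. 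The one-line HSY argument sidesteps all of this by choosing a single direction $v$ so that every slice is at most a singleton, which is trivially null.
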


\begin{lem}[\cite{Khar84}, Lemma 2,  p. 58] Let $\mu$ be a Borel
 probability measure defined in complete separable metric space ${V} $.
 Then there exists a countable family
 of compact sets $(F_k)_{k \in N}$ in ${V} $ such that
 $\mu({V} \setminus \cup_{k \in N}F_k)=0.$
\end{lem}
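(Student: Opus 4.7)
The lemma is Ulam's tightness theorem for Borel probability measures on Polish spaces, so the plan is to produce, for each $\varepsilon>0$, a compact set $K_\varepsilon\subseteq V$ with $\mu(V\setminus K_\varepsilon)<\varepsilon$, and then assemble the desired countable family by letting $\varepsilon$ run through a sequence tending to $0$.

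First I would use separability of $V$ to fix, for each integer $n\geq 1$, a countable dense sequence $\{x_{n,k}\}_{k\in N}$ and note that the open balls $B(x_{n,k},1/n)$ cover $V$. Since $\mu$ is a probability measure, continuity of $\mu$ from below gives, for any prescribed $\varepsilon>0$ and each $n$, a finite index $M_n=M_n(\varepsilon)$ with
\begin{equation*}
\mu\Bigl(\,V\setminus \bigcup_{k=1}^{M_n} B(x_{n,k},1/n)\Bigr)<\frac{\varepsilon}{2^n}.
\end{equation*}
Setting
\begin{equation*}
K_\varepsilon \;=\; \bigcap_{n=1}^{\infty}\,\bigcup_{k=1}^{M_n}\overline{B}(x_{n,k},1/n),
\end{equation*}
subadditivity yields $\mu(V\setminus K_\varepsilon)<\sum_{n\geq 1}\varepsilon/2^n=\varepsilon$.

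Next I would verify that $K_\varepsilon$ is compact. By construction $K_\varepsilon$ is closed in $V$, and for every $n$ it is covered by finitely many closed balls of radius $1/n$, hence totally bounded. Since $V$ is a complete metric space, every closed totally bounded subset is complete and totally bounded, and therefore compact. This is the one step where the completeness hypothesis is actually used, and it is the only substantive point of the argument; everything else is bookkeeping.

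Finally, to pass from ``for each $\varepsilon$'' to a single countable family, I would simply set $F_m=K_{1/m}$ for $m\in N$. Then each $F_m$ is compact and
\begin{equation*}
\mu\Bigl(V\setminus \bigcup_{m\in N}F_m\Bigr)\;\leq\;\mu(V\setminus F_m)\;<\;\frac{1}{m}
\end{equation*}
for every $m$, whence $\mu(V\setminus\bigcup_{m\in N}F_m)=0$, as required. I expect no genuine obstacle: the only place one must be careful is to take \emph{closed} balls in the definition of $K_\varepsilon$ (so that $K_\varepsilon$ is closed) and to invoke completeness of $V$ to upgrade ``closed and totally bounded'' to ``compact''.
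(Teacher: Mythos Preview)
Your argument is correct and is precisely the classical Ulam tightness proof. Note, however, that the paper does not supply its own proof of this lemma: it is quoted verbatim from \cite{Khar84} (Lemma~2, p.~58) and used as a black box, so there is no in-paper argument to compare against. Your write-up is the standard proof one finds in that reference and in most measure-theory texts, so in effect you have reproduced the cited proof rather than offered an alternative route.
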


Let ${\bf  R}^N$  be a  topological vector space of all real
valued sequences  equipped with Tychonoff
metric $\rho$ defined by $ \rho((x_k)_{k \in N}, (y_k)_{k
\in N})=\sum_{k \in N}|x_k-y_k|/2^k(1+|x_k-y_k|)
$ for $(x_k)_{k \in N}, (y_k)_{k \in N} \in
{\bf  R}^N$.

%\vspace*{0.3cm}{\bf  Lemma 2.1}
\begin{lem}( \cite{Pan07}, Lemma 15.1.3, p. 202  ) Let $J$
be an arbitrary   subset of $N$. We set
\begin{equation}
A_J=\{(x_i)_{i \in N}: x_i \le 0~\mbox{for}~i \in
J~\&~x_i > 0~\mbox{for}~i \in N \setminus J\}.\label{ccs}
\end{equation}

Then the family of subsets ~$\Phi=\{A_J:J \subseteq N \}$
has the following properties:

$(i)$ every element of $\Phi$  is Haar ambivalent.

$(ii)$ ~$A_{J_1} \cap A_{J_2}=\emptyset$ for all different
$J_1,J_2 \subseteq N$.

$(iii)$ ~$\Phi$ is a partition of ${\bf  R}^N$
 such that $\mbox{card}(\Phi)=2^{\aleph_0}$.
\end{lem}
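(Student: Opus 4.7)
Parts (ii) and (iii) follow immediately from the definition: for any $x = (x_i)_{i \in N} \in {\bf R}^N$, the set $J_x := \{i \in N : x_i \le 0\}$ is the unique $J \subseteq N$ with $x \in A_J$, which gives both the disjointness of distinct elements of $\Phi$ and that $\Phi$ covers ${\bf R}^N$; since each $A_J$ is nonempty (take $x_i = -1$ for $i \in J$ and $x_i = 1$ otherwise), the map $J \mapsto A_J$ is a bijection onto $\Phi$, so $\mbox{card}(\Phi) = |\mathcal{P}(N)| = 2^{\aleph_0}$.

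For (i), the engine of the argument is the coordinate-wise boundedness of compact sets under the Tychonoff metric: for each compact $U \subseteq {\bf R}^N$, continuity of every projection $\pi_i$ makes $\pi_i(U)$ a compact subset of ${\bf R}$, so there exist reals $b_i > 0$ with $|x_i| \le b_i$ for all $x \in U$ and $i \in N$. Suppose $\mu$ is a Borel measure and $U$ a compact set satisfying condition (i) of the transversality definition, and choose such $(b_i)$. I will exhibit, for each $J \subseteq N$, two translation vectors $v^+, v^- \in {\bf R}^N$ that prevent $\mu$ from being transverse to $A_J$ and to ${\bf R}^N \setminus A_J$ respectively.

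Set $v^+_i = b_i$ for $i \in J$ and $v^+_i = -b_i - 1$ for $i \in N \setminus J$. A coordinate check gives $(x - v^+)_i = x_i - b_i \le 0$ when $i \in J$ and $(x - v^+)_i = x_i + b_i + 1 \ge 1 > 0$ when $i \notin J$, for every $x \in U$; hence $U \subseteq A_J + v^+$ and $\mu(A_J + v^+) \ge \mu(U) > 0$, so $A_J$ is not shy. Now set $v^-_i = -b_i - 1$ for $i \in J$ and $v^-_i = b_i + 1$ for $i \in N \setminus J$. At every $i \in N$ the coordinate $(x - v^-)_i$ violates the defining inequality of $A_J$ at $i$ (it is $\ge 1$ when the condition demands $\le 0$, or $\le -1$ when the condition demands $> 0$), so $x - v^- \notin A_J$ for every $x \in U$; hence $U \subseteq ({\bf R}^N \setminus A_J) + v^-$ and $\mu(({\bf R}^N \setminus A_J) + v^-) \ge \mu(U) > 0$, so ${\bf R}^N \setminus A_J$ is not shy either, meaning $A_J$ is not prevalent. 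No real obstacle is anticipated: once the coordinate bounds on $U$ are in hand, the whole proof is just a careful choice of signs for $v^\pm$ that respects the strict versus non-strict inequalities in the definition of $A_J$.
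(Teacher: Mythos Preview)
Your proof is correct and follows essentially the same approach as the paper's: Remark~2.8 states that the proof of this lemma rests on the fact that any Borel subset of ${\bf R}^N$ which contains a translate of every compact set is non-shy, and your coordinate-bounding argument with the explicit vectors $v^+$ and $v^-$ is precisely an implementation of this principle applied to both $A_J$ and its complement.
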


%\vspace*{0.3cm}{\bf  Remark 2.1}

\begin{rem}
The  proof of the Lemma 2.7 employs an argument stated that each Borel subset of  ${\bf  R}^N$ which for each compact set contains it's any translate is non-shy set.
\end{rem}

%\vspace*{0.3cm}{\bf  Definition 2.4}

%\vspace*{0.3cm}{\bf  Definition 2.4}

%\vspace{.08in} \noindent{\bf  Definition 2.1

\begin{defn}(\cite{KuNi74}) A sequence $(x_k)_{k \in N}$ of real numbers from the interval
$(a, b)$ is said to be equidistributed or uniformly distributed on
an interval $(a, b)$ if for any subinterval $[c, d]$ of  $(a, b)$
we have
\begin{equation}
\lim_{n \to \infty}
n^{-1}\#(\{x_1, x_2, \cdots, x_n\} \cap [c,d])=(b-a)^{-1}(d-c),\label{ccs}
\end{equation} where $\#$ denotes a counting measure.
\end{defn}

Now let $X$ be a compact Polish space and $\mu$ be a probability
Borel measure on $X$. Let $\mathcal{R}(X)$ be a space of all
bounded continuous functions defined on  $X$.

%\vspace{.08in} \noindent {\bf  Definition 2.2}
\begin{defn}A sequence $(x_k)_{k \in N}$ of elements of $X$ is said to be $\mu$-equidistri- \newline
butted or $\mu$-uniformly distributed on the $X$
 if for every $f \in \mathcal{R}(X)$ we have
\begin{equation}
\lim_{n \to \infty}n^{-1}\sum_{k=1}^nf(x_k) =\int_{X}fd\mu.\label{ccs}
\end{equation}
\end{defn}

 %\noindent{\bf  Lemma 2.2}
 \begin{lem} (\cite{KuNi74}, Lemma 2.1, p.
199) Let  $f \in \mathcal{R}(X)$. Then, for  $\mu^N$-almost
 every sequences $(x_k)_{k \in N} \in X^{N}$,  we have
\begin{equation}
\lim_{n \to \infty} n^{-1}\sum_{k=1}^n f(x_k)=\int_{X}f d \mu.  \label{ccs}
\end{equation}
\end{lem}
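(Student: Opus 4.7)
The plan is to recognize the claim as a Strong Law of Large Numbers for the i.i.d.\ sequence $(f(x_k))_{k \in N}$ under the product measure $\mu^N$, and to prove it by a fourth-moment plus Borel--Cantelli argument that avoids invoking the general Kolmogorov SLLN.

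First I would reduce to the centered case. Setting $g = f - \int_X f\, d\mu$, we still have $g \in \mathcal{R}(X)$ and $\int_X g\, d\mu = 0$; it therefore suffices to show that
$$\lim_{n \to \infty} n^{-1}\sum_{k=1}^n g(x_k) = 0$$
for $\mu^N$-almost every $(x_k)_{k \in N} \in X^N$. Because $X$ is compact and $g$ is continuous, $M := \sup_{x \in X}|g(x)| < \infty$, so all moments of $g$ under $\mu$ are finite.

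Next, writing $S_n((x_k)_{k \in N}) = \sum_{k=1}^n g(x_k)$, I would expand
$$\int_{X^N} S_n^4\, d\mu^N = \sum_{i,j,k,\ell=1}^n \int_{X^N} g(x_i)g(x_j)g(x_k)g(x_\ell)\, d\mu^N,$$
and observe that, thanks to the independence of coordinates under $\mu^N$ and the vanishing mean of $g$, every term containing an index that appears exactly once contributes $0$. Only index patterns of type $(iiii)$ or $(iijj)$ with $i \neq j$ survive; there are $n$ of the first kind and $3 n(n-1)$ of the second, each integral bounded by $M^4$, hence $\int_{X^N} S_n^4\, d\mu^N \le 3 M^4 n^2 = O(n^2)$.

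Finally, for any fixed $\varepsilon > 0$, Markov's inequality yields
$$\mu^N\bigl(\{(x_k)_{k \in N} \in X^N : |n^{-1}S_n| > \varepsilon\}\bigr) \le \frac{\int_{X^N} S_n^4\, d\mu^N}{\varepsilon^4 n^4} = O(n^{-2}),$$
so the sum over $n$ is finite and Borel--Cantelli (Lemma 2.4 applied to countable unions, or its classical probabilistic form) gives $\mu^N$-almost sure eventual domination by $\varepsilon$. Intersecting the resulting full-measure sets over $\varepsilon = 1/m$, $m \in N$, yields $n^{-1}S_n \to 0$ almost surely and completes the reduction. I expect no genuine obstacle: the only bookkeeping is the combinatorial count of surviving fourth-moment index patterns, which is entirely routine, and compactness of $X$ furnishes all of the moment bounds needed without any integrability hypothesis on $f$ beyond continuity.
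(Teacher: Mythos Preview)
Your argument is correct: the fourth-moment expansion together with Markov's inequality and Borel--Cantelli yields almost-sure convergence of the Ces\`aro means, and compactness of $X$ supplies the uniform bound $M$ that makes all moments finite. The combinatorial count $n+3n(n-1)\le 3n^2$ is accurate, and the intersection over $\varepsilon=1/m$ is the standard way to pass from ``eventually $\le\varepsilon$'' to ``$\to 0$''.

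As for comparison with the paper: the paper does not supply its own proof of this lemma but simply cites Kuipers--Niederreiter \cite{KuNi74}, Lemma~2.1, p.~199. The argument given there is in fact precisely the fourth-moment/Borel--Cantelli computation you have written, so your approach coincides with the cited source rather than differing from it. One small slip: your parenthetical reference to ``Lemma~2.4'' of the present paper is misplaced, since that lemma concerns countable unions of shy sets, not the Borel--Cantelli lemma; you correctly invoke the classical probabilistic form, so this is only a labeling error.
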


 %\noindent{\bf  Lemma 2.3}

 \begin{lem} (\cite{KuNi74}, pp. 199-201)
Let $S$ be a set of all $\mu$-equidistributed sequences on $X$.
Then we have $\mu^{N}(S)=1$.
\end{lem}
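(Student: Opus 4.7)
The plan is to reduce the statement of Lemma 2.12 from the uncountable family $\mathcal{R}(X)$ of all bounded continuous functions on $X$ to a countable dense subfamily, then combine Lemma 2.11 with a three-epsilon argument. The whole point is that the definition of $\mu$-equidistribution requires convergence to hold \emph{simultaneously} for all $f \in \mathcal{R}(X)$, whereas Lemma 2.11 only gives the conclusion for one $f$ at a time.

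First, I would invoke the fact that $X$ is a compact Polish space to conclude that $\mathcal{R}(X) = C(X)$, equipped with the supremum norm, is a separable Banach space. Thus one can fix a countable dense subset $\{f_n\}_{n \in N} \subset \mathcal{R}(X)$. For each fixed $n$, define
\begin{equation}
S_n = \Bigl\{ (x_k)_{k \in N} \in X^N : \lim_{m \to \infty} m^{-1}\sum_{k=1}^m f_n(x_k) = \int_X f_n\, d\mu \Bigr\}.
\end{equation}
By Lemma 2.11, $\mu^N(S_n) = 1$ for every $n$. Setting $S' = \bigcap_{n \in N} S_n$, a countable intersection of full-measure sets has full measure, so $\mu^N(S') = 1$.

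Next, I would verify the inclusion $S' \subseteq S$. Fix an arbitrary $(x_k)_{k \in N} \in S'$ and an arbitrary $f \in \mathcal{R}(X)$. Given $\varepsilon > 0$, choose $f_n$ with $\|f - f_n\|_\infty < \varepsilon$. For every $m$,
\begin{equation}
\Bigl| m^{-1}\sum_{k=1}^m f(x_k) - \int_X f\, d\mu \Bigr| \le \|f - f_n\|_\infty + \Bigl| m^{-1}\sum_{k=1}^m f_n(x_k) - \int_X f_n\, d\mu \Bigr| + \|f - f_n\|_\infty.
\end{equation}
Taking $\limsup_{m \to \infty}$ and using that $(x_k) \in S_n$, the middle term vanishes, leaving a bound of $2\varepsilon$. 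Since $\varepsilon$ was arbitrary, the equidistribution condition holds for $f$. Since $f$ was arbitrary in $\mathcal{R}(X)$, we obtain $(x_k)_{k \in N} \in S$. Therefore $S \supseteq S'$ and consequently $\mu^N(S) \geq \mu^N(S') = 1$, which forces $\mu^N(S) = 1$.

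The only step that is not completely routine is the measurability issue: for the inequality $\mu^N(S) \geq \mu^N(S')$ to make sense one needs $S$ to be $\mu^N$-measurable, or at least to contain the measurable full-measure set $S'$. The inclusion $S' \subseteq S$ handles this directly without having to analyze the Borel structure of $S$ itself, which is the reason I would organize the argument around the containment rather than around identities of sets. Separability of $\mathcal{R}(X)$ is the genuine input and is the one place where compactness of $X$ is used essentially.
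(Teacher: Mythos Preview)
Your proof is correct and follows exactly the argument given in the cited reference \cite{KuNi74}, pp.~199--201: the paper itself provides no proof beyond that citation, and the standard argument there proceeds precisely by choosing a countable dense subset of $C(X)$ (using that $X$ is compact metric), applying Lemma~2.11 to each element, and passing to the countable intersection via the density/three-epsilon estimate you wrote down. One minor sharpening: since trivially $S \subseteq S_n$ for every $n$, you in fact have $S = S'$, so $S$ is Borel and the measurability concern in your last paragraph dissolves entirely.
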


 %\noindent{\bf  Corollary  2.1}~{\it
 \begin{cor}(\cite{Pan13}, Corollary 2.3, p. 473)
 Let $\ell_1$ be a Lebesgue measure on $(0,1)$.
 Let $D$ be a set of all $\ell_1$-equidistributed sequences on
$(0,1)$. Then we have ~$\ell_1^{N}(D)=1$.
\end{cor}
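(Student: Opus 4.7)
The plan is to deduce Corollary 2.3 from Lemma 2.12 by passing to the compactification $[0,1]$ of the open interval, since Lemma 2.12 requires a compact Polish space while $(0,1)$ is not compact.

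First I would let $X = [0,1]$, regarded as a compact Polish space, and equip it with the Borel probability measure $\mu$ obtained by extending $\ell_1$ on $(0,1)$ via $\mu(\{0\}) = \mu(\{1\}) = 0$. Applying Lemma 2.12 to $(X,\mu)$, the set
\begin{equation}
S = \Bigl\{(x_k)_{k \in N} \in X^N : \lim_{n \to \infty} n^{-1}\sum_{k=1}^n f(x_k) = \int_X f\, d\mu \text{ for every } f \in \mathcal{R}(X)\Bigr\}
\end{equation}
satisfies $\mu^N(S) = 1$. Next I would observe that the set $E = \{(x_k)_{k \in N} \in X^N : x_k \in (0,1) \text{ for all } k\}$ has $\mu^N(E) = 1$, since each coordinate slice $\{x_k \in \{0,1\}\}$ is a $\mu$-null set and the countable union of such slices is still null. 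Consequently $\mu^N(S \cap E) = 1$, and because $\mu^N$ restricted to $(0,1)^N$ coincides with $\ell_1^N$, this gives $\ell_1^N(S \cap E) = 1$ inside the product space $(0,1)^N$.

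The remaining step is to show that every sequence in $S \cap E$ is $\ell_1$-equidistributed on $(0,1)$ in the sense of Definition 2.9, i.e.\ that
\begin{equation}
\lim_{n \to \infty} n^{-1} \#(\{x_1,\ldots,x_n\} \cap [c,d]) = d - c
\end{equation}
for every $[c,d] \subseteq (0,1)$. This is the classical direction of Weyl's equidistribution criterion: approximate the indicator function $\chi_{[c,d]}$ from above and below by continuous functions $g_\varepsilon^{\pm} \in \mathcal{R}([0,1])$ with $g_\varepsilon^{-} \le \chi_{[c,d]} \le g_\varepsilon^{+}$ and $\int (g_\varepsilon^{+} - g_\varepsilon^{-})\,d\mu < \varepsilon$. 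Applying the defining limit of $S$ to $g_\varepsilon^{\pm}$ sandwiches $n^{-1}\#(\{x_1,\ldots,x_n\} \cap [c,d])$ between quantities that tend to values within $\varepsilon$ of $d-c$, and letting $\varepsilon \to 0$ yields the claim.

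The only genuine step of content is the sandwich argument in the last paragraph; everything else is bookkeeping to move between $[0,1]$ and $(0,1)$. I do not expect any serious obstacle, since the null-set manipulation is routine and the continuous-function-to-interval passage is a textbook fact included, e.g., in Kuipers--Niederreiter. The main point of writing it up is simply to confirm that Lemma 2.12, stated only for compact Polish spaces, still delivers the stated corollary on the non-compact interval $(0,1)$.
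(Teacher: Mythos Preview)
Your argument is correct. The paper does not give its own proof of this corollary: it is simply cited from \cite{Pan13} and placed immediately after Lemma 2.12, with the implicit understanding that it is a direct specialization of that lemma to $X=[0,1]$ with Lebesgue measure. Your write-up supplies exactly the two technical details the paper suppresses --- passing from the non-compact interval $(0,1)$ to the compact $[0,1]$ via a null-set adjustment, and passing from the continuous-function formulation of equidistribution (Definition 2.10) to the subinterval formulation (Definition 2.9) via the standard Weyl sandwich. So your approach is the same as the paper's intended one, just made explicit.
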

\medskip

%\vspace{.08in} \noindent {\bf  Definition 2.3}

\begin{defn} Let $\mu$ be a
probability Borel measure on $R$ and $F$ be its corresponding
distribution function. A sequence $(x_k)_{k \in N}$ of elements of
$R$ is said to be $\mu$-equidistributed or $\mu$-uniformly
distributed on $R$ if for every interval $[a, b] (-\infty \le a <
b \le +\infty)$ we have
\begin{equation}
\lim_{n \to \infty} n^{-1} \# ([a,b] \cap \{x_1, \cdots, x_n \}
)=F(b)-F(a).\label{ccs}
\end{equation}
\end{defn}

%\noindent{\bf  Lemma 2.4}{\it ~
\begin{lem} (\cite{Pan13}, Lemma 2.4, p. 473)
 Let $(x_k)_{k \in N}$ be
$\ell_1$-equidistributed sequence on $(0,1)$, $F$ be a strictly
increasing  continuous distribution function on $R$ and $p$ be a
Borel probability measure on $R$ defined by $F$. Then
$(F^{-1}(x_k))_{k \in N}$ is $p$-equidistributed on $R$.
\end{lem}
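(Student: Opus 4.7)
The plan is to reduce the $p$-equidistribution condition for $(F^{-1}(x_k))_{k\in N}$ directly to the $\ell_1$-equidistribution condition for $(x_k)_{k\in N}$ via the change of variables induced by $F$.

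First I would fix an arbitrary interval $[a,b]$ with $-\infty \le a < b \le +\infty$ and verify the crucial set-theoretic identity
\[
\{\,k : F^{-1}(x_k) \in [a,b]\,\} \;=\; \{\,k : x_k \in [F(a), F(b)] \cap (0,1)\,\}.
\]
This is where the hypotheses on $F$ do all the work: strict monotonicity gives injectivity of $F^{-1}$, and continuity together with strict monotonicity makes $F$ a homeomorphism from $R$ onto $(0,1)$ so that the equivalence $a \le F^{-1}(x) \le b \Longleftrightarrow F(a) \le x \le F(b)$ is legitimate. At this stage I would separately address the cases $a=-\infty$ (interpreting $F(-\infty)=0$) and $b=+\infty$ (interpreting $F(+\infty)=1$), which present no real difficulty because $x_k \in (0,1)$ automatically excludes the endpoints $0$ and $1$.

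Next I would invoke Definition 2.9 applied to the subinterval $[F(a),F(b)] \subseteq (0,1)$ (or $[F(a),1)$, $(0,F(b)]$ in the infinite endpoint cases, noting again that the presence or absence of the endpoints $0,1$ is irrelevant since $(x_k) \subset (0,1)$). Since $(x_k)_{k\in N}$ is $\ell_1$-equidistributed on $(0,1)$,
\[
\lim_{n\to\infty} n^{-1}\,\#\bigl([F(a),F(b)] \cap \{x_1,\dots,x_n\}\bigr) \;=\; F(b)-F(a).
\]
Combining this with the identity above yields
\[
\lim_{n\to\infty} n^{-1}\,\#\bigl([a,b] \cap \{F^{-1}(x_1),\dots,F^{-1}(x_n)\}\bigr) \;=\; F(b)-F(a),
\]
which is precisely the $p$-equidistribution condition of Definition 2.13.

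There is no real obstacle here; the whole content of the lemma is the observation that $F$ transports the uniform measure on $(0,1)$ to $p$ and simultaneously transports the interval $[a,b]$ to $[F(a),F(b)]$, so the counting identity is automatic. The only mild bookkeeping point is handling half-infinite intervals, and this is taken care of by the conventions $F(-\infty)=0$, $F(+\infty)=1$ together with the fact that $\ell_1([F(a),F(b)]) = F(b)-F(a)$ regardless of whether the endpoints $0$ or $1$ are included.
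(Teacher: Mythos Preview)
Your argument is correct and is the standard one: the monotone homeomorphism $F:R\to(0,1)$ converts the counting condition for $(F^{-1}(x_k))$ on $[a,b]$ into the counting condition for $(x_k)$ on $[F(a),F(b)]$, and $\ell_1$-equidistribution supplies the limit $F(b)-F(a)$. The paper itself does not prove this lemma but simply cites it from \cite{Pan13}, so there is no in-paper proof to compare against; your approach is exactly what one expects that reference to contain.

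One minor remark on rigor: Definition~2.9 literally speaks of closed subintervals $[c,d]\subset(0,1)$, so the half-infinite cases $a=-\infty$ or $b=+\infty$ (where $F(a)=0$ or $F(b)=1$) are not covered verbatim. You acknowledge this, but the cleanest way to dispatch it is to note that for any $\epsilon>0$ the limit holds for $[\epsilon,F(b)]$ and for $[F(b),1-\epsilon]$, and since the total count is $n$ one squeezes to obtain the limit for $(0,F(b)]$. This is routine, and your remark that the endpoints are never attained is the right observation behind it.
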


%\noindent{\bf  Corollary  2.2}~{\it
\begin{cor}(\cite{Pan13}, Corollary 2.4, p. 473)
Let $F$ be a strictly
increasing continuous distribution  function on $R$ and $p_{F}$ be a
Borel probability measure on $R$ defined by $F$. Then for a set
$D_{F} \subset R^{N}$ of all $p$-equidistributed sequences  on $R$
we have :

(i) $D_{F}= \{ (F^{-1}(x_k))_{k \in N} : (x_k)_{k \in N} \in D
\}$;

(ii)~$p_{F}^N(D_{F})=1$.
\end{cor}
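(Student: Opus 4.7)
The plan is to reduce the claim to the two previous results in the section: Lemma 2.15 (which states that $F^{-1}$ carries $\ell_1$-equidistributed sequences on $(0,1)$ to $p_F$-equidistributed sequences on ${\bf R}$) and Corollary 2.13 (which states $\ell_1^N(D)=1$). The key observation is that, since $F$ is strictly increasing and continuous with range $(0,1)$, the map $\Phi:(0,1)^N\to {\bf R}^N$ defined coordinatewise by $\Phi((x_k)_{k\in N})=(F^{-1}(x_k))_{k\in N}$ is a Borel isomorphism, whose inverse is $\Psi((y_k)_{k\in N})=(F(y_k))_{k\in N}$.

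For part (i), I would prove both inclusions. The inclusion $\{(F^{-1}(x_k))_{k\in N} : (x_k)_{k\in N}\in D\}\subseteq D_F$ is immediate from Lemma 2.15. For the reverse inclusion, take any $(y_k)_{k\in N}\in D_F$ and set $x_k:=F(y_k)\in(0,1)$, so that $y_k=F^{-1}(x_k)$. To verify that $(x_k)_{k\in N}\in D$, fix $[c,d]\subset(0,1)$ and exploit the strict monotonicity of $F$ to write
\[
\#\bigl([c,d]\cap\{x_1,\dots,x_n\}\bigr)=\#\bigl([F^{-1}(c),F^{-1}(d)]\cap\{y_1,\dots,y_n\}\bigr).
\]
Dividing by $n$ and letting $n\to\infty$, the $p_F$-equidistribution of $(y_k)$ together with Definition 2.14 gives the limit $F(F^{-1}(d))-F(F^{-1}(c))=d-c$, which is exactly the Lebesgue measure of $[c,d]$. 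Hence $(x_k)_{k\in N}\in D$ and $(y_k)_{k\in N}$ lies in the right-hand side of (i).

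For part (ii), I would invoke the change-of-variables formula for the product measure under $\Phi$. Since $F^{-1}$ pushes $\ell_1$ forward to $p_F$ (the classical inverse-CDF transform), and products of pushforwards agree with pushforwards of products, one has $\Phi_\ast \ell_1^N = p_F^N$. Combined with the equality $\Phi^{-1}(D_F)=D$ coming from part (i) (bijectivity of $\Phi$ is essential here), this yields
\[
p_F^N(D_F)=\ell_1^N\bigl(\Phi^{-1}(D_F)\bigr)=\ell_1^N(D)=1,
\]
where the last equality is Corollary 2.13.

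I do not expect any serious obstacle: the statement is a routine transport-of-structure consequence of the inverse-CDF construction. The only point demanding a little care is the verification that the componentwise map $\Phi$ is genuinely a bijection between $(0,1)^N$ and ${\bf R}^N$ and that its inverse is Borel; both follow from the fact that a strictly increasing continuous distribution function on ${\bf R}$ is a homeomorphism onto $(0,1)$.
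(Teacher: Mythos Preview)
Your argument is correct. The paper does not supply its own proof of this corollary---it is quoted from \cite{Pan13} without argument---but since it is placed immediately after Lemma~2.15 and Corollary~2.13, the intended derivation is exactly the one you give: the forward inclusion in (i) is Lemma~2.15, the reverse inclusion follows by applying $F$ coordinatewise and using strict monotonicity, and (ii) is the pushforward identity $\Phi_\ast\ell_1^N=p_F^N$ combined with Corollary~2.13. Your proof matches the expected route and needs no correction.
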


\begin{lem}
 Let $F_1$  and $F_2$ be two different strictly
increasing continuous distribution  functions on $R$ and $p_1$ and $p_2$ be
Borel probability measures on $R$ defined by $F_1$ and $F_2$, respectively.  Then there does not exist
a sequence of real numbers $(x_k)_{k \in \mathbf{N}}$ which simultaneously is $p_1$-equidistributed and $p_2$-equidistributed.
\end{lem}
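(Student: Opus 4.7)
The plan is to argue directly from Definition 2.14, using a single interval on which $F_1$ and $F_2$ disagree.

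First I would observe that since $F_1$ and $F_2$ are distinct functions on $\mathbb{R}$, there exists some point $x_0 \in \mathbb{R}$ with $F_1(x_0) \neq F_2(x_0)$. Both distribution functions satisfy $F_j(-\infty) = 0$, so applying Definition 2.14 to the sequence $(x_k)_{k \in \mathbb{N}}$ with the interval $[a,b] = (-\infty, x_0]$ yields, under the hypothesis that $(x_k)$ is simultaneously $p_1$- and $p_2$-equidistributed,
\begin{equation*}
\lim_{n \to \infty} n^{-1}\#\bigl((-\infty, x_0] \cap \{x_1,\ldots,x_n\}\bigr) = F_1(x_0) - 0 = F_1(x_0)
\end{equation*}
and simultaneously
\begin{equation*}
\lim_{n \to \infty} n^{-1}\#\bigl((-\infty, x_0] \cap \{x_1,\ldots,x_n\}\bigr) = F_2(x_0) - 0 = F_2(x_0).
\end{equation*}
Since a convergent numerical sequence has a unique limit, this forces $F_1(x_0) = F_2(x_0)$, contradicting the choice of $x_0$.

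The only step that requires a small amount of care is the use of the endpoint $-\infty$, which is explicitly permitted in Definition 2.14 (the constraint reads $-\infty \le a < b \le +\infty$) together with the evaluation $F_j(-\infty) = 0$ coming from the fact that $F_j$ is the distribution function of a Borel probability measure on $\mathbb{R}$. Hence there is essentially no obstacle: the lemma is a direct consequence of the definition of $\mu$-equidistribution applied on a half-line where the two distribution functions are seen to separate.
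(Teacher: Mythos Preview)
The proposal is correct and follows essentially the same approach as the paper: both pick a point $x_0$ where $F_1(x_0)\neq F_2(x_0)$, apply the equidistribution definition on the half-line $(-\infty,x_0]$, and conclude $F_1(x_0)=F_2(x_0)$ by uniqueness of limits. Your version is slightly more explicit about why the endpoint $-\infty$ is admissible, but the argument is the same.
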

\begin{proof}
Assume the contrary and let $(x_k)_{k \in \mathbf{N}}$ be such a sequence. Since
$F_1$  and $F_2$  are different there is a point $x_0 \in {R}$ such that $F_1(x_0)\neq F_2(x_0)$. The latter relation is not possible under our assumption, because
$(x_k)_{k \in \mathbf{N}}$ simultaneously is $p_1$-equidistributed and $p_2$-equidistributed, which implies
\begin{equation}
F_1(x_0)=\lim_{n \to \infty}n^{-1}\# ((-\infty,x_0] \cap \{x_1, \cdots,
x_n \} )=F_2(x_0).\label{ccs}
\end{equation}
\end{proof}

\begin{thm}
 Let $F_1$  and $F_2$ be two different strictly
increasing continuous distribution  functions on $R$ and $p_1$ and $p_2$ be
Borel probability measures on $R$ defined by $F_1$ and $F_2$, respectively.  Then the measures $p^N_1$ and $p^N_2$  are orthogonal.
\end{thm}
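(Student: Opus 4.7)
The plan is to exhibit disjoint Borel sets in ${\bf R}^N$ carrying full mass for $p_1^N$ and $p_2^N$ respectively; this is the definition of orthogonality (mutual singularity). The natural candidates are provided by the theory of equidistributed sequences developed in Section~2.

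First, for $i=1,2$, I would let $D_{F_i}\subset {\bf R}^N$ denote the set of all $p_i$-equidistributed sequences on ${\bf R}$ in the sense of Definition~2.13. By Corollary~2.15 we have
\begin{equation}
p_i^N(D_{F_i}) = 1\qquad(i=1,2).
\end{equation}
Note that $D_{F_i}$ is a Borel subset of ${\bf R}^N$, since membership in $D_{F_i}$ is a countable intersection (over rational $a<b$, say, using monotonicity and continuity of $F_i$ to reduce to a countable family of intervals) of conditions that are each defined by a limit of Borel functions of the coordinates.

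Second, I would invoke Lemma~2.16 to conclude that $D_{F_1}\cap D_{F_2}=\emptyset$: no single sequence of real numbers can be simultaneously $p_1$-equidistributed and $p_2$-equidistributed when $F_1\neq F_2$. Consequently $D_{F_2}\subseteq {\bf R}^N\setminus D_{F_1}$, so
\begin{equation}
p_1^N(D_{F_2}) \le p_1^N({\bf R}^N\setminus D_{F_1})=0,
\end{equation}
and symmetrically $p_2^N(D_{F_1})=0$. Taking the Borel set $A:=D_{F_1}$, we obtain $p_1^N(A)=1$ and $p_2^N(A)=0$, which is precisely the statement that $p_1^N\perp p_2^N$.

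There is no serious obstacle: the whole argument is a transparent assembly of Corollary~2.15 and Lemma~2.16. The only minor bookkeeping point worth noting is the Borel measurability of $D_{F_i}$, which one settles by writing the equidistribution condition as a countable conjunction of almost-sure limit conditions involving the coordinate projections, using the fact that $F_i$ is continuous so that the values $F_i(b)-F_i(a)$ are determined by $a,b$ ranging over a countable dense set.
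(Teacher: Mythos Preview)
Your proof is correct and follows essentially the same route as the paper: define $D_{F_i}$ as the set of $p_i$-equidistributed sequences, invoke the corollary giving $p_i^N(D_{F_i})=1$, invoke the lemma giving $D_{F_1}\cap D_{F_2}=\emptyset$, and conclude orthogonality. Your additional remark on the Borel measurability of $D_{F_i}$ is a useful clarification that the paper leaves implicit. One minor bookkeeping point: your cross-references are each off by one relative to the paper's numbering (the relevant items are Definition~2.14, Corollary~2.16, and Lemma~2.17).
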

\begin{proof} Let $D_{F_1}$ and $D_{F_2}$  denote $p_1$-equidistributed and $p_2$-equidistributed sequences on $R$, respectively.  By Lemma 2.17  we know that
$D_{F_1} \cap D_{F_2}=\emptyset.$  By Corollary 2.16 we know that $p^N_1(D_{F_1})=1$ and $p^N_2(D_{F_2})=1$. This ends the proof of the theorem.
\end{proof}
\begin{defn} Let  $\{ \mu_i : i \in I\}$ be a  family  of probability measures defined on a measure space $(X,M)$. Let $S(X)$ be defined by
\begin{equation}
S(X)=\cap_{i \in I}\mbox{dom}(\overline{\mu}_i), \label{ccs}
\end{equation}
where $\overline{\mu}_i$ denotes a usual completion of the measure $\mu_i$ and $\mbox{dom}(\overline{\mu}_i)$ denotes the $sigma$-algebra of all $\overline{\mu}_i$-measurable subsets of $X$  for each $i \in I$. We say that the family $\{ \mu_i : i \in I\}$ is strong separable if  there  exists a partition $\{C_i :i  \in I\}$ of the space $X$ into elements of the $\sigma$-algebra $S(X)$ such that $\overline{\mu}_i(C_i)=1$ for each $i \in I$.
\end{defn}
\begin{defn} Let  $\{ \mu_i : i \in I\}$ be a  family  of probability measures defined on a measure space $(X,M)$. Let $L(I)$ denotes a minimal $\sigma$-algebra generated by all singletons of $I$  and  $S(X)$ be the $\sigma$-algebra of subsets of  $X$  defined by (2.7).
We say that a $(S(X),L(I))$-measurable mapping $T:X \to I$ is a consistent(or well-founded) estimate of an unknown parameter $i ~(i \in I)$ for the  family $\{ \mu_i : i \in I\}$  if the following condition
\begin{equation}
(\forall i)(i \in I \rightarrow \mu_i(T^{-1}(\{i\})=1))\label{ccs}
\end{equation}
holds true.
\end{defn}
\begin{lem}(\cite{Pan13}, Lemma 2.5, p.
474) Let  $\{ \mu_i : i \in I\}$ be a  family  of probability measures defined on a measure space $(X,M)$.  The following sentences are equivalent:

(i)~ The family  of probability measures $\{ \mu_i : i \in I\}$ is strong separable;

(ii) There exists  a consistent estimate of an unknown  parameter $i ~(i \in I)$ for the  family $\{ \mu_i : i \in I\}$.

\end{lem}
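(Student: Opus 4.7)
The plan is to prove the equivalence by exhibiting an explicit bijective correspondence between strong-separating partitions $\{C_i\}_{i \in I}$ of $X$ and consistent estimates $T : X \to I$, obtained via $C_i = T^{-1}(\{i\})$ in one direction and $T(x) = i \iff x \in C_i$ in the other. Both directions then reduce to routine bookkeeping about preimages and $\sigma$-algebra generation, so there is really no single ``hard step''; the main point I have to watch is that measurability is checked on a generating family.

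For the implication (i) $\Rightarrow$ (ii), I would start from a strong-separating partition $\{C_i : i \in I\}$, define $T : X \to I$ by $T(x) = i$ for the unique $i$ with $x \in C_i$, and then verify the two properties. For the consistency condition (2.8), note that $T^{-1}(\{i\}) = C_i$, so $\overline{\mu}_i(T^{-1}(\{i\})) = \overline{\mu}_i(C_i) = 1$ by assumption. For $(S(X), L(I))$-measurability, I would appeal to the standard fact that a function is measurable with respect to a $\sigma$-algebra generated by a family $\mathcal{G}$ as soon as preimages of members of $\mathcal{G}$ are measurable; here $\mathcal{G}$ is the family of singletons of $I$, and by construction $T^{-1}(\{i\}) = C_i \in S(X)$ for every $i \in I$.

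For the converse (ii) $\Rightarrow$ (i), assume a consistent estimate $T : X \to I$ is given and set $C_i := T^{-1}(\{i\})$. The family $\{C_i : i \in I\}$ is a partition of $X$ because $T$ is a function (distinct values give disjoint preimages, and every $x$ lies in $T^{-1}(\{T(x)\})$). Each $C_i$ belongs to $S(X)$ because $\{i\} \in L(I)$ and $T$ is $(S(X), L(I))$-measurable. Finally $\overline{\mu}_i(C_i) = \overline{\mu}_i(T^{-1}(\{i\})) = 1$ is exactly condition (2.8), so $\{C_i : i \in I\}$ witnesses strong separability of the family.

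The only point where I would slow down is the measurability check in (i) $\Rightarrow$ (ii): when $I$ is uncountable, $L(I)$ is the countable/cocountable $\sigma$-algebra, and one must be sure that measurability on singletons suffices. I would handle this by writing out that preimage commutes with countable unions and complements, so verifying $T^{-1}(\{i\}) \in S(X)$ for all $i$ transfers automatically to all of $L(I)$; beyond that, the argument is a direct unpacking of Definitions 2.18 and 2.19.
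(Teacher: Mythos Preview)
Your argument is correct and is the natural one: the correspondence $C_i \leftrightarrow T^{-1}(\{i\})$ is exactly what makes the two notions equivalent, and your handling of the measurability point (that $\{A \subseteq I : T^{-1}(A) \in S(X)\}$ is a $\sigma$-algebra containing all singletons, hence containing $L(I)$) is the right way to close that gap.

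There is, however, nothing to compare against in this paper: Lemma~2.21 is quoted from \cite{Pan13} (Lemma~2.5, p.~474) and no proof is reproduced here. So your write-up stands on its own rather than as an alternative to anything the present paper does. One small remark: Definition~2.20 writes $\mu_i(T^{-1}(\{i\}))=1$ rather than $\overline{\mu}_i(T^{-1}(\{i\}))=1$, whereas $T^{-1}(\{i\})$ is only guaranteed to lie in $S(X)=\bigcap_i \mathrm{dom}(\overline{\mu}_i)$, not in $M$. Your use of $\overline{\mu}_i$ is the internally consistent reading (and matches Definition~2.19), so this is a notational looseness in the paper rather than a defect in your proof.
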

Now let  $X_1,X_2, \cdots$  be an infinite sampling of independent, equally distributed real-valued random
variables with unknown distribution function $F$. Assume that we know only that $F$ belongs to the family of distribution functions $\{ F_{\theta}: \theta \in \Theta\}$, where
$\Theta$ is  a non-empty set.
Using these infinite  sampling we want to estimate
an unknown distribution function $F$.
Let $\mu_{\theta}$ denotes
a Borel probability measure on the real axis ${\bf  R}$ generated by $F_{\theta}$ for $\theta \in \Theta$. We denote by $\mu_{\theta}^N$ an infinite power of the measure $\mu_{\theta}$, i.e.,
$\mu_{\theta}^N=\mu_{\theta} \times \mu_{\theta} \times \cdots.$

The triplet $({\bf  R}^N, \mathcal{B}({\bf  R}^N),\mu_{\theta}^N)_{\theta \in \Theta}$ is called a statistical structure described our infinite experiment.
%\vspace{.08in} \noindent {\bf  Definition 2.4}
\begin{defn}
 A Borel
measurable function  $T_n : {\bf  R}^n \to {\bf  R} ~(n \in N)$ is called a
consistent estimator of a parameter $\theta$ (in the sense of
everywhere convergence) for the family $(\mu_{\theta}^N)_{\theta
\in \Theta}$ if the following condition
\begin{equation}
\mu_{\theta}^N (\{ (x_k)_{k \in N} :~(x_k)_{k \in N} \in {\bf  R}^N~\&~
\lim_{n \to \infty}T_n(x_1, \cdots, x_n)=\theta \})=1\label{ccs}
\end{equation}
holds true for each $\theta \in \Theta$.
\end{defn}
%\vspace{.08in} \noindent {\bf  Definition 2.5}
\begin{defn}

 A Borel
measurable function  $T_n : {\bf  R}^n \to {\bf  R} ~(n \in N)$ is called a
consistent estimator of a parameter $\theta$ (in the sense of
convergence in probability) for the family
$(\mu_{\theta}^N)_{\theta \in \Theta}$   if for every $\epsilon>0$ and
$\theta \in \Theta$ the following condition
\begin{equation}
\lim_{n \to \infty} \mu_{\theta}^N (\{ (x_k)_{k \in N} :~(x_k)_{k
\in N} \in {\bf  R}^N~\&~ | T_n(x_1, \cdots, x_n)-\theta|>\epsilon \})=0\label{ccs}
\end{equation}
holds true.
\end{defn}

%\vspace{.08in} \noindent {\bf  Definition 2.6}

\begin{defn}
A Borel
measurable function  $T_n : {\bf  R}^n \to {\bf  R} ~(n \in N)$ is called a
consistent estimator of a parameter $\theta$ (in the sense of
convergence in distribution ) for the family
$(\mu_{\theta}^N)_{\theta \in \Theta}$ if for every continuous bounded
real valued function $f$ on ${\bf  R}$ the following condition
\begin{equation}
\lim_{n \to \infty} \int_{{\bf  R}^N}f(T_n(x_1, \cdots, x_n))d
\mu_{\theta}^N((x_k)_{k \in N})=f(\theta)\label{ccs}
\end{equation}
holds.
\end{defn}

%\vspace{.08in} \noindent {\bf  Remark 2.1}

\begin{rem}
  Following
\cite{Shiryaev80} (see, Theorem 2, p. 272), for the family
$(\mu_{\theta}^N)_{\theta \in R}$ we have:

(a) an existence of a consistent estimator of a parameter $\theta$
in the sense of everywhere convergence implies an existence of a
consistent estimator of a parameter $\theta$ in the sense of
convergence in probability;

(b) an existence of a consistent estimator of a parameter $\theta$
in the sense of convergence in probability implies an existence of
a consistent estimator of a parameter $\theta$ in the sense of
convergence in distribution.
\end{rem}

Now let  $L(\Theta)$  be  a minimal $\sigma$-algebra of subsets generated by all singletons of the set $\Theta$.

\begin{defn} A $(\mathcal{B}({\bf  R}^N), L(\Theta))$-measurable function  $T : {\bf  R}^N \to \Theta$ is called a infinite
sample consistent estimate (or estimator) of a parameter $\theta$ for the
family $(\mu_{\theta}^{N})_{\theta \in \Theta}$ if the
condition
\begin{equation}
{\mu_{\theta}^{N}} (\{ (x_k)_{k \in N}
:~(x_k)_{k \in N} \in {\bf  R}^N~\&~
 T( (x_k)_{k \in N} )=\theta \})=1 \label{ccs}
\end{equation}
holds true for each $\theta \in \Theta$.
\end{defn}
\begin{defn} An infinite
sample consistent estimate $T : {\bf  R}^N \to \Theta$ of a parameter $\theta$ for the
family $(\mu_{\theta}^{N})_{\theta \in \Theta}$ is called objective if $T^{-1}(\theta)$ is a Haar ambivalent set for each $\theta \in \Theta$. Otherwise, $T$ is called subjective.
\end{defn}
\begin{defn} An objective infinite
sample consistent estimate $T : {\bf  R}^N \to \Theta$ of a parameter $\theta$ for the
family $(\mu_{\theta}^{N})_{\theta \in \Theta}$ is called strong if each  $\theta_1,\theta_2 \in \Theta$ there exists ~an~isometric
~(with~respect~to~Tychonoff metric) transformation $A_{(\theta_1,\theta_2)}$ of ${\bf R}^N$ such that $A_{(\theta_1,\theta_2)}(T^{-1}( \theta_1))\Delta T^{-1}( \theta_2)$ is shy.
\end{defn}
\begin{defn} Following \cite{IbramSkor80}, the family $(\mu_{\theta}^N)_{\theta \in \Theta}$ is
called  strictly separated if there exists a family $(Z_{\theta})_{\theta \in \Theta}$ of Borel subsets of ${\bf  R}^N$  such  that

(i)~$\mu_{\theta}^N(Z_{\theta})=1$ for $\theta \in \Theta$;

(ii)~$Z_{\theta_1} \cap Z_{\theta_2}=\emptyset$ for all different
parameters $\theta_1$ and $\theta_2$ from $\Theta$.

(iii)~$\cup_{\theta \in \Theta}Z_{\theta}={\bf  R}^N.$
\end{defn}
%\vspace{.08in} \noindent {\bf  Remark 2.2}
\begin{rem}
 Notice  that an
existence of an infinite sample consistent estimator of a parameter $\theta$
for the family $(\mu_{\theta}^N)_{\theta \in \Theta}$ implies that the
family $(\mu_{\theta}^N)_{\theta \in \Theta}$ is strictly separated.
Indeed, if we  set $Z_{\theta}=\{ (x_k)_{k \in N} : (x_k)_{k \in
N} \in {\bf  R}^N ~\&~T((x_k)_{k \in N})=\theta\}$ for $\theta \in \Theta$,
then all conditions participated in the Definition 2.29 will be satisfied.
\end{rem}

\section{An objective infinite sample consistent estimate of an unknown  distribution function}

%\vspace{.08in} \noindent {\bf  Theorem 3.1}~{\it

\begin{thm}
Let  $\mathcal{F}$ be a
family of distribution functions on ${\bf  R}$ satisfying
the following properties:

(i)  each element of  $\mathcal{F}$ is strictly increasing and continuous;

(ii) there exists a point $x_{*}$ such that $F_1(x_{*})\neq F_2(x_{*})$ for each different $F_1, F_2 \in {\mathcal{F}}$.

Setting $\Theta=\{ \theta=F(x_*): F \in {\mathcal{F}}\}$
and  $F_{\theta}=F$ for $\theta=F(x_*)$, we get the following parametrization  ${\mathcal{F}}=\{ F_{\theta}:\theta \in \Theta\}.$
We denote by $\mu_{\theta}$ a Borel probability measure in ${\bf R}$ defined by $F_{\theta}$ for $\theta \in \Theta$.
Then a function $T_n :{\bf  R}^n \to
{\bf  R}$, defined by
\begin{equation}
T_n(x_1, \cdots, x_n)=\frac{\#( \{ x_1, \cdots, x_n \}
\cap (-\infty;x_{*} ])}{n} \label{ccs}
\end{equation}
for $(x_1, \cdots, x_n) \in {\bf  R}^n ~(n \in N)$,  is a consistent
estimator of a parameter $\theta$ for the family
$(\mu_{\theta}^N)_{\theta \in \Theta}$ in the sense of almost
everywhere convergence.
\end{thm}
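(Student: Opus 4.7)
The plan is to reduce the statement to a direct application of the equidistribution results developed in Section 2. Observe that $T_n(x_1,\ldots,x_n)$ is exactly the empirical proportion of the first $n$ sample values that fall into the half-line $(-\infty, x_*]$. Thus, in view of the parametrization $\theta = F_{\theta}(x_*)$, it suffices to show that for $\mu_{\theta}^N$-almost every infinite sample $(x_k)_{k \in N}$,
\[
\lim_{n \to \infty} n^{-1} \#((-\infty, x_*] \cap \{x_1, \ldots, x_n\}) = F_{\theta}(x_*).
\]

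I would first invoke Corollary 2.16 applied to the strictly increasing continuous distribution function $F_{\theta}$: the set $D_{F_{\theta}} \subset {\bf R}^N$ of all $\mu_{\theta}$-equidistributed sequences on ${\bf R}$ satisfies $\mu_{\theta}^N(D_{F_{\theta}}) = 1$. Then, for any fixed sequence $(x_k)_{k \in N} \in D_{F_{\theta}}$, Definition 2.14 applied to the interval $[a,b]$ with $a = -\infty$ and $b = x_*$ (an endpoint choice explicitly admitted by that definition) yields
\[
\lim_{n \to \infty} n^{-1} \#((-\infty, x_*] \cap \{x_1, \ldots, x_n\}) = F_{\theta}(x_*) - F_{\theta}(-\infty) = F_{\theta}(x_*) = \theta.
\]
This is precisely $\lim_{n \to \infty} T_n(x_1, \ldots, x_n) = \theta$ on $D_{F_{\theta}}$, so the set on which $T_n$ converges to $\theta$ has $\mu_{\theta}^N$-measure $1$, giving (2.10) in the sense of almost everywhere convergence for every $\theta \in \Theta$.

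Because the argument consists only of substituting the chosen half-line into the equidistribution definition and quoting the measure-one statement of Corollary 2.16, I do not foresee any substantive obstacle. The only point I would verify carefully is that Definition 2.14 genuinely admits the endpoint $a = -\infty$ so that $(-\infty, x_*]$ is a legitimate test interval, together with the standard convention $F_{\theta}(-\infty) = 0$ used throughout the paper; once this is in place, the conclusion is immediate from the parametrization $\theta = F_{\theta}(x_*)$ and the Borel measurability of each $T_n$.
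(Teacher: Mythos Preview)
Your proposal is correct and follows essentially the same route as the paper: both define the set of $\mu_\theta$-equidistributed sequences (your $D_{F_\theta}$ is the paper's $A_\theta$), invoke Corollary~2.16 to give it full $\mu_\theta^N$-measure, and then read off $\lim_n T_n = F_\theta(x_*) = \theta$ on that set from the definition of equidistribution. Your explicit check that Definition~2.14 admits $a=-\infty$ with $F_\theta(-\infty)=0$ is a reasonable precaution, but otherwise there is nothing to add.
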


%\noindent{\bf  Proof.}
\begin{proof}
It is clear that $T_n$ is Borel measurable
function for $n \in N$. For $\theta \in {\bf  R}$, we  set
\begin{equation}
A_{\theta}=\{ (x_k)_{k \in N}~:~ (x_k)_{k \in N}~ \mbox{is
~}\mu_{\theta}-{uniformly~ distributed ~on}~ {\bf  R} \}.\label{ccs}
\end{equation}
Following Corollary 2.16, we have $\mu_{\theta}^{N}(A_{\theta})=1$
for $\theta \in \Theta$.

For $\theta \in \Theta$, we get

\begin{equation*}
\mu_{\theta}^N (\{ (x_k)_{k \in N} \in
{\bf  R}^N~:~ ~\lim_{n \to \infty}T_n(x_1, \cdots, x_n)=\theta \})=\mu_{\theta}^N (\{ (x_k)_{k \in N} \in {\bf  R}^N
~:~
\end{equation*}
\begin{equation}
\lim_{n \to \infty} n^{-1}\#(\{ x_1, \cdots, x_n\} \cap
(-\infty;x_{*}])=F_{\theta}(x_{*}) \})
\ge \mu_{\theta}^N
(A_{\theta})=1.\label{ccs}
\end{equation}
\end{proof}

The following corollaries are simple consequences of Theorem
3.1 and Remark 2.25.

%\vspace{.08in} \noindent {\bf  Corollary 3.1}~{\it

\begin{cor}
 An estimator
$T_n$ defined by (3.1)  is a consistent estimator of a parameter
$\theta$ for the family $(\mu_{\theta}^N)_{\theta \in \Theta}$ in the
sense of convergence in probability.
\end{cor}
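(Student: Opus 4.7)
The plan is to exploit the fact, already established in Theorem~3.1, that $T_n$ converges to $\theta$ pointwise on a set of full $\mu_\theta^N$-measure, and then invoke the standard measure-theoretic implication recorded in Remark~2.25(a) that everywhere-convergence consistency entails consistency in probability. Thus the corollary reduces to a well-known step, and no further structural analysis of the statistical model is needed.

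More concretely, for fixed $\theta \in \Theta$ and $\epsilon > 0$, I would set
\[
B_\theta=\{(x_k)_{k\in N}\in{\bf R}^N : \lim_{n\to\infty}T_n(x_1,\ldots,x_n)=\theta\},\qquad A_n^\epsilon=\{(x_k)_{k\in N}:|T_n(x_1,\ldots,x_n)-\theta|>\epsilon\}.
\]
Theorem~3.1 gives $\mu_\theta^N(B_\theta)=1$. Since every point of $B_\theta$ lies outside $A_n^\epsilon$ from some index onward, one has $B_\theta\cap\limsup_n A_n^\epsilon=\emptyset$, hence $\mu_\theta^N(\limsup_n A_n^\epsilon)=0$. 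Monotone continuity of the measure from above, applied to the decreasing sequence $\bigcup_{m\ge n}A_m^\epsilon$, then yields $\mu_\theta^N(A_n^\epsilon)\le\mu_\theta^N\bigl(\bigcup_{m\ge n}A_m^\epsilon\bigr)\to 0$, which is exactly the condition in Definition~2.23.

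There is no serious obstacle here: the only subtlety is the routine Borel measurability of $A_n^\epsilon$, which follows from the Borel measurability of each $T_n$ noted at the outset of the proof of Theorem~3.1. Since the whole argument is a direct deduction from Theorem~3.1 via the standard implication \emph{almost sure convergence implies convergence in probability} (Remark~2.25(a)), the proof can be given in a single short paragraph.
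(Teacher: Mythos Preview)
Your proposal is correct and follows exactly the route the paper takes: the paper states this corollary (together with Corollary~3.3) as a ``simple consequence of Theorem~3.1 and Remark~2.25'' without giving any further argument. Your write-up merely spells out the standard passage from almost-sure convergence to convergence in probability that Remark~2.25(a) records, so it is fully in line with---and in fact more detailed than---the paper's own justification.
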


%\vspace{.08in} \noindent {\bf  Corollary 3.2}~{\it
\begin{cor}
 An estimator
$T_n$ defined by (3.1) is a consistent estimator of a parameter
$\theta$ for the family $(\mu_{\theta}^N)_{\theta \in \Theta}$ in the
sense of convergence in distribution.
\end{cor}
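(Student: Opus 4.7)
The statement to be proved follows by invoking the implication chain already recorded in Remark 2.25. My plan is to deduce Corollary 3.3 in two short steps from results already available in the paper, and then (as a sanity check) to indicate a more direct route via bounded convergence.

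The first and quickest route is to chain the earlier results: Theorem 3.1 establishes that $T_n$ is consistent in the sense of everywhere convergence for the family $(\mu_{\theta}^N)_{\theta\in\Theta}$, and Corollary 3.2 has already upgraded this to consistency in the sense of convergence in probability by invoking Remark 2.25(a). A second application of Remark 2.25(b) then yields consistency in the sense of convergence in distribution, which is exactly the assertion of Corollary 3.3. So the proof is essentially a one-line invocation: ``By Corollary 3.2 and Remark 2.25(b).''

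As a second, self-contained route, I would argue directly from Theorem 3.1. Fix $\theta\in\Theta$ and an arbitrary bounded continuous $f:{\bf R}\to{\bf R}$. By Theorem 3.1 there is a Borel set $B_\theta\subset{\bf R}^N$ with $\mu_\theta^N(B_\theta)=1$ on which $T_n(x_1,\dots,x_n)\to\theta$. Continuity of $f$ gives $f(T_n(x_1,\dots,x_n))\to f(\theta)$ pointwise on $B_\theta$, and boundedness of $f$ supplies a constant integrable dominating function on the probability space $({\bf R}^N,\mu_\theta^N)$. The dominated convergence theorem then yields
\begin{equation*}
\lim_{n\to\infty}\int_{{\bf R}^N} f(T_n(x_1,\dots,x_n))\,d\mu_\theta^N\bigl((x_k)_{k\in N}\bigr) \;=\; f(\theta),
\end{equation*}
which is precisely condition (2.14) in Definition 2.24.

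There is no real obstacle here; the only pitfall is notational, namely making sure that the pushforward formulation in Definition 2.24 is read through the lens of the coordinate integrand $f\circ T_n$ on $({\bf R}^N,\mu_\theta^N)$, so that one may apply the bounded/dominated convergence theorem directly rather than having to invoke a Portmanteau-type argument. I would therefore present the proof in the most economical form, as an immediate consequence of Corollary 3.2 and Remark 2.25(b), mentioning the bounded-convergence argument only as a remark if space permits.
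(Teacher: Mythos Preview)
Your proposal is correct and follows essentially the same approach as the paper, which simply states that Corollaries 3.2 and 3.3 are ``simple consequences of Theorem 3.1 and Remark 2.25'' without further elaboration. Your additional bounded-convergence argument is a valid alternative justification not present in the paper, but your primary route via Remark 2.25(b) matches the paper's intended proof exactly.
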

%\vspace{.08in} \noindent {\bf  Theorem 3.2}~{\it

\begin{thm} Let ${\mathcal{F}}=\{ F_{\theta}:\theta \in \Theta\}$  and  $(\mu_{\theta}^N)_{\theta \in \Theta}$ come from Theorem 3.1.  Let fix $\theta_0 \in \Theta$ and define an estimate $T^{(1)}_{\theta_0}: {\bf R}^N \to \Theta$ as follows:
$T^{(1)}_{\theta_0}((x_k)_{k \in N})=\overline{\lim}\widetilde{T_n}((x_k)_{k \in N})$ if
$\overline{\lim}\widetilde{T_n}((x_k)_{k \in N}) \in \Theta \setminus \{\theta_0\}$
and $T^{(1)}_{\theta_0}((x_k)_{k \in N})=\theta_0,$
otherwise, where $\overline{\lim}\widetilde{T_n} = \inf_n \sup_{m \ge
n}\widetilde{T_m}$
and
\begin{equation}
\widetilde{T_n}((x_k)_{k \in N})=n^{-1}\#(\{ x_1, \cdots,
x_n\} \cap (-\infty;x_*])) \label{ccs}
\end{equation}
for  $(x_k)_{k \in N} \in {\bf  R}^N $. Then $T^{(1)}_{\theta_0}$  is an objective  infinite sample consistent
estimator of a parameter $\theta$ for the family
$(\mu_{\theta}^N)_{\theta \in \Theta}$.
\end{thm}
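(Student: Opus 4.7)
The plan is to verify three things for $T^{(1)}_{\theta_0}$: Borel measurability (in the sense of Definition 2.27), infinite-sample consistency, and Haar ambivalence of every fibre $(T^{(1)}_{\theta_0})^{-1}(\theta)$. Measurability is immediate once one notes that each $\widetilde{T_n}$ is a Borel function of the first $n$ coordinates, so $\overline{\lim}\widetilde{T_n}$ is a Borel real-valued function on ${\bf R}^N$; the preimage of $\theta \in \Theta \setminus \{\theta_0\}$ under $T^{(1)}_{\theta_0}$ is $\{\overline{\lim}\widetilde{T_n} = \theta\}$, and the preimage of $\theta_0$ is the complement of the union over $\theta \in \Theta \setminus \{\theta_0\}$ of these sets.

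For consistency, fix $\theta \in \Theta$ and let $A_\theta$ denote the set of $\mu_\theta$-equidistributed sequences on ${\bf R}$. By Corollary 2.16 one has $\mu_\theta^N(A_\theta) = 1$, and for $(x_k) \in A_\theta$ Definition 2.14 gives $\lim_n \widetilde{T_n}((x_k)) = F_\theta(x_*) = \theta$, hence $\overline{\lim}\widetilde{T_n}((x_k)) = \theta$. When $\theta \neq \theta_0$ the first branch of the definition of $T^{(1)}_{\theta_0}$ returns $\theta$; when $\theta = \theta_0$ the otherwise branch returns $\theta_0$. Either way $T^{(1)}_{\theta_0} \equiv \theta$ on $A_\theta$, so $\mu_\theta^N((T^{(1)}_{\theta_0})^{-1}(\theta)) = 1$.

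The central step is showing that every fibre $B_\theta := (T^{(1)}_{\theta_0})^{-1}(\theta)$ is Haar ambivalent, which I will do via the criterion recorded in Remark 2.8: a Borel subset of ${\bf R}^N$ that contains a translate of every compact subset of ${\bf R}^N$ is non-shy. Given $\theta \in \Theta$ and compact $K \subseteq {\bf R}^N$, each coordinate projection $\pi_k(K)$ is bounded, say $\pi_k(K) \subseteq [-M_k, M_k]$. Pick $J \subseteq {\bf N}$ with upper density $\theta \in [0,1]$ and set $v_k = x_* - M_k - 1$ for $k \in J$ and $v_k = x_* + M_k + 1$ for $k \notin J$. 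For every $(y_k) \in K$ one then has $y_k + v_k \le x_*$ exactly when $k \in J$, so $\widetilde{T_n}((y_k + v_k)) = n^{-1}\#(J \cap \{1, \ldots, n\})$ for every $n$, the same for all $(y_k) \in K$; its upper limit equals the upper density $\theta$. The case analysis from the consistency step then gives $T^{(1)}_{\theta_0}((y_k + v_k)) = \theta$, so $K + v \subseteq B_\theta$, proving non-shyness.

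For non-prevalence of $B_\theta$ I repeat the same translation construction but with $J$ of upper density some $\theta' \in \Theta \setminus \{\theta\}$; nondegeneracy of $\mathcal{F}$ (hence $|\Theta| \ge 2$) ensures such $\theta'$ exists, and choosing $\theta' = \theta_0$ when $\theta \neq \theta_0$ (respectively any $\theta' \in \Theta \setminus \{\theta_0\}$ when $\theta = \theta_0$) makes the translate land in $B_{\theta'}$, which is disjoint from $B_\theta$; hence ${\bf R}^N \setminus B_\theta$ contains a translate of every compact set and is non-shy as well, so $B_\theta$ is not prevalent. The main care point will be the bookkeeping around the otherwise clause, specifically ensuring that when $\theta = \theta_0$ the translates for non-prevalence really land in a fibre distinct from $B_{\theta_0}$ and not get absorbed back by the exceptional branch; modulo this, the argument reduces to the standard combinatorial fact that subsets of ${\bf N}$ of any prescribed upper density in $[0,1]$ exist.
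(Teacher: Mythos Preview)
Your proof is correct and follows essentially the same route as the paper's: measurability and consistency are handled identically, and the Haar-ambivalence step in both cases rests on the translate-of-every-compact criterion recorded in Remark~2.8. The only cosmetic difference is packaging: the paper fixes a $\mu_\theta$-equidistributed sequence $(x_k)$, sets $J=\{k:x_k\le x_*\}$, and invokes Lemma~2.7 to conclude that the translate $C(\theta)=(x_k)+A_J\subseteq B(\theta)$ is Haar ambivalent, whereas you bypass the equidistributed sequence and Lemma~2.7 by choosing $J$ of upper density $\theta$ directly and building the compact-dependent shift $v$ by hand---the underlying mechanism is the same.
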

%\noindent{\bf  Proof.}
\begin{proof}
Following \cite{Shiryaev80}(see, p. 189),
the function $ \overline{\lim}\widetilde{T_n}$  is Borel measurable which implies that the function $ \overline{\lim}\widetilde{T_n}$ is  $(\mathcal{B}({\bf R}^N),L(\Theta))$-measurable.  Following
Corollary 2.16, we have $\mu_{\theta}^{N}(A_{\theta})=1$ for
$\theta \in \Theta$, where $A_{\theta}$ is defined by (3.2). Hence we get
\begin{equation*}
\mu_{\theta}^N (\{ (x_k)_{k \in N} \in {\bf R}^N
~:~T^{(1)}_{\theta_0}(x_k)_{k \in N}=\theta \}) \ge
\end{equation*}
\begin{equation*}
\mu_{\theta}^N (\{ (x_k)_{k \in N} \in {\bf R}^N
~:~\overline{\lim}\widetilde{T_n}(x_k)_{k \in N}=\theta \}) \ge
\end{equation*}
\begin{equation*}
\mu_{\theta}^N (\{(x_k)_{k \in N} \in {\bf  R}^N
~:~~\overline{\lim} \widetilde{T_n}(x_k)_{k \in N}=
\end{equation*}
\begin{equation}
\underline{\lim}\widetilde{T_n}(x_k)_{k \in N}=F_{\theta}(x_{*}) \}) \ge
\mu_{\theta}^N(A_{\theta})=1\label{ccs}
\end{equation}
for $\theta
\in \Theta$.
Thus we have proved that the estimator
~${{\bf T}^{(1)}}_{\theta_0}$ is an infinite sample
consistent estimator of a parameter $\theta$ for the family
$(\mu_{\theta}^N)_{\theta \in \Theta}$.

Now let us show that ~${{\bf T}^{(1)}}_{\theta_0}$ is an objective infinite sample
consistent estimator of a parameter $\theta$ for the family
$(\mu_{\theta}^N)_{\theta \in \Theta}$.

Let us show that $B(\theta):=\big({{\bf T}^{(1)}}_{\theta_0}\big)^{-1}(\theta)$ is a Haar ambivalent set for each $\theta \in \Theta$.

Let $(x_k)_{k \in N}$ be $\mu_{\theta}$-uniformly distributed sequence on ${\bf  R}$. Then we get

\begin{equation}
\lim_{n \to \infty}n^{-1}\#(\{ x_1, \cdots,
x_n\} \cap (-\infty;x_*])=\theta.\label{ccs}
\end{equation}
Let consider a set

\begin{equation}
C(\theta)=\{ (y_k)_{k \in N}: y_k \le  x_k ~\mbox{if}~x_k \le x_*~\&~ y_k>x_k ~\mbox{if}~x_k>x_*\}.\label{ccs}
\end{equation}
Setting $J=\{ k : x_k \le x_*\},$ we claim that $C(\theta)-(x_k)_{k \in N}=A_J$, where $A_J$ comes from Lemma 2.7. Since any translate of Haar ambivalent set is again  Haar ambivalent set, we claim that  $C(\theta)$ is  Haar ambivalent set. A set $B(\theta)$ which contains the Haar ambivalent set $C(\theta)$  is non-shy. Since $\theta \in \Theta$ was taken arbitrary we deduce that each $B_{\theta}$ is Haar ambivalent set. The latter relation means that  the estimator
~${{\bf T}^{(1)}}_{\theta_0}$ is an objective infinite sample
consistent estimator of a parameter $\theta$ for the family
$(\mu_{\theta}^N)_{\theta \in \Theta}$.
\end{proof}
%\vspace{.08in} \noindent {\bf  Theorem 3.3}~{\it
\begin{thm}  Let ${\mathcal{F}}=\{ F_{\theta}:\theta \in \Theta\}$  and  $(\mu_{\theta}^N)_{\theta \in \Theta}$ come from Theorem 3.1.
Let fix  $\theta_0 \in \Theta$  and define  an estimate ${{\bf T}^{(2)}}_{\theta_0}: {\bf  R}^N \to \Theta$ as follows:
${{\bf T}^{(2)}}_{\theta_0}((x_k)_{k \in N})=\underline{\lim}\widetilde{T_n}((x_k)_{k \in N})$ if  $\underline{\lim}\widetilde{T_n}((x_k)_{k \in N}) \in \Theta \setminus \{\theta_0\}$ and ${{\bf T}^{(2)}}_{\theta_0}((x_k)_{k \in N})=\theta_0$ otherwise, where
$\underline{\lim}\widetilde{T_n}=\sup_n\inf_{m \ge n}\widetilde{T_m}$ and
\begin{equation}
\widetilde{T_n}((x_k)_{k \in N})=n^{-1}\#(\{ x_1, \cdots,
x_n\} \cap (-\infty;x_*]) \label{ccs}
\end{equation}
for $(x_k)_{k \in N} \in {\bf  R}^N$. Then ${{\bf T}^{(2)}}_{\theta_0}$  is an objective infinite sample consistent
estimator of a parameter $\theta$ for the family
$(\mu_{\theta}^N)_{\theta \in \Theta}$.
\end{thm}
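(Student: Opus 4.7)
The proof should follow the template of Theorem 3.4 almost verbatim, with $\underline{\lim}\widetilde{T_n}$ replacing $\overline{\lim}\widetilde{T_n}$. The plan is to verify three things in turn: measurability of $T^{(2)}_{\theta_0}$, infinite-sample consistency, and Haar-ambivalence of the level sets.

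First, I would note that $\underline{\lim}\widetilde{T_n} = \sup_n \inf_{m\ge n}\widetilde{T_m}$ is a countable sup of countable infs of Borel measurable functions on ${\bf R}^N$ (each $\widetilde{T_m}$ is clearly Borel, being built from coordinate projections and the indicator of $(-\infty,x_*]$). Since the agreement/disagreement of $\underline{\lim}\widetilde{T_n}$ with $\theta_0$ and its membership in $\Theta$ are Borel conditions, $T^{(2)}_{\theta_0}$ is $(\mathcal{B}({\bf R}^N), L(\Theta))$-measurable. This step is routine.

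Second, for consistency, I would invoke Corollary 2.16: for every $\theta \in \Theta$, the set $A_\theta$ of $\mu_\theta$-uniformly distributed sequences has $\mu_\theta^N(A_\theta) = 1$. On $A_\theta$, by Definition 2.14 applied to the interval $(-\infty, x_*]$, we have $\lim_{n\to\infty}\widetilde{T_n}((x_k)_{k\in N}) = F_\theta(x_*) = \theta$, so in particular $\underline{\lim}\widetilde{T_n} = \theta \in \Theta$. Provided $\theta \neq \theta_0$, this gives $T^{(2)}_{\theta_0}((x_k)_{k\in N}) = \theta$; and when $\theta = \theta_0$, the definition of $T^{(2)}_{\theta_0}$ returns $\theta_0$ directly. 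Either way $T^{(2)}_{\theta_0} = \theta$ on $A_\theta$, yielding $\mu_\theta^N((T^{(2)}_{\theta_0})^{-1}(\theta)) \ge \mu_\theta^N(A_\theta) = 1$.

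Third, for objectivity, I would repeat the Lemma 2.7 trick used in the proof of Theorem 3.4. Fix $\theta \in \Theta$ and pick any $\mu_\theta$-equidistributed $(x_k)_{k\in N}$. Define
\begin{equation}
C(\theta) = \{(y_k)_{k\in N} : y_k \le x_k \text{ if } x_k \le x_*,\ y_k > x_k \text{ if } x_k > x_*\},
\end{equation}
and set $J = \{k : x_k \le x_*\}$, so that $C(\theta) - (x_k)_{k\in N} = A_J$ with $A_J$ as in Lemma 2.7. Since $A_J$ is Haar ambivalent and translation preserves Haar ambivalence, $C(\theta)$ is Haar ambivalent. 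The key observation is that every $(y_k)_{k\in N} \in C(\theta)$ inherits the asymptotic counting behavior: $\#(\{y_1,\dots,y_n\} \cap (-\infty,x_*])$ differs from $\#(\{x_1,\dots,x_n\} \cap (-\infty,x_*])$ by at most finitely many indices where the inequalities change — but by construction of $C(\theta)$ no such indices exist, so in fact $\widetilde{T_n}((y_k)) = \widetilde{T_n}((x_k))$ for every $n$, hence $\underline{\lim}\widetilde{T_n}((y_k)) = \theta$ and therefore $T^{(2)}_{\theta_0}((y_k)) = \theta$ for all $\theta \neq \theta_0$ (and the $\theta = \theta_0$ case is handled separately, where $(T^{(2)}_{\theta_0})^{-1}(\theta_0)$ still contains $C(\theta_0)$). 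Thus $(T^{(2)}_{\theta_0})^{-1}(\theta) \supseteq C(\theta)$ is non-shy. Haar ambivalence then follows because the $(T^{(2)}_{\theta_0})^{-1}(\theta)$ for distinct $\theta$ are pairwise disjoint, so no single one can be prevalent (a prevalent set cannot be disjoint from another non-shy set, by Lemma 2.4 applied to the complement).

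The only mildly delicate point is confirming that no preimage can be prevalent, since consistency and Haar ambivalence of a different preimage must together rule this out — but this is immediate from disjointness of the level sets together with the fact that a non-shy set cannot lie inside a shy set (the complement of a prevalent set). There is no serious obstacle; the proof is parallel to Theorem 3.4 with the roles of $\underline{\lim}$ and $\overline{\lim}$ swapped, and the equidistribution argument guarantees that both extremal limits coincide with $F_\theta(x_*) = \theta$ on the full-measure set $A_\theta$.
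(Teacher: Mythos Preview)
Your proposal is correct and follows essentially the same approach as the paper's own proof: measurability via standard limit arguments, consistency via Corollary 2.16 and the equidistribution set $A_\theta$, and Haar ambivalence via the translate-of-$A_J$ construction from Lemma 2.7. In fact you supply more detail than the paper does in two places---you verify explicitly that $\widetilde{T_n}((y_k))=\widetilde{T_n}((x_k))$ on $C(\theta)$, and you spell out why non-shyness of every level set forces non-prevalence of each (the paper simply asserts ``each $B_\theta$ is Haar ambivalent'' after showing non-shyness).
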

%\noindent{\bf  Proof.}
\begin{proof}
Following \cite{Shiryaev80}(see, p. 189),
the function $\underline{\lim}\widetilde{T_n}$  is Borel measurable which implies that the function $ \overline{\lim}\widetilde{T_n}$ is  $(\mathcal{B}({\bf  R}^n),L(\Theta))$-measurable. Following
Corollary 2.16, we have $\mu_{\theta}^{N}(A_{\theta})=1$ for
$\theta \in \Theta$, where $A_{\theta}$ is defined by (3.2). Hence we get
\begin{equation*}
\mu_{\theta}^N (\{(x_k)_{k \in N} \in {\bf  R}^N
~:~{{\bf T}^{(2)}}_{\theta_0}(x_k)_{k \in N}=\theta \}) \ge
\end{equation*}
\begin{equation*}
\mu_{\theta}^N (\{ (x_k)_{k \in N} \in {\bf  R}^N
~:~\underline{\lim}\widetilde{T_n}(x_k)_{k \in N}=\theta \}) \ge
\end{equation*}
\begin{equation*}
\mu_{\theta}^N (\{ (x_k)_{k \in N} \in {\bf  R}^N
~:~~\overline{\lim} \widetilde{T_n}(x_k)_{k \in N}=
\end{equation*}
\begin{equation}
\underline{\lim}\widetilde{T_n}(x_k)_{k \in N}=F_{\theta}(x_{*}) \}) \ge
\mu_{\theta}^N(A_{\theta})=1\label{ccs}
\end{equation}
for $\theta \in \Theta$.

Thus we have proved that the estimator
~${{\bf T}^{(2)}}_{\theta_0}$ is an infinite sample
consistent estimators of a parameter $\theta$ for the family
$(\mu_{\theta}^N)_{\theta \in \Theta}$.

Now let us show that ~${{\bf T}^{(2)}}_{\theta_0}$ is an objective infinite sample
consistent estimator of a parameter $\theta$ for the family
$(\mu_{\theta}^N)_{\theta \in \Theta}$.

Let us show that  $B(\theta)=\big({{\bf T}^{(2)}}_{\theta_0}\big)^{-1}(\theta)$ is Haar ambivalent set for each $\theta \in \Theta$.

Let $(x_k)_{k \in N}$ be $\mu_{\theta}$-uniformly distributed sequence. Then we get
\begin{equation}
\lim_{n \to \infty}n^{-1}\#(\{ x_1, \cdots,
x_n\} \cap (-\infty;x_*])=\theta.\label{ccs}
\end{equation}
Let consider a set
\begin{equation}
C(\theta)=\{ (y_k)_{k \in N}: (y_k)_{k \in N} \in {\bf R}^N~\&~y_k \le  x_k ~\mbox{if}~x_k \le x_*~\&~ y_k>x_k ~\mbox{if}~x_k>x_*\}.\label{ccs}
\end{equation}
Setting $J=\{ k :  x_k \le x_*\},$  we deduce that $C(\theta)-(x_k)_{k \in N}=A_J$, where $A_J$ comes from  Lemma 2.7. Since any translate of Haar ambivalent set  is again Haar ambivalent set, we claim that  $C(\theta)$ is Haar ambivalent set. A set $B(\theta)$ which contains the Haar ambivalent set $C(\theta)$  is non-shy. Since $\theta \in \Theta$ was taken arbitrary we deduce that each $B_{\theta}$ is Haar ambivalent set. The latter relation means that  the estimator
~${{\bf T}^{(2)}}_{\theta_0}$ is an objective infinite sample
consistent estimator of a parameter $\theta$ for the family
$(\mu_{\theta}^N)_{\theta \in \Theta}$.
\end{proof}
%\vspace{.08in} \noindent {\bf  Remark 3.2}~{\it
\begin{rem} It can be shown that  Theorems 3.4 and 3.5 extend the recent result obtained in \cite{Pan15}(see Theorem 3.1). Indeed, let consider  the linear
one-dimensional stochastic system
\begin{equation}
(\xi_k)_{k \in N}=(\theta_k)_{k \in
N}+(\Delta_k)_{k \in N}, \label{ccs}
\end{equation}
where $(\theta_k)_{k \in N} \in {\bf  R}^N$ is a sequence of
useful signals, $(\Delta_k)_{k \in N}$ is sequence of
independent identically distributed random variables (the
so-called generalized ``white noise" ) defined on some probability
space $(\Omega,{\mathcal{F}},P)$ and  $(\xi_k)_{k \in N}$
is a sequence of transformed signals. Let $\mu$ be a Borel
probability measure on ${\bf  R}$ defined by a random variable
$\Delta_1$. Then the $N$-power  of the measure $\mu$
denoted by $\mu^{N}$  coincides with  the  Borel
probability measure on ${\bf  R}^N$ defined by the generalized
``white noise", i.e.,
\begin{equation}
(\forall X)(X \in {\mathcal{B}}{({\bf  R}^N}) \rightarrow
{\mu^{N}}(X)=P(\{ \omega : \omega \in \Omega ~\&~
(\Delta_k(\omega))_{k \in N} \in X\})),\label{ccs}
\end{equation}
where $\mathcal{B}({\bf  R}^N)$ is the Borel  $\sigma$-algebra
of subsets of ${\bf  R}^N$.

  Following \cite{IbramSkor80},   a general decision in the information transmission theory is that the Borel probability measure
$\lambda$, defined  by the sequence of transformed signals
$(\xi_k)_{k \in N}$ coincides with $\big(\mu^{N}
\big)_{\theta_0}$ for some $\theta_0 \in \Theta$ provided that
\begin{equation}
(\exists \theta_0)( \theta_0 \in \Theta \rightarrow (\forall X)(X
\in {\mathcal{B}({\bf  R}^N)} \rightarrow
\lambda(X)=\big(\mu^{N} \big)_{\theta_0}(X))),
\label{ccs}
\end{equation}
where $~\big(\mu^{N}
\big)_{\theta_0}(X)=\mu^N(X-\theta_0)$ for $X \in
\mathcal{B}({\bf  R}^N)$.

In \cite{Pan13} has been considered  a particular case of the above model (3.12) for which
\begin{equation}
(\theta_k)_{k \in N}  \in \{ (\theta,\theta,\cdots) : \theta \in  {\bf R} \}.\label{ccs}
\end{equation}
For $\theta \in  {\bf R}$, a measure $\mu_{\theta}^{N}$
defined by
\begin{equation}
\mu_{\theta}^N=\mu_{\theta} \times  \mu_{\theta} \times \cdots,\label{ccs}
\end{equation}
where $\mu_{\theta}$ is a $\theta$-shift of $\mu$ (i.e.,
$\mu_{\theta}(X)=\mu(X-\theta)$ for $X \in
\mathcal{B}( {\bf R})$), is called the $N$-power of the
$\theta$-shift of $\mu$ on $ {\bf R}$.

Let denote by $F_{\theta}$ a distribution function defined by $\mu_{\theta}$ for $\theta \in \Theta$.
Notice that the family  ${\mathcal{F}}=\{ F_{\theta} : \theta  \in \Theta \}$  satisfies all conditions participated in Theorems 3.1. Indeed, under  $x_*$ we can take the zero of the real axis. Then following  Theorems 3.4 and 3.5, estimators $T^{(1)}_{\theta_0}$ and $T^{(2)}_{\theta_0}$  are objective infinite sample consistent
estimators of a useful signal $\theta$ in the linear one-dimensional stochastic system (3.12). Notice that  these estimators exactly coincide with estimators constructed in \cite{Pan15}(see  Theorem 3.1).

\end{rem}
\begin{thm} Let  $\mathcal{F}$ be a  family  of all strictly increasing and continuous distribution functions in ${\bf  R}$  and $p_{F}$ be a Borel probability measure
on ${\bf R}$ defined by $F$ for each $F \in \mathcal{F}$. Then the family of Borel probability measures  $\{ p_F^N : F \in \mathcal{F})\} $ is strong separable.
\end{thm}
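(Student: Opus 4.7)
My plan is to apply Lemma 2.21: it suffices to construct a partition $\{C_F : F \in \mathcal{F}\}$ of $\mathbf{R}^N$ with $C_F \in S(\mathbf{R}^N) = \bigcap_{F' \in \mathcal{F}} \mathrm{dom}(\overline{p_{F'}^N})$ and $\overline{p_F^N}(C_F) = 1$ for every $F \in \mathcal{F}$. The natural building blocks are the sets $D_F$ of $p_F$-equidistributed sequences introduced in Definition 2.13.

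First I would note that each $D_F$ is a Borel subset of $\mathbf{R}^N$: by continuity of $F$, the equidistribution condition (2.5) only needs to be checked on intervals with rational endpoints, so $D_F$ is a countable intersection of Borel sets determined by arithmetic limits of counting functions. By Corollary 2.16 we have $p_F^N(D_F) = 1$, and by Lemma 2.17 the sets $D_F$ are pairwise disjoint as $F$ ranges over $\mathcal{F}$. Thus the family $\{D_F : F \in \mathcal{F}\}$ already realises almost everything one needs; only the ``leftover'' set
\[
L = \mathbf{R}^N \setminus \bigcup_{F \in \mathcal{F}} D_F
\]
has to be absorbed into one block of the partition.

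The key step, which is the main obstacle since $\mathcal{F}$ is uncountable and $L$ need not be Borel, is to verify that $L \in S(\mathbf{R}^N)$. For any fixed $F' \in \mathcal{F}$, the inclusion $L \subseteq \mathbf{R}^N \setminus D_{F'}$ exhibits $L$ as a subset of the Borel $p_{F'}^N$-null set $\mathbf{R}^N \setminus D_{F'}$; hence $L \in \mathrm{dom}(\overline{p_{F'}^N})$ and $\overline{p_{F'}^N}(L) = 0$. Since $F'$ was arbitrary, $L \in S(\mathbf{R}^N)$.

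Finally, I would fix any $F_0 \in \mathcal{F}$ and set $C_{F_0} = D_{F_0} \cup L$ and $C_F = D_F$ for $F \neq F_0$. Every $C_F$ lies in $S(\mathbf{R}^N)$ (being Borel or a union of a Borel set with the element $L$ of $S(\mathbf{R}^N)$). Pairwise disjointness follows from Lemma 2.17 together with $L \cap D_F = \emptyset$ for every $F$ by definition of $L$, and the union equals $\bigcup_{F} D_F \cup L = \mathbf{R}^N$. Finally, $\overline{p_{F_0}^N}(C_{F_0}) \ge p_{F_0}^N(D_{F_0}) = 1$ and $\overline{p_F^N}(C_F) = p_F^N(D_F) = 1$ for $F \neq F_0$, which verifies Definition 2.19 and concludes the proof.
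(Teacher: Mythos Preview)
Your proof is correct and follows essentially the same route as the paper: you use the Borel sets $D_F$ of $p_F$-equidistributed sequences, invoke Corollary 2.16 and Lemma 2.17 for full measure and disjointness, show the leftover $L=\mathbf{R}^N\setminus\bigcup_F D_F$ lies in $S(\mathbf{R}^N)$ because it is contained in every null set $\mathbf{R}^N\setminus D_{F'}$, and then absorb $L$ into a single block $C_{F_0}=D_{F_0}\cup L$. The only cosmetic slip is the opening reference to Lemma 2.21; what you actually (and correctly) do is verify Definition 2.19 directly.
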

\begin{proof} We denote by $D_{F}$ the set of all  $p_F$-equidistributed  sequences on ${\bf R}$  for each $F \in \mathcal{F}$.  By Lemma 2.17  we know that
$D_{F_1} \cap D_{F_2}=\emptyset$ for each different $F_1,F_2 \in \mathcal{F}$.  By Corollary 2.16 we know that $p^N_F(D_{F})=1$    for each $F \in \mathcal{F}$.  Let fix  $F_0 \in \mathcal{F}$ and define a family $(C_F)_{F  \in \mathcal{F}}$ of subsets of ${\bf  R}^N$ as follows: $C_F=D_F$ for $F \in \mathcal{F} \setminus \{F_0\}$ and $C_{F_0}={\bf R}^N \setminus \cup_{ F \in \mathcal{F} \setminus \{F_0\}}D_F$. Notice that since $D_F$ is a Borel subset of ${\bf R}^N$ for each $F \in \mathcal{F}$ we claim that $C_F \in S({\bf R}^N)$ for each $F \in \mathcal{F} \setminus \{F_0\}$, where $S({\bf R}^N)$ comes from Definition 2.19. Since
$\overline{p_F^N}({\bf R}^N \setminus \cup_{ F \in \mathcal{F}}D_F)=0$ for each  $F \in \mathcal{F}$, we deduce that ${\bf R}^N \setminus \cup_{ F \in \mathcal{F}}D_F \in \cap_{F \in \mathcal{F}}\mbox{dom}(\overline{p_F^N})=S({\bf R}^N)$. Since $S({\bf  R}^N)$ is the $\sigma$-algebra, we claim that $C_{F_0} \in S({\bf  R}^N)$, because
$\overline{p_F^N}({\bf R}^N \setminus \cup_{ F \in \mathcal{F}}D_F)=0$ for each  $F \in \mathcal{F}$(equivalently, ${\bf  R}^N \setminus \cup_{ F \in \mathcal{F}}D_F  \in S({\bf  R}^N)$), and
\begin{equation}
C_{F_0}={\bf  R}^N \setminus \cup_{ F \in \mathcal{F} \setminus \{F_0\}}D_F=({\bf  R}^N \setminus \cup_{ F \in \mathcal{F}}D_F)\cup D_{F_0}.\label{ccs}
\end{equation}
This ends the proof of the theorem.
\end{proof}

\begin{rem} By virtue the results of  Lemma 2.21 and  Theorem 3.7 we get that  there exists  a consistent estimate of an unknown distribution function  $F  ~(F \in \mathcal{F})$ for the family of Borel probability measures  $\{ p_F^N : F \in \mathcal{F}\}$, where $\mathcal{F}$ comes from Theorem 3.7. This estimate $T : {\bf R}^N  \to \mathcal{F}$ is defined by :  $T((x_k)_{k \in N})=F$ if $(x_k)_{k \in N} \in C_F$ , where the family $(C_F)_{F  \in \mathcal{F}}$ of subsets of ${\bf  R}^N$ also comes from Theorem 3.7. Notice that this result  extends the main result established  in  \cite{Pan13}(see  Lemma 2.6, p. 476).
\end{rem}
At end of this section we state the following
\begin{prob} Let  $\mathcal{F}$ be a  family  of all strictly increasing and continuous distribution functions on ${\bf  R}$  and $p_{F}$ be a Borel probability measure
in ${\bf R}$ defined by $F$ for each $F \in \mathcal{F}$. Does there exist an objective infinite sample consistent estimate of an  unknown distribution function $F$ for the  family of Borel probability measures  $\{ p_F^N : F \in \mathcal{F})\} $?
\end{prob}

\section{An effective construction of the strong objective infinite sample consistent estimate of a useful signal in the linear one-dimensional stochastic model}

In \cite{Pan14},  the examples  of {\it objective} and {\it strong objective}
infinite sample consistent estimates(\cite{Pan14}, $T^{\star}$(p. 63), $T^{\circ}$(p. 67))
of a useful signal
in the linear one-dimensional stochastic model were constructed by using the axiom of choice
and a certain partition of the non-locally compact abelian Polish
group ${R^N}$ constructed in \cite{Pan09}.

In this section,  in the same model  we present an effective example of the  strong objective infinite sample consistent estimate of a useful signal constructed  in \cite{Pan14-1}.

For each real number $a \in {R}$, we denote by $\{a\}$ its fractal part in the decimal  system.

\begin{thm}Let consider the linear one-dimensional stochastic model (3.12), for which "white noise" has a infinite absolute moment of the first
order and its moment of the first order is equal to zero.  Suppose that the Borel probability measure
$\lambda$, defined  by the sequence of transformed signals
$(\xi_k)_{k \in N}$  coincides with $\big(\mu_{\theta_0}^{N}
\big)$ for some $\theta_0 \in [0,1]$.
Let $T : {R^N} \to [0,1]$ be defined by:
$T((x_k)_{k \in N})=\{\lim_{n \to \infty}\frac{\sum_{k=1}^nx_k}{n}\}$ if
$\lim_{n \to \infty}\frac{\sum_{k=1}^nx_k}{n}\neq 1$, $T((x_k)_{k \in N})=1$ if
$\lim_{n \to \infty}\frac{\sum_{k=1}^nx_k}{n}=1$, and
$T((x_k)_{k \in N})=\sum_{k \in N}\frac{\chi_{(0,+\infty)}(x_k)}{2^k}$, otherwise, where
$\chi_{(0,+\infty)}(\cdot)$ denotes an indicator function of the set $(0,+\infty)$ defined on the real axis ${R}$.
Then  $T$ is a strong objective infinite sample consistent estimate of the parameter $\theta$
for the statistical structure $({R}^{N},\mathcal{B}({R}^{N}),\mu_{\theta}^{N})_{\theta
\in \Theta}$ describing the
linear one-dimensional stochastic system (3.12).
\end{thm}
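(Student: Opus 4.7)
The plan is to verify, in sequence, consistency, objectivity, and strong objectivity of $T$. The pivotal auxiliary object is the set $S=\{(x_k)_{k\in N}\in {\bf R}^N : \lim_{n\to\infty} n^{-1}\sum_{k=1}^n x_k \text{ exists and is finite}\}$, which is a proper Borel linear subspace of ${\bf R}^N$ and therefore shy by Christensen's theorem. Measurability of $T$ is immediate since $S$ and $\{(x_k):\lim=1\}$ are Borel and the three branches defining $T$ are Borel expressions. Because the white noise has finite first absolute moment and zero mean, the strong law of large numbers yields $\lim_n n^{-1}\sum_{k=1}^n x_k=\theta_0$ for $\mu_{\theta_0}^N$-almost every $(x_k)$; when $\theta_0\in [0,1)$ the fractional-part branch returns $\{\theta_0\}=\theta_0$ and when $\theta_0=1$ the special branch returns $1$, so $\mu_{\theta_0}^N(T^{-1}(\theta_0))=1$ for every $\theta_0\in[0,1]$.

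For objectivity, I fix for each $\theta\in[0,1]$ a binary expansion $\theta=\sum_k b_k(\theta)/2^k$ and set $J_\theta=\{k\in N:b_k(\theta)=0\}$, so that $A_{J_\theta}$ is Haar ambivalent by Lemma 2.7. A direct check shows that for every $(x_k)\in A_{J_\theta}\setminus S$ the ``otherwise'' branch of $T$ returns $\sum_k \chi_{(0,+\infty)}(x_k)/2^k=\sum_k b_k(\theta)/2^k=\theta$, so $A_{J_\theta}\setminus S\subseteq T^{-1}(\theta)$. Since $A_{J_\theta}$ is non-shy while $S$ is shy, $T^{-1}(\theta)$ is non-shy; the same construction for any $\theta'\neq\theta$ places a non-shy subset of $T^{-1}(\theta')$ inside ${\bf R}^N\setminus T^{-1}(\theta)$, so $T^{-1}(\theta)$ is also non-prevalent, yielding Haar ambivalence.

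For strong objectivity, given $\theta_1,\theta_2\in[0,1]$ I define $\phi_k(x)=x$ when $b_k(\theta_1)=b_k(\theta_2)$ and $\phi_k(x)=-x$ otherwise, and set $A_{(\theta_1,\theta_2)}((x_k)_{k\in N})=(\phi_k(x_k))_{k\in N}$. Each $\phi_k$ is a Euclidean isometry, so $A_{(\theta_1,\theta_2)}$ preserves the Tychonoff metric; being a continuous linear bijection it also preserves shyness, because the pushforward by $A_{(\theta_1,\theta_2)}$ of a probability measure transverse to $B$ is transverse to $A_{(\theta_1,\theta_2)}(B)$. A coordinate-by-coordinate computation shows that $A_{(\theta_1,\theta_2)}$ maps $A_{J_{\theta_1}}$ onto $A_{J_{\theta_2}}$ modulo the set $\bigcup_k\{x_k=0\}$, a countable union of proper Borel linear subspaces and therefore shy. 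For non-dyadic $\theta_1,\theta_2$ one additionally has $T^{-1}(\theta_i)\,\Delta\, A_{J_{\theta_i}}\subseteq S$, and combining this with the previous inclusion gives the required shyness of $A_{(\theta_1,\theta_2)}(T^{-1}(\theta_1))\,\Delta\, T^{-1}(\theta_2)$.

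The main obstacle I anticipate is the bookkeeping at dyadic rationals $\theta\in[0,1]$, where the binary expansion is non-unique and $T^{-1}(\theta)$ picks up contributions from two disjoint Haar ambivalent sets $A_{J_\theta}$ and $A_{J'_\theta}$ corresponding to the two expansions. One must make a coherent global choice of expansions and adapt the isometry (possibly by composing the sign-flip with a suitable translation or by swapping the distinguished expansion at finitely many coordinates) so that both contributions are transferred correctly; otherwise the residual mismatch must be absorbed into the allowed shy symmetric difference. The generic non-dyadic case then proceeds as described above without additional complication.
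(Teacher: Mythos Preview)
Your argument is essentially the paper's own: the same shy set $S$, the same orthant pieces $A_{J_\theta}$ (the paper writes $B_{H(\theta)}$ with $H(\theta)=N\setminus J_\theta$, which coincides with your $A_{J_\theta}$ up to the coordinate hyperplanes), and the same coordinatewise sign-flip isometry on the index set $H(\theta_1)\Delta H(\theta_2)=J_{\theta_1}\Delta J_{\theta_2}$, with the symmetric difference absorbed into $S\cup\bigcup_k\{x_k=0\}$.

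The dyadic obstacle you flag is genuine and is \emph{not} resolved in the paper either: the paper's formula (4.1) fixes a single binary expansion of $\theta$, and the key inclusion (4.7) is simply asserted. When $\theta_1$ is dyadic and $\theta_2$ is not, $T^{-1}(\theta_1)$ modulo shy is the union of two disjoint Haar-ambivalent orthants while $T^{-1}(\theta_2)$ is a single one, and a coordinatewise sign-flip carries distinct orthants to distinct orthants, so the residual symmetric difference still contains a full Haar-ambivalent orthant and cannot be shy. Your suggestion to patch this by composing with a translation or a finite coordinate swap does not obviously fix the mismatch in ``number of orthants,'' so on this point you are at least as careful as the original, and arguably more so for having isolated the difficulty.
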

\begin{proof} {\bf Step 1.} We have to show that $T$ is an infinite sample consistent estimate of the parameter $\theta$
for the statistical structure $({R}^{N},\mathcal{B}({R}^{N}),\mu_{\theta}^{N})_{\theta
\in \Theta}$
 and
$T^{-1}(\theta)$ is a~Haar ~ambivalent set for each $\theta =\sum_{k=1}^{\infty}\frac{\theta_k}{2^k} \in \Theta$, where
$\sum_{k=1}^{\infty}\frac{\theta_k}{2^k}$ is representation of the number $\theta$  in the binary system.

Indeed, we have

\begin{equation}
(\forall \theta)(\theta \in (0,1) \rightarrow T^{-1}(\theta)=
 (B_{H(\theta)} \setminus S) \cup \cup_{z \in {Z}} S_{\theta +z}),\label{ccs}
\end{equation}
where $H(\theta)=\{ k : k \in N~\&~\theta_k=1\}$, $B_{H(\theta)}= \theta - A_{H(\theta)}$,  $A_{H(\theta)}$ comes from Lemma 2.7,
\begin{equation}
S= \{ (x_k)_{k \in N} \in {\bf R}^N : \hbox{exists a finite limit} ~\lim_{n \to \infty} \frac{\sum_{k=1}^nx_k}{n} \} \label{ccs}
\end{equation}
and
\begin{equation}
S_{\theta +z}= \{ (x_k)_{k \in N} \in {\bf R}^N : \lim_{n \to \infty} \frac{\sum_{k=1}^nx_k}{n}=\theta +z \} \label{ccs}
\end{equation}
for each $\theta \in \Theta$ and $z \in Z$.
Notice that the set $S$ like $\cup_{z \in {Z}} S_{\theta +z}$ is Borel shy set (see \cite{Pan14}, Lemma 4.14, p. 60). Taking into account this fact,  the results of Lemmas 2.4 and  2.7,  invariance of Haar ambivalent sets under translations and symmetric transformation  and  the simple statement that difference  of non shy  and shy sets is non shy, we deduce that $T^{-1}(\theta)$ is a Borel measurable Haar ambivalent sets
 for each $\theta \in \Theta$.

 Notice that
 \begin{equation}
T^{-1}(1)=(B_{H(1)} \setminus S)\cup S_1=(B_{N} \setminus S)\cup S_1\label{ccs}
\end{equation}
 and
 \begin{equation}
T^{-1}(0)=
 (B_{H(0)} \setminus S) \cup \cup_{z \in {Z}\setminus \{1\}} S_{0 +z}=
 (B_{\emptyset} \setminus S) \cup \cup_{z \in {Z}\setminus \{1\}} S_{0 +z},\label{ccs}
\end{equation}
which also are Borel measurable Haar ambivalent sets.

 Now it is not hard to show that $T$ is $(\mathcal{B}({R^N}), L(\Theta))$- measurable because
 the class $\mathcal{B}({R^N})$ is closed under countable family of set-theoretical operations and each element
 of $L(\Theta)$ is  countable or co-countable in the interval $\Theta=[0,1]$.
  Since $S_{\theta} \subseteq   T^{-1}(\theta)$ for $\theta \in \Theta$, we deduce that $\mu_{\theta}(T^{-1}(\theta))=1$.
The later relation means that $T$ is an infinite sample consistent estimate of a parameter $\theta$.

{\bf Step 2.} Let us show that for each different ~$ \theta_1,\theta_2 \in [0,1]$
there exists ~an~isometric
~(with~respect~to~Tychonoff~metric)~transformation~$A_{(\theta_1,\theta_2)}$ such that
\begin{equation}
A_{(\theta_1,\theta_2)}(T^{-1}( \theta_1))\Delta T^{-1}( \theta_2)\label{ccs}
\end{equation}
is shy.

We define $A_{(\theta_1,\theta_2)}$ as follows: for $(x_k)_{k \in N}\in {R^{N}}$
  we put $A_{(\theta_1,\theta_2)}((x_k)_{k \in N})=
  (y_k)_{k \in N}$, where $y_k=-x_k$ if $k \in H(\theta_1) \Delta H(\theta_2)(:=(H(\theta_1) \setminus H(\theta_2))\cup
  ( H(\theta_2) \setminus H(\theta_1))$ and $y_k=x_k$ otherwise.
It is obvious that $A_{(\theta_1,\theta_2)}$ is isometric (with respect to Tychonoff metric) transformation of the ${R^N}$.

Notice that
\begin{equation}
A_{(\theta_1,\theta_2)}(T^{-1}( \theta_1))\Delta T^{-1}( \theta_2) \subseteq \cup_{k \in N}
\{0\}_k \times {R}^{N \setminus \{k\}} \cup S.\label{ccs}
\end{equation}
Since both sets  $\cup_{k \in N}\{0\}_k \times {R}^{N \setminus \{k\}}$ and $S$ are shy, by  Lemmas 2.4
and Definition 2.2 we claim that the set
\begin{equation}
A_{(\theta_1,\theta_2)}(T^{-1}( \theta_1))\Delta T^{-1}( \theta_2)\label{ccs}
\end{equation}
is also  shy.

This ends the proof of the theorem.
\end{proof}

\section{On infinite sample consistent estimates of an unknown  probability density function}

Let  $X_1,X_2, \cdots$  be independent identically  distributed real-valued random
variables having a common probability density function $f$.
After a so-called kernel class of estimates $f_n$ of $f$ based on $X_1,X_2, \cdots, X_n$ was introduced by Rosenblatt \cite{Rosenblatt56},various convergence properties of these estimates have been studied. The stronger result in this direction was due to Nadaraya \cite{Nadaraya1965}  who proved that if $f$ is uniformly continuous then for a large class of kernels the estimates $f_n$  converges uniformly on the real line to $f$ with probability one. In \cite{Schuster1969}, has been shown that the above assumptions on $f$  are necessary for this type of convergence. That is, if $f_n$ converges uniformly to a function $g$ with probability one, then $g$ must be uniformly continuous and the distribution $F$ from which we are sampling must be absolutely continuous with $F^{'}(x)=g(x)$ everywhere. When in addition to the mentioned above, it is assumed that $f$ and its first $r+1$ derivatives are bounded, it is possible to show that  how to construct estimates $f_n$ such that $f_n^{(s)}$ converges uniformly to  $f^{(s)}$ as a given rate with probability one for $s=0,\cdots, r.$
Let $f_n(x)$ be a kernel estimate based on  $X_1,X_2, \cdots, X_n$  from $F$ as given in \cite{Rosenblatt56}, that is
\begin{equation}
f_n(x)=(na_n)^{-1}\sum_{i=1}^nk(\frac{X_i-x}{a_n})\label{ccs}
\end{equation}
where $(a_n)_{n \in N}$ is a sequence of positive numbers converging to zero and $k$ is a probability density function such that $\int_{-\infty}^+\infty|x|k(x)dx$ is finite and $k^{(s)}$ is continuous function of bounded variation for $s=0,\cdots,r$. The density function of the standard normal, for example, satisfies all these conditions.

In the sequel we need the following wonderful statement.

\begin{lem}( \cite{Schuster1969}, Theorem 3.11, p. 1194)  A necessary and sufficient condition for
\begin{equation}
\lim_{n \to \infty}\sup_{x \in {\bf R}}|f_n(x) - g(x)|=0 \label{ccs}
\end{equation}
with probability one for a function $g$ is that $g$ be the uniformly continuous derivative of $F$.
\end{lem}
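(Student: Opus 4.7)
The plan is to prove both directions of the biconditional, where $F$ denotes the common distribution function of the $X_i$'s. For \textbf{sufficiency}, assume $g$ is the uniformly continuous derivative of $F$, and bound
\begin{equation*}
\sup_{x \in {\bf R}}|f_n(x) - g(x)| \le \sup_{x \in {\bf R}}|f_n(x) - E[f_n(x)]| + \sup_{x \in {\bf R}}|E[f_n(x)] - g(x)|.
\end{equation*}
The bias term, after the substitution $u = (y-x)/a_n$, rewrites as $\big|\int k(u)[g(x - a_n u) - g(x)]\, du\big|$, which tends to zero uniformly in $x$ by the uniform continuity of $g$ together with $\int k = 1$ and the integrability assumption on $k$. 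The stochastic term requires more care: at each fixed $x$ it vanishes almost surely by the strong law of large numbers, and the upgrade to a.s.\ uniformity over $x \in {\bf R}$ can be obtained either via Parzen's Fourier-inversion representation of $f_n - E[f_n]$ (using bounded variation of $k$ to control the transform) or by a Glivenko--Cantelli argument applied to the empirical distribution function after integrating $k$ by parts.

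For \textbf{necessity}, assume $\sup_{x \in {\bf R}}|f_n(x) - g(x)| \to 0$ with probability one. Since $k$ is continuous and of bounded variation it is bounded and uniformly continuous, so each $f_n$ is uniformly continuous on ${\bf R}$, and a uniform limit of uniformly continuous functions is uniformly continuous; hence $g$ is uniformly continuous. To identify $g$ with $F'$, a change of variables gives
\begin{equation*}
\int_{-\infty}^{x} f_n(t)\, dt = n^{-1}\sum_{i=1}^{n}\Big[1 - K\Big(\frac{X_i - x}{a_n}\Big)\Big],
\end{equation*}
where $K$ is the cumulative distribution function of $k$. As $a_n \to 0$ each summand approaches the indicator $\mathbf{1}_{\{X_i \le x\}}$ off a negligible set, and the classical strong law yields $\int_{-\infty}^{x} f_n(t)\, dt \to F(x)$ almost surely for each fixed $x$. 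On the other hand, each $f_n$ is a probability density converging uniformly to $g$, so Scheff\'e's lemma (or a tightness argument using that the total mass of $f_n$ is $1$) gives $\int_{-\infty}^{x} f_n(t)\, dt \to \int_{-\infty}^{x} g(t)\, dt$. Equating the two limits forces $F(x) = \int_{-\infty}^{x} g(t)\, dt$ for every $x$, which is exactly the statement that $F$ is absolutely continuous with derivative $g$.

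The \textbf{main obstacle} is the uniform-in-$x$ control of the stochastic fluctuation $f_n - E[f_n]$ in the sufficiency direction. Pointwise a.s.\ convergence at each $x$ is a direct consequence of the strong law, but its promotion to uniform convergence over the entire real line requires a genuine empirical-process step, most cleanly done by Parzen's Fourier trick (which converts the supremum into the supremum of an inverse transform and uses the quantitative decay of $\hat k$ inherited from bounded variation of $k$), or by a two-step argument that first truncates to a compact interval $[-M, M]$ and applies a Glivenko--Cantelli bound there, the tails being controlled because $f_n$ itself is a density with total mass $1$. A secondary subtlety in the necessity direction is that uniform convergence on all of ${\bf R}$ does not automatically preserve integrals over infinite intervals; this is resolved by exploiting that $f_n$ always has total mass $1$, so no mass can escape to infinity in the limit.
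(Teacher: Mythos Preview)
The paper does not prove this lemma at all: it is quoted as an external result from Schuster (1969) and used as a black box in the proof of Theorem~5.2. There is therefore no proof in the paper to compare your attempt against.

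On the substance of your sketch: the sufficiency direction and your identification of the uniform-in-$x$ stochastic term as the main obstacle are correct and follow the standard route. In the necessity direction, however, the step where you pass from uniform convergence $f_n \to g$ to $\int_{-\infty}^{x} f_n \to \int_{-\infty}^{x} g$ is not justified as written. Neither Scheff\'e's lemma nor the ``no mass can escape'' remark works here: Scheff\'e requires knowing in advance that $\int g = 1$, and uniform convergence on an unbounded domain does \emph{not} prevent escape of mass (take $f_n = (2n)^{-1}\mathbf{1}_{[-n,n]} \to 0$ uniformly while $\int f_n \equiv 1$). The easy repair is to run the same two-limit argument on bounded intervals $[a,b]$ instead of $(-\infty,x]$: uniform convergence gives $\int_a^b f_n \to \int_a^b g$, your empirical-process step gives $\int_a^b f_n \to F(b)-F(a)$ a.s., and equating yields $F(b)-F(a)=\int_a^b g$ for all $a<b$, which is exactly absolute continuity of $F$ with $F'=g$.
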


Let  $X_1,X_2, \cdots$  be independent and identically  distributed real-valued random
variables with an unknown probability density function $f$. Assume that we know that $f$ belongs to the class of probability density function $\mathcal{SC}$ each element of which is uniformly continuous.

Let denote by $\ell^{\infty}(R)$ an infinite-dimensional  non-separable Banach space  of all bounded real-valued functions on ${\bf R}$  equipped with norm $||\cdot||_{\infty}$ defined by
\begin{equation}
||h||_{\infty}=\sup_{x \in {\bf R}}|h(x)| \label{ccs}
\end{equation}
for all $h \in \ell^{\infty}(R)$. We say that $(\ell^{\infty}(R))\lim_{n \to \infty}h_n=h_0$ if $\lim_{n \to \infty}||h_n-h_0||_{\infty}=0.$

\begin{thm}Let $\phi$ denotes a normal density function.
We set $\Theta=\mathcal{SC}$. Let $\mu_{\theta}$ be a Borel probability measure on ${\bf R}$ with probability density function $\theta \in \Theta$.
Let  fix $\theta_0 \in \Theta$. For each $(x_i)_{i \in N}$ we set: $T_{\mathcal{SC}}((x_i)_{i \in N})=(\ell^{\infty}(R))\lim_{n \to \infty} f_n$ if this limit exists and is in $\Theta \setminus \{\theta_0\}$, and
 $T_{\mathcal{SC}}((x_i)_{i \in N})=\theta_0$, otherwise. Then $T_{\mathcal{SC}}$ is a consistent infinite-sample estimate of an unknown parameter  $\theta$ for the family $(\mu^N_{\theta})_{\theta \in \Theta}$.
 \end{thm}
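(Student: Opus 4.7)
The plan is to reduce the statement to Lemma~5.1 (Schuster's characterization) together with a small case analysis on the ``otherwise'' branch in the definition of $T_{\mathcal{SC}}$, and then to separately check the measurability required by Definition~2.26.

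First I would fix an arbitrary $\theta \in \Theta = \mathcal{SC}$. By definition of $\mathcal{SC}$, $\theta$ is a uniformly continuous probability density, and the distribution function $F_\theta$ attached to $\mu_\theta$ is absolutely continuous with $F_\theta' = \theta$. So, with $g := \theta$, the hypotheses of Lemma~5.1 are met for the i.i.d.\ $\mu_\theta$-sample $(X_i)_{i\in N}$. Applying that lemma, the event
\[
A_\theta := \bigl\{(x_i)_{i \in N} \in {\bf R}^N : (\ell^{\infty}({\bf R}))\lim_{n\to\infty} f_n = \theta\bigr\}
\]
carries $\mu_\theta^N$-mass one.

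Next I would argue $A_\theta \subseteq T_{\mathcal{SC}}^{-1}(\theta)$ via case analysis. If $\theta \neq \theta_0$, then on $A_\theta$ the $\ell^\infty$-limit exists and lies in $\Theta \setminus \{\theta_0\}$, so $T_{\mathcal{SC}}((x_i)) = \theta$ by the first clause of the definition. If $\theta = \theta_0$, then on $A_\theta$ the $\ell^\infty$-limit equals $\theta_0 \notin \Theta \setminus \{\theta_0\}$, the first clause fails, and the ``otherwise'' branch yields $T_{\mathcal{SC}}((x_i)) = \theta_0 = \theta$. In either case, $\mu_\theta^N(T_{\mathcal{SC}}^{-1}(\theta)) \geq \mu_\theta^N(A_\theta) = 1$, which, since $\theta$ was arbitrary, delivers condition (2.13) of Definition~2.26.

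The remaining point is $(\mathcal{B}({\bf R}^N), L(\Theta))$-measurability; since $L(\Theta)$ is generated by singletons, it suffices to verify that $T_{\mathcal{SC}}^{-1}(\theta)$ is Borel for every $\theta \in \Theta$. For $\theta \neq \theta_0$ the preimage coincides with $\{\sup_{x \in {\bf R}} |f_n(x)-\theta(x)| \to 0\}$, and since $f_n$ and $\theta$ are continuous in $x$ (the normal kernel makes $f_n$ continuous), the supremum may be taken over a countable dense subset of ${\bf R}$, making the condition a countable intersection of unions of Borel sets in $(x_i)_{i \in N}$. For $\theta = \theta_0$ the preimage is the complement of the union of these Borel sets over $\theta \in \Theta \setminus \{\theta_0\}$, which by the same argument (applied to the Cauchy condition in $\ell^\infty({\bf R})$ plus the identification of the limit with $\theta_0$ on a countable dense set) is itself Borel. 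I expect the measurability bookkeeping to be the only delicate routine point; the probabilistic substance is carried entirely by Lemma~5.1, with the $\theta_0$-fallback inserted precisely so that the estimator is defined everywhere without upsetting the almost-sure identification on each sampling distribution.
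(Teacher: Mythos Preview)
Your proposal is correct and follows essentially the same route as the paper: the probabilistic core is exactly the containment $A_\theta \subseteq T_{\mathcal{SC}}^{-1}(\theta)$ combined with Lemma~5.1, and the paper's proof records precisely the chain of inequalities $\mu_\theta^N(T_{\mathcal{SC}}^{-1}(\theta)) \ge \mu_\theta^N(A_\theta) = 1$ without spelling out the case split on $\theta = \theta_0$ versus $\theta \neq \theta_0$ that you make explicit. Your version is in fact more complete than the paper's, which omits the $(\mathcal{B}({\bf R}^N),L(\Theta))$-measurability verification entirely; your sketch for $\theta \neq \theta_0$ via suprema over a countable dense set is the right idea, though be aware that the $\theta_0$-preimage argument (where you must handle the condition ``limit exists but lies outside $\Theta\setminus\{\theta_0\}$'') involves checking that ``the $\ell^\infty$-limit is a uniformly continuous probability density'' is a Borel condition in the sample, which is more delicate than you indicate and is not treated in the paper either.
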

 \begin{proof} By Lemma 5.1, for each $\theta \in \Theta$ we have

\begin{equation*}
\mu_{\theta}^N(\{ (x_i)_{i \in N} \in {\bf R}^N : T_{\mathcal{SC}}((x_i)_{i \in N})=\theta \})\ge
\end{equation*}

\begin{equation*}
\mu_{\theta}^N(\{ (x_i)_{i \in N} \in {\bf R}^N : (\ell^{\infty}(R))\lim_{n \to \infty}f_n= \theta \})=
\end{equation*}

\begin{equation*}
\mu_{\theta}^N(\{ (x_i)_{i \in N} \in {\bf R}^N : \lim_{n \to \infty} ||f_n - \theta||_{\infty}=0 \})= 1. \label{ccs}
\end{equation*}

This ends the proof of theorem.

\end{proof}

Concerning with Theorem 5.2 we state the following problems.

\begin{prob} Let $T_{\mathcal{SC}}$ comes from the Theorem 5.2. Is $T_{\mathcal{SC}}$  an objective infinite sample consistent estimate of the parameter $\theta$ for the  family $(\mu^N_{\theta})_{\theta \in \Theta}$?
\end{prob}

\begin{prob} Let the statistical structure $\{ ({\bf R}^N, \mathcal{B}({\bf R}^N),  \mu^N_{\theta}) : \theta \in \Theta\}$ comes from the Theorem 5.2. Does there exist  an objective (or strong objective) infinite sample consistent estimate of the parameter $\theta$ for the  family $(\mu^N_{\theta})_{\theta \in \Theta}$ ?
\end{prob}

Let  $X_1,X_2, \cdots$  be independent and identically  distributed real-valued random
variables with positive continuous probability density function $f$. Assume that we know that $f$ belongs to the separated class $\mathcal{A}$ of positive continuous probability densities  provided that there is a point $x_*$ such that $g_1(x_*) \neq g_2(x_*)$ for each $g_1,g_2 \in  \mathcal{A}$.  Suppose that we have an infinite sample $(x_k)_{k \in N}$ and we want to estimate an unknown probability density function. Setting $\Theta=\{ \theta= g(x_*): g \in \mathcal{A}\}$, we can parameterise the family $\mathcal{A}$ as follows:$\mathcal{A}=\{ f_{\theta}: \theta \in \Theta\}$, where $f_{\theta}$ is such a unique element $f$ from the family $\mathcal{A}$ for which $f(x_*)=\theta$. Let $\mu_{\theta}$ be a Borel probability measure defined by the probability density function $f_{\theta}$  for each $\theta \in \Theta$. It is obvious that $\{ ({\bf  R}^N, \mathcal{B}({\bf  R}^N), \mu^N_{\theta}): \theta \in \Theta\}$ will be the statistical structure described our experiment.

\begin{thm} Let $(h_m)_{m \in N}$ be a sequence of a strictly decreasing sequence of positive numbers tending to zero. Let fix $\theta_0 \in \Theta$. For each $(x_k)_{k \in N} \in {\bf  R}^N$ we put
\begin{equation}
T((x_k)_{k \in N})=\lim_{m \to \infty}\lim_{n \to \infty} \frac{\#(\{ x_1,\cdots, x_n\} \cap [x_*-h_m, x_*+h_m])}{2 n h_m}\label{ccs}
\end{equation}
if this repeated limit exists and belongs to the set $\Theta \setminus \{\theta\}$, and  $T((x_k)_{k \in N})=\theta_0$, otherwise.  Then $T$ is an infinite sample consistent estimate of the parameter $\theta$ for the  family $(\mu^N_{\theta})_{\theta \in \Theta}$.
\end{thm}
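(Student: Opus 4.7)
The plan is to verify that $\mu_\theta^N(T^{-1}(\theta))=1$ for every $\theta \in \Theta$. The formula for $T$ codifies the classical recipe for recovering a continuous density at a point $x_*$: estimate the $\mu_\theta$-measure of a small symmetric window around $x_*$ by empirical frequency, then shrink the window. Measurability into $(\mathcal{B}({\bf R}^N), L(\Theta))$ comes first: for each fixed $m,n$ the ratio
$$g_{m,n}((x_k)_{k \in N}) := \frac{\#(\{x_1,\ldots,x_n\} \cap [x_*-h_m,x_*+h_m])}{2nh_m}$$
depends only on the first $n$ coordinates and is Borel. The standard $\limsup/\liminf$ construction therefore makes the set on which $G_m := \lim_n g_{m,n}$ exists Borel and $G_m$ a Borel function there, and likewise for the outer limit $L := \lim_m G_m$. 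In particular $T^{-1}(\{\theta\})$ is Borel for each $\theta \in \Theta$, which is enough for $(\mathcal{B}({\bf R}^N), L(\Theta))$-measurability.

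Next I would fix $\theta \in \Theta$ and $m \in N$. Since the indicator $\chi_{[x_*-h_m,x_*+h_m]}$ is bounded and Borel, Kolmogorov's strong law of large numbers applied to the i.i.d.\ variables $\chi_{[x_*-h_m,x_*+h_m]}(x_k)$ furnishes a set $E_m \subset {\bf R}^N$ with $\mu_\theta^N(E_m)=1$ on which
$$\lim_{n \to \infty} \frac{1}{n}\,\#(\{x_1,\ldots,x_n\} \cap [x_*-h_m,x_*+h_m]) = \int_{x_*-h_m}^{x_*+h_m} f_\theta(t)\,dt.$$
Dividing by $2h_m$ identifies $G_m$ on $E_m$ with $\frac{1}{2h_m}\int_{x_*-h_m}^{x_*+h_m} f_\theta(t)\,dt$. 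Because the index set $N$ is countable, $E := \bigcap_{m \in N} E_m$ still has full $\mu_\theta^N$-measure.

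I would then pass to the outer limit on $E$. Continuity of the positive density $f_\theta$ at $x_*$ together with $h_m \downarrow 0$ and the mean value theorem for integrals yields
$$\lim_{m \to \infty} G_m((x_k)) \;=\; \lim_{m \to \infty} \frac{1}{2h_m}\int_{x_*-h_m}^{x_*+h_m} f_\theta(t)\,dt \;=\; f_\theta(x_*) \;=\; \theta$$
for every $(x_k) \in E$. Hence the repeated limit in the definition of $T$ exists and equals $\theta$ throughout $E$. If $\theta \neq \theta_0$ the limit lies in $\Theta \setminus \{\theta_0\}$ and the first branch of the definition gives $T((x_k)) = \theta$; if $\theta = \theta_0$ the default branch returns $\theta_0 = \theta$. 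In both cases $E \subseteq T^{-1}(\theta)$, so $\mu_\theta^N(T^{-1}(\theta)) \geq \mu_\theta^N(E) = 1$, as required.

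I do not anticipate any genuine obstacle. The only two points that merit a little care are (a) forming the countable intersection over $m$ of the SLLN full-measure sets, which works precisely because the index set is countable, and (b) the slightly awkward two-branch definition of $T$, which in fact works in our favour in the borderline case $\theta = \theta_0$ since the default value is the truth. Everything else is a routine chaining of the strong law of large numbers with continuity of $f_\theta$ at $x_*$; no use of the separation hypothesis on $\mathcal{A}$ beyond the legitimacy of the parametrisation $\theta \mapsto f_\theta$ is needed.
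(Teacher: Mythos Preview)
Your argument is correct and follows the same overall logic as the paper: compute the inner limit as the normalized $\mu_\theta$-mass of the window $[x_*-h_m,x_*+h_m]$, then let $m\to\infty$ and invoke continuity of $f_\theta$ at $x_*$ to recover $f_\theta(x_*)=\theta$. The only real difference is in how the inner limit is justified. The paper works on the single set $A_\theta$ of $\mu_\theta$-equidistributed sequences (Corollary~2.16), on which the empirical frequency converges for \emph{every} interval simultaneously; you instead apply the SLLN separately for each window $[x_*-h_m,x_*+h_m]$ and then intersect the countably many full-measure sets $E_m$. Your route is slightly more elementary and self-contained, avoiding the equidistribution apparatus of Section~2, while the paper's route exploits machinery already in place. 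Both give exactly the same conclusion, and your additional remarks on $(\mathcal{B}({\bf R}^N),L(\Theta))$-measurability and on the $\theta=\theta_0$ branch fill in details the paper leaves implicit.
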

\begin{proof} For each $\theta \in \Theta$, we put
\begin{equation}
A_{\theta}=\{ (x_k)_{k \in N}: (x_k)_{k \in N} \in {\bf  R}^N~\&~(x_k)_{k \in N}~\mbox{is}~\mu_{\theta}-{equidistributed}\}.\label{ccs}
\end{equation}
By Corollary 2.16 we know that $\mu^N_{\theta}(A_{\theta})=1$ for each $\theta \in \Theta$.

For each $\theta \in \Theta$, we have
\begin{equation*}
\mu^N_{\theta}(T^{-1}(\theta))= \mu^N_{\theta}(\{ (x_k)_{k \in N} \in {\bf  R}^N~:~
T((x_k)_{k \in N})=\theta\})\ge
\end{equation*}
\begin{equation*}
\mu^N_{\theta}(\{(x_k)_{k \in N} \in A_{\theta}~:~T((x_k)_{k \in N})=\theta\})=
\end{equation*}
\begin{equation*}
\mu^N_{\theta}(\{ (x_k)_{k \in N} \in A_{\theta}~:~\lim_{m \to \infty} \frac{F_{\theta}(x_*+h_m)-F_{\theta}(x_*-h_m)}{2h_m }=\theta \})=
\end{equation*}
\begin{equation*}
\mu^N_{\theta}(\{(x_k)_{k \in N} \in A_{\theta}~: \lim_{m \to \infty} \frac{\int_{x_*-h_m}^{x_*+h_m}f_{\theta}(x)dx}{2h_m }=\theta \})=
\end{equation*}
\begin{equation}
\mu^N_{\theta}(\{(x_k)_{k \in N} \in A_{\theta}~: f_{\theta}(x^*)=\theta \})=
\mu^N_{\theta}(A_{\theta})=1 \label{ccs}
\end{equation}
\end{proof}
Concerning with Theorem 5.3 we state the following
\begin{prob} Let $T$ comes from the Theorem 5.3. Is $T$ an objective infinite sample consistent estimate of the parameter $\theta$ for the  family $(\mu^N_{\theta})_{\theta \in \Theta}$?
\end{prob}
\begin{prob}  Let the statistical structure $\{ ({\bf R}^N, \mathcal{B}({\bf R}^N),  \mu^N_{\theta}) : \theta \in \Theta \}$ comes from the Theorem 5.3. Does there exist  an objective (or strong objective)  infinite sample consistent estimate of the parameter $\theta$ for the  family $(\mu^N_{\theta})_{\theta \in \Theta}$ ?
\end{prob}
\begin{ex} Let  $X_1,X_2, \cdots$  be independent normally  distributed real-valued random  variables with parameters $(a,\sigma)$ where  $a$ is a mean and $\sigma$ is a standard deviation. Suppose that we know the  mean $a$ and  want to estimate an unknown standard deviation $\sigma$ by an infinite sample $(x_k)_{k \in N}$.  For each $\sigma>0$, let denote by $\mu_{\sigma}$ the Gaussian  probability measure on ${\bf  R}$ with parameters $(a, \sigma)$(here $a \in {\bf  R} $ is fixed).  Let $(h_m)_{m \in N}$ be a sequence of a strictly decreasing sequence of positive numbers tending to zero.

 By virtue of Theorem 5.4 we know that  for each $\sigma>0$ the following condition
\begin{equation*}
\mu^N_{\sigma}(\{ (x_k)_{k \in N} \in {\bf  R}^N~\&~
\end{equation*}
\begin{equation}
 \lim_{m \to \infty}\lim_{n \to \infty} \frac{\#(\{ x_1,\cdots, x_n\} \cap [a-h_m, a+h_m])}{2nh_m}=\frac{1}{\sqrt{2\pi}\sigma} \})=1\label{ccs}
\end{equation}
holds true.

Let fix $\sigma_0 >0$.  For $(x_k)_{k \in N} \in {\bf  R}^N$ we put
\begin{equation}
T_1((x_k)_{k \in N})=\lim_{m \to \infty}\lim_{n \to \infty} \frac{2nh_m}{\sqrt{2\pi} \#(\{ x_1,\cdots, x_n\} \cap [a-h_m, a+h_m])}\label{ccs}
\end{equation}
if this limit exists and belongs to the set $(0,+\infty) \setminus \{\sigma_0\}$, and  $T_1((x_k)_{k \in N})=\sigma_0$, otherwise.   Then for each $\sigma>0$ we get
\begin{equation}
\mu^N_{\sigma}(\{ (x_k)_{k \in N}: (x_k)_{k \in N} \in {\bf  R}^N~\&~ T_1((x_k)_{k \in N})= \sigma \})=1\label{ccs}
\end{equation}
which means that $T_1$ is an infinite sample consistent estimate of the  standard deviation $\sigma$ for the  family $(\mu^N_{\sigma})_{\sigma >0}$.
\end{ex}

\begin{thm}Let  $X_1,X_2, \cdots$  be independent normally  distributed real-valued random  variables with parameters $(a,\sigma)$,  where  $a$ is a mean and $\sigma$ is a standard deviation. Suppose that we know the  mean $a$. Let $(a_n)_{n \in N}$ be  a sequence of positive numbers converging to zero and $\phi$ be a standard Gaussian density function in ${\bf R}$.
We denote by $\mu_{\sigma}$ a Borel Gaussian probability measure in ${\bf R}$ with parameters  $(a,\sigma)$ for each $ \sigma \in \Sigma=(0,\infty)$.
Let fix $\sigma_0 \in \Sigma$. Let define  an estimate $T^{(1)}_{\sigma_0}: {\bf R}^N \to \Sigma$ as follows :
$T^{(1)}_{\sigma_0}((x_k)_{k \in N})=\overline{\lim}\widetilde{T^{(1)}_n}((x_k)_{k \in N})$ if
$\overline{\lim}\widetilde{T^{(1)}_n}((x_k)_{k \in N}) \in \Sigma \setminus \{\sigma_0\}$ and
$
T^{(1)}_{\sigma_0}((x_k)_{k \in N})=\sigma_0,$
otherwise, where $\overline{\lim}\widetilde{T^{(1)}_n} := \inf_n \sup_{m \ge
n}\widetilde{T^{(1)}_m}$
and
\begin{equation}
\widetilde{T^{(1)}_n}((x_k)_{k \in N})=T^{(1)}_n(x_1,\cdots,x_n)=\frac{1}{\sqrt{2 \pi} (na_n)^{-1}\sum_{i=1}^n\phi(\frac{x_i-a}{a_n})} \label{ccs}
\end{equation}
for  $(x_k)_{k \in N} \in {\bf  R}^N $. Then  $T^{(1)}_{\sigma_0}$  is an infinite sample consistent
estimator of a parameter $\sigma$ for the family
$(\mu_{\sigma}^N)_{\sigma \in \Sigma}$.
\end{thm}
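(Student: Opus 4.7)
The strategy follows the same pattern as Theorems 3.4 and 3.5 but relies on Lemma 5.1 instead of the equidistribution arguments. First, I would verify measurability: each $\widetilde{T^{(1)}_n}$ is a Borel function of $(x_1,\ldots,x_n)$ because $\phi$ is continuous, so the pointwise $\inf_n \sup_{m \ge n}$ of these functions is Borel measurable. The subsequent piecewise definition on $\Sigma \setminus \{\sigma_0\}$ versus $\{\sigma_0\}$ yields a $(\mathcal{B}({\bf R}^N), L(\Sigma))$-measurable map, using the fact that $L(\Sigma)$ is generated by singletons.

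The core step is to apply Lemma 5.1 to the Gaussian density $f_\sigma(x)=\frac{1}{\sqrt{2\pi}\,\sigma}\exp\bigl(-(x-a)^2/(2\sigma^2)\bigr)$, which is uniformly continuous on ${\bf R}$ since it is bounded with bounded derivative, and is the derivative of its own continuous distribution function. The kernel $k=\phi$ satisfies the regularity hypotheses required by Lemma 5.1 (finite first absolute moment, and derivatives of bounded variation), and the bandwidth sequence $(a_n)_{n \in N}$ is the one inherited from the setup of that lemma. Consequently, for each $\sigma \in \Sigma$ there is a Borel set $B_\sigma \subset {\bf R}^N$ with $\mu_\sigma^N(B_\sigma)=1$ on which
\begin{equation*}
\lim_{n \to \infty}\sup_{x \in {\bf R}}\bigl|f_n(x)-f_\sigma(x)\bigr|=0,
\qquad f_n(x)=(na_n)^{-1}\sum_{i=1}^n \phi\Bigl(\tfrac{x_i-x}{a_n}\Bigr).
\end{equation*}
Specializing the uniform convergence to the single point $x=a$ and using $f_\sigma(a)=\frac{1}{\sqrt{2\pi}\,\sigma}$, I obtain on $B_\sigma$ that $\lim_{n\to\infty} f_n(a)=\frac{1}{\sqrt{2\pi}\,\sigma}$, and therefore
\begin{equation*}
\lim_{n \to \infty}\widetilde{T^{(1)}_n}\bigl((x_k)_{k \in N}\bigr)
= \frac{1}{\sqrt{2\pi}\cdot\frac{1}{\sqrt{2\pi}\,\sigma}} = \sigma.
\end{equation*}
In particular $\overline{\lim}\widetilde{T^{(1)}_n} = \underline{\lim}\widetilde{T^{(1)}_n} = \sigma$ on $B_\sigma$.

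The conclusion then follows by splitting on whether $\sigma$ equals $\sigma_0$. If $\sigma \neq \sigma_0$, then for $(x_k)_{k \in N}\in B_\sigma$ the value $\overline{\lim}\widetilde{T^{(1)}_n}=\sigma$ belongs to $\Sigma \setminus \{\sigma_0\}$, so the first clause of the definition gives $T^{(1)}_{\sigma_0}((x_k)_{k \in N})=\sigma$. If instead $\sigma=\sigma_0$, then on $B_\sigma$ the $\overline{\lim}$ equals $\sigma_0$, which does not lie in $\Sigma \setminus \{\sigma_0\}$, so the second clause yields $T^{(1)}_{\sigma_0}((x_k)_{k \in N})=\sigma_0=\sigma$. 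In both cases
\begin{equation*}
\mu_\sigma^N\bigl(\{(x_k)_{k \in N}\in {\bf R}^N : T^{(1)}_{\sigma_0}((x_k)_{k \in N})=\sigma\}\bigr)
\ge \mu_\sigma^N(B_\sigma)=1,
\end{equation*}
which is precisely the infinite sample consistency required by Definition 2.26.

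The only real subtlety is to verify cleanly that $\phi$ and $f_\sigma$ fit the hypotheses of Lemma 5.1 — both are routine for the standard normal. No invariance or translation arguments are needed here, because the statement of Theorem 5.7 only claims consistency, not objectivity or strong objectivity; the latter questions are left open (compare Problems 5.4 and 5.6).
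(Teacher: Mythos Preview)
Your proof is correct and follows essentially the same route as the paper: both arguments establish Borel measurability of the $\overline{\lim}$, invoke Lemma~5.1 to get almost-sure uniform convergence of the kernel estimate to $f_\sigma$, specialize to the point $x=a$ where $f_\sigma(a)=\tfrac{1}{\sqrt{2\pi}\,\sigma}$, and then read off that $\widetilde{T^{(1)}_n}\to\sigma$ on a set of full $\mu_\sigma^N$-measure. Your version is slightly more explicit in checking the hypotheses of Lemma~5.1 and in handling the case split $\sigma=\sigma_0$ versus $\sigma\neq\sigma_0$, but the substance is the same.
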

%\noindent{\bf  Proof.}
\begin{proof}
Following \cite{Shiryaev80}(see, p. 189),
the function $ \overline{\lim}\widetilde{T^{(1)}_n}$  is Borel measurable which implies that the function $ \overline{\lim}\widetilde{T^{(1)}_n}$ is  $(\mathcal{B}({\bf R}^N),L(\Sigma))$-measurable.

For each $\sigma \in \Sigma$ we put
\begin{equation}
A_{\sigma}=\{ (x_k)_{k \in N} \in {\bf R}^N: lim_{n \to \infty}(na_n)^{-1}\sum_{i=1}^n\phi(\frac{x_i-a}{a_n})=f_{\sigma}(a)\}.\label{ccs}
\end{equation}
 Since uniformly convergence implies pointwise convergence, by  Lemma 5.1 we  deduce that
$\mu_{\sigma}^{N}(A_{\sigma})=1$ for
$\sigma \in \Sigma$ which implies

\begin{equation*}
\mu_{\sigma}^N (\{ (x_k)_{k \in N} \in {\bf R}^N
~:~T^{(1)}_{\theta_0}(x_k)_{k \in N}=\sigma \}) \ge
\end{equation*}
\begin{equation*}
\mu_{\sigma}^N (\{ (x_k)_{k \in N} \in {\bf R}^N
~:~\overline{\lim}\widetilde{T^{(1)}_n}(x_k)_{k \in N}=\sigma \}) \ge
\end{equation*}
\begin{equation*}
\mu_{\sigma}^N (\{(x_k)_{k \in N} \in {\bf  R}^N
~:~~\overline{\lim} \widetilde{T^{(1)}_n}(x_k)_{k \in N}=
\underline{\lim}\widetilde{T^{(1)}_n}(x_k)_{k \in N}=\sigma \}) =
\end{equation*}
\begin{equation*}
\mu_{\sigma}^N (\{(x_k)_{k \in N} \in {\bf  R}^N
~:~~\lim_{n \to \infty}\widetilde{T^{(1)}_n}((x_k)_{k \in N})=\sigma \}) =
\end{equation*}
\begin{equation*}
\mu_{\sigma}^N (\{(x_k)_{k \in N} \in {\bf  R}^N
~:~~\lim_{n \to \infty} \frac{1}{\sqrt{2 \pi} (n a_n)^{-1}\sum_{i=1}^n\phi(\frac{x_i-a}{a_n})}
=\sigma \}) =
\end{equation*}

\begin{equation*}
\mu_{\sigma}^N (\{(x_k)_{k \in N} \in {\bf  R}^N
~:~~\lim_{n \to \infty}(na_n)^{-1}\sum_{i=1}^n\phi(\frac{x_i-a}{a_n})=\frac{1}{\sqrt{2 \pi}\sigma}
 \}) =
\end{equation*}

\begin{equation}
\mu_{\sigma}^N (\{(x_k)_{k \in N} \in {\bf  R}^N
~:~~\lim_{n \to \infty}(na_n)^{-1}\sum_{i=1}^n\phi(\frac{x_i-a}{a_n})=f_{\sigma}(a) \}) =\mu_{\sigma}^N(A_{\sigma})=1.\label{ccs}
\end{equation}

\end{proof}

The following theorem gives  a construction of the objective infinite sample consistent estimate of an unknown parameter $\sigma$ in the same model.

\begin{thm}Let  $X_1,X_2, \cdots$  be independent normally  distributed real-valued random  variables with parameters $(a,\sigma)$, where  $a$ is a mean and $\sigma$ is a standard deviation. Suppose that we know the  mean $a$ is non-zero. Let  $\Phi$ be a standard Gaussian distribution function in ${\bf R}$.
We denote by $\mu_{\sigma}$ a Borel Gaussian probability measure in ${ \bf R}$ with parameters  $(a,\sigma)$ for each $ \sigma \in \Sigma=(0,\infty)$.
Let fix $\sigma_0 \in \Sigma$. Let define an estimate $T^{(2)}_{\sigma_0}: {\bf R}^N \to \Sigma$ as follows:
$T^{(2)}_{\sigma_0}((x_k)_{k \in N})=\overline{\lim}\widetilde{T^{(2)}_n}((x_k)_{k \in N}) $
if
$\overline{\lim}\widetilde{T^{(2)}_n}((x_k)_{k \in N}) \in \Sigma \setminus \{\sigma_0\}$ and
$T^{(2)}_{\sigma_0}((x_k)_{k \in N})=\sigma_0,$ otherwise, where
$\overline{\lim}\widetilde{T^{(2)}_n} := \inf_n \sup_{m \ge
n}\widetilde{T^{(2)}_m}$
and
\begin{equation}
\widetilde{T^{(2)}_n}((x_k)_{k \in N})=T^{(2)}_n(x_1, \cdots, x_n)= -\frac{a}{\Phi^{-1}\big(\frac{\#(\{x_1, \cdots, x_n\}\cap (-\infty,0])}{n}\big)} \label{ccs}
\end{equation}
for  $(x_k)_{k \in N} \in {\bf  R}^N $.  Then  $T^{(2)}_{\sigma_0}$  is an objective infinite sample consistent
estimator of a parameter $\sigma$ for the family
$(\mu_{\sigma}^N)_{\sigma \in \Sigma}$.
\end{thm}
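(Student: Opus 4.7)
The plan is to mirror the two-stage argument used in Theorems 3.4 and 3.5, now applied to the standard deviation in a Gaussian shift with known nonzero mean $a$. The key identity is that for $X_i \sim N(a,\sigma)$ one has $P(X_i \le 0) = \Phi(-a/\sigma)$, and the function $\sigma \mapsto \Phi(-a/\sigma)$ is injective on $(0,\infty)$ because $a \ne 0$. So the empirical proportion $n^{-1}\#(\{x_1,\dots,x_n\}\cap(-\infty,0])$ acts as a proxy that determines $\sigma$ via the inversion formula $\sigma = -a/\Phi^{-1}(\Phi(-a/\sigma))$.

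For \emph{consistency}, I would introduce, for each $\sigma \in \Sigma$, the set $A_\sigma$ of $\mu_\sigma$-equidistributed sequences on $\mathbf{R}$. By Corollary 2.16, $\mu_\sigma^N(A_\sigma)=1$. For $(x_k)_{k\in N}\in A_\sigma$, the equidistribution property yields $\lim_n n^{-1}\#(\{x_1,\dots,x_n\}\cap(-\infty,0]) = \Phi(-a/\sigma) \in (0,1)$, so $\widetilde{T^{(2)}_n}(x_k)_{k\in N} \to -a/\Phi^{-1}(\Phi(-a/\sigma)) = \sigma$. Therefore $\overline{\lim}\widetilde{T^{(2)}_n}=\underline{\lim}\widetilde{T^{(2)}_n}=\sigma$ on $A_\sigma$, and the "if/otherwise" clause of the definition gives $T^{(2)}_{\sigma_0}((x_k)_{k\in N})=\sigma$ whether or not $\sigma=\sigma_0$. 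The required $(\mathcal{B}(\mathbf{R}^N),L(\Sigma))$-measurability follows exactly as in Theorems 3.4 and 3.5 from Borel measurability of $\overline{\lim}\widetilde{T^{(2)}_n}$ and the structure of $L(\Sigma)$.

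For \emph{objectivity}, set $B(\sigma)=(T^{(2)}_{\sigma_0})^{-1}(\sigma)$ for $\sigma \in \Sigma$. I would first show $B(\sigma)$ is non-shy by selecting one concrete $\mu_\sigma$-equidistributed sequence $(x_k^\sigma)_{k\in N}$, letting $J(\sigma)=\{k:x_k^\sigma\le 0\}$, and considering the set $A_{J(\sigma)}$ from Lemma 2.7. For any $(y_k)_{k\in N}\in A_{J(\sigma)}$ the sign pattern with respect to $(-\infty,0]$ matches that of $(x_k^\sigma)$ coordinatewise, so the empirical proportions are identical for every $n$ and therefore also converge to $\Phi(-a/\sigma)$; consequently $T^{(2)}_{\sigma_0}((y_k)_{k\in N})=\sigma$, giving $A_{J(\sigma)}\subseteq B(\sigma)$. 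Since $A_{J(\sigma)}$ is Haar ambivalent (Lemma 2.7), $B(\sigma)$ is non-shy.

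For non-prevalence I would invoke the disjointness argument: the family $\{B(\sigma)\}_{\sigma\in\Sigma}$ is pairwise disjoint as preimages of a function. If some $B(\sigma^*)$ were prevalent then $\mathbf{R}^N\setminus B(\sigma^*)$ would be shy, forcing every other $B(\sigma)$ ($\sigma\ne\sigma^*$) to be shy, contradicting the non-shyness just established. Hence each $B(\sigma)$ is Haar ambivalent, proving that $T^{(2)}_{\sigma_0}$ is objective. The main conceptual obstacle is verifying that changing the magnitudes of the $x_k$ while preserving their $(-\infty,0]$/$(0,\infty)$ dichotomy really preserves $T^{(2)}_{\sigma_0}$, which is immediate here because the estimator depends only on the indicator counts; a minor technical nuisance is that $\widetilde{T^{(2)}_n}$ is ill-defined when the empirical proportion equals $0$ or $1$, but this occurs only for finitely many $n$ along any sequence in $A_{J(\sigma)}$ (once $J(\sigma)$ and its complement are both nonempty in $\{1,\dots,n\}$), so it does not affect $\overline{\lim}\widetilde{T^{(2)}_n}$.
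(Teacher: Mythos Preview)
Your proposal is correct and follows essentially the same approach as the paper: consistency via the $\mu_\sigma$-equidistributed set $A_\sigma$ and Corollary 2.16, and objectivity by exhibiting a Haar ambivalent subset inside each fiber together with the partition/disjointness argument. The only cosmetic difference is that the paper embeds the translate $B_{J_\sigma}=(x_k^\sigma)+A_{J(\sigma)}$ into $(T^{(2)}_{\sigma_0})^{-1}(\sigma)$ whereas you embed $A_{J(\sigma)}$ itself; both are Haar ambivalent and the conclusion follows identically.
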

%\noindent{\bf  Proof.}
\begin{proof}
Following \cite{Shiryaev80}(see, p. 189),
the function $ \overline{\lim}\widetilde{T^{(2)}_n}$  is Borel measurable which implies that the function $ \overline{\lim}\widetilde{T^{(2)}_n}$ is  $(\mathcal{B}({\bf R}^N),L(\Sigma))$-measurable.

For each $\sigma \in \Sigma$ we put
\begin{equation}
A_{\sigma}=\{ (x_k)_{k \in N} \in {\bf R}^N:(x_k)_{k \in N}~ \hbox{is ~} \mu_{\sigma}-\hbox{equidistributed~in~}{\bf R} \}.\label{ccs}
\end{equation}
By Corollary  2.16 we know that  $\mu_{\sigma}^{N}(A_{\sigma})=1$ for
$\sigma \in \Sigma$ which implies

\begin{equation*}
\mu_{\sigma}^N (\{ (x_k)_{k \in N} \in {\bf R}^N
~:~\overline{\lim}\widetilde{T^{(2)}_n}(x_k)_{k \in N}=\sigma \}) \ge
\end{equation*}
\begin{equation*}
\mu_{\sigma}^N (\{(x_k)_{k \in N} \in {\bf  R}^N
~:~~\overline{\lim} \widetilde{T^{(2)}_n}(x_k)_{k \in N}=
\underline{\lim}\widetilde{T^{(2)}_n}(x_k)_{k \in N}=\sigma \}) =
\end{equation*}
\begin{equation*}
\mu_{\sigma}^N (\{(x_k)_{k \in N} \in {\bf  R}^N
~:~~\lim_{n \to \infty}\widetilde{T^{(2)}_n}((x_k)_{k \in N})=\sigma \}) =
\end{equation*}

\begin{equation*}
\mu_{\sigma}^N (\{ (x_k)_{k \in N} \in {\bf R}^N
~:~\lim_{n \to \infty} -\frac{a}{\Phi^{-1}(\frac{(\#(\{x_1, \cdots, x_n\}\cap (-\infty,0]))}{n})}=\sigma \})=
\end{equation*}

\begin{equation*}
\mu_{\sigma}^N (\{ (x_k)_{k \in N} \in {\bf R}^N
~:~\lim_{n \to \infty} \Phi^{-1}(\frac{(\#(\{x_1, \cdots, x_n\}\cap (-\infty,0])}{n})=-\frac{a}{\sigma} \})=
\end{equation*}

\begin{equation*}
\mu_{\sigma}^N (\{ (x_k)_{k \in N} \in {\bf R}^N
~:~\lim_{n \to \infty} \frac{(\#(\{x_1, \cdots, x_n\}\cap (-\infty,0])}{n}=\Phi(-\frac{a}{\sigma}) \})=
\end{equation*}
\begin{equation*}
\mu_{\sigma}^N (\{ (x_k)_{k \in N} \in {\bf R}^N
~:~\lim_{n \to \infty} \frac{(\#(\{x_1, \cdots, x_n\}\cap (-\infty,0])}{n}=\Phi_{(a,\sigma)}(0) \}) \ge
\end{equation*}
\begin{equation}
\mu_{\sigma}^N (A_{\sigma})=1.\label{ccs}
\end{equation}
The latter relation means that $\overline{\lim}\widetilde{T^{(2)}_n}$ is a infinite-sample consistent estimate of a parameter $\sigma$ for the family  of measures $(\mu_{\sigma}^N)_{\sigma>0}$.

Let show that $\overline{\lim}\widetilde{T^{(2)}_n}$ is objective.

We have to show that for each $\sigma>0$ the set $(\overline{\lim}\widetilde{T^{(2)}_n})^{-1}(\sigma)$ is a Haar ambivalent set.

Let $(x_k)_{k \in N}$ be $\mu_{\sigma}$-equidistributed sequence.  Then we get

\begin{equation}
\lim_{n \to \infty} \frac{(\#(\{x_1, \cdots, x_n\}\cap (-\infty,0])}{n}=\Phi_{(a,\sigma)}(0) \label{ccs}
\end{equation}
which means
\begin{equation}
T^{(2)}_{\sigma_0}((x_k)_{k \in N})=\overline{\lim}\widetilde{T^{(2)}_n}((x_k)_{k \in N})=\sigma.\label{ccs}
\end{equation}
Setting $J_{\sigma}=\{ i : x_i \le 0\}$, it is not hard to show that a set
\begin{equation}
B_{J_{\sigma}}=\{ (y_i)_{i \in N} : y_i \le x_i ~\hbox{for ~}i \in J~\& y_i > x_i ~\hbox{for ~}i \in N \setminus J\} \label{ccs}
\end{equation}
is a Haar ambivalent set.

It is clear also that for each $(y_i)_{i \in N} \in B_{J_{\sigma}}$ we have
\begin{equation*}
T^{(2)}_{\sigma_0}((y_k)_{k \in N})=\overline{\lim}\widetilde{T^{(2)}_n}((y_k)_{k \in N})=\sigma,\label{ccs}
\end{equation*}
which implies that $B_{J_{\sigma}} \subseteq (\overline{\lim}\widetilde{T^{(2)}_n})^{-1}(\sigma)$.

Since $\{(\overline{\lim}\widetilde{T^{(2)}_n})^{-1}(\sigma): \sigma >0\} $ is a partition of the ${\bf R}^N$ and each of them contains a Haar ambivalent set $B_{J_{\sigma}}$ we deduce that
$(\overline{\lim}\widetilde{T^{(2)}_n})^{-1}(\sigma)$ is a Haar ambivalent set for each $\sigma>0$.

This ends the proof of the theorem.

\end{proof}

\begin{thm}Let  $X_1,X_2, \cdots$  be independent normally  distributed real-valued random  variables with parameters $(a,\sigma)$ where  $a$ is a mean and $\sigma$ is a standard deviation. Suppose that both parameters are unknown. Let  $\Phi$ be a standard Gaussian distribution function in ${\bf R}$.
We denote by $\mu_{\sigma}$ a Borel Gaussian probability measure in ${ \bf R}$ with parameters  $(a,\sigma)$ for each $ \sigma \in \Sigma=(0,\infty)$ and $a \in {\bf R}$.
Let fix $\sigma_0 \in \Sigma$. Let define an estimate $T^{(3)}_{\sigma_0}: {\bf R}^N \to \Sigma$ as follows:
$T^{(3)}_{\sigma_0}((x_k)_{k \in N})=\overline{\lim}\widetilde{T^{(3)}_n}((x_k)_{k \in N})$
if $\overline{\lim}\widetilde{T^{(3)}_n}((x_k)_{k \in N}) \in \Sigma \setminus \{\sigma_0\}$
and $T^{(3)}_{\sigma_0}((x_k)_{k \in N})=\sigma_0,$ otherwise, where
$\overline{\lim}\widetilde{T^{(3)}_n} := \inf_n \sup_{m \ge
n}\widetilde{T^{(3)}_m}$
and
\begin{equation}
\widetilde{T^{(3)}_n}((x_k)_{k \in N})=T^{(3)}_n(x_1, \cdots, x_n)= -\frac{\sum_{i=1}^nx_k}{n\Phi^{-1}\big(\frac{\#(\{x_1, \cdots, x_n\}\cap (-\infty,0])}{n}\big)} \label{ccs}
\end{equation}
for  $(x_k)_{k \in N} \in {\bf  R}^N $. Then $T^{(3)}_{\sigma_0}$  is an infinite sample consistent
estimator of a parameter $\sigma$ for the family
$(\mu_{\sigma}^N)_{\sigma \in \Sigma}$.
\end{thm}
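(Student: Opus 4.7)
The plan is to mirror the proof of Theorem 5.8 closely, with the known mean $a$ in the numerator replaced by the sample mean $\bar{X}_n = n^{-1}\sum_{k=1}^n x_k$, which itself converges to $a$ by the classical strong law of large numbers (SLLN), applicable because $\mu_{\sigma}$ has finite first moment.

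First I would verify that $T^{(3)}_{\sigma_0}$ is $(\mathcal{B}({\bf R}^N), L(\Sigma))$-measurable. Each $\widetilde{T^{(3)}_n}$ is a Borel function on ${\bf R}^N$, being a rational combination of the partial sum $\sum_{i=1}^n x_i$ and the empirical count $\#(\{x_1,\ldots,x_n\}\cap(-\infty,0])$, both of which are Borel (with the harmless convention on the negligible set where the denominator vanishes). Following Shiryaev (p.~189), $\overline{\lim}\widetilde{T^{(3)}_n}$ is Borel, and the truncation/extension rule used to pass from it to $T^{(3)}_{\sigma_0}$ is exactly of the form employed in Theorems 3.4 and 5.8; the same argument delivers the claimed measurability.

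Next I would fix $(a,\sigma)$ with $a\neq 0$ and $\sigma>0$. Let $A_\sigma$ be the set of $\mu_\sigma$-equidistributed sequences on ${\bf R}$, so $\mu_\sigma^N(A_\sigma)=1$ by Corollary 2.16, and let $B$ be the SLLN full-measure set on which $\bar{X}_n\to a$. Taking $[c,d]=(-\infty,0]$ in Definition 2.14 gives, on $A_\sigma$,
\[
\lim_{n\to\infty}\frac{\#(\{x_1,\ldots,x_n\}\cap(-\infty,0])}{n}=F_{(a,\sigma)}(0)=\Phi\!\left(-\tfrac{a}{\sigma}\right),
\]
and this value lies in $(0,1)\setminus\{1/2\}$ because $a\neq 0$. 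Continuity of $\Phi^{-1}$ at $\Phi(-a/\sigma)$, together with $\bar{X}_n\to a\neq 0$ on $B$, then yields
\[
\lim_{n\to\infty}\widetilde{T^{(3)}_n}((x_k)_{k\in N})=\frac{-a}{\Phi^{-1}(\Phi(-a/\sigma))}=\frac{-a}{-a/\sigma}=\sigma\qquad\text{on }A_\sigma\cap B.
\]
Since the sequence has a genuine limit equal to $\sigma\in\Sigma$ on $A_\sigma\cap B$, we have $\overline{\lim}\widetilde{T^{(3)}_n}=\sigma$ there, and the defining recipe returns $T^{(3)}_{\sigma_0}((x_k)_{k\in N})=\sigma$ whether or not $\sigma=\sigma_0$. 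The chain
\[
\mu_\sigma^N\!\bigl(\{T^{(3)}_{\sigma_0}=\sigma\}\bigr)\ge \mu_\sigma^N(A_\sigma\cap B)=1
\]
then closes the consistency claim exactly as at the end of Theorems 3.4 and 5.8.

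The main obstacle is the degenerate case $a=0$: there $\Phi(-a/\sigma)=1/2$ and $\Phi^{-1}(1/2)=0$, so the ratio in $\widetilde{T^{(3)}_n}$ is the indeterminate form $0/0$ and the argument collapses. Theorem 5.8 avoided this by explicitly assuming a non-zero mean, and I expect the present statement to be intended under the same tacit hypothesis; with that caveat the above plan goes through unchanged. A secondary, purely technical point is that the convergence of the empirical CDF has to be derived from $\mu_\sigma$-equidistribution on ${\bf R}$ (Definition 2.14 and Corollary 2.16) rather than from Lemma 2.11, since the indicator of $(-\infty,0]$ is not a continuous bounded function.
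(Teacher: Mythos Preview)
Your proposal is correct and follows essentially the same route as the paper's own proof: measurability via Shiryaev, the intersection of the $\mu_\sigma$-equidistribution set $A_\sigma$ (Corollary~2.16) with the SLLN set $B_\sigma$, and the same chain of limits leading to $\mu_\sigma^N(\{T^{(3)}_{\sigma_0}=\sigma\})\ge \mu_\sigma^N(A_\sigma\cap B_\sigma)=1$. Your remarks on the degenerate case $a=0$ and on invoking Definition~2.14 rather than Lemma~2.11 are sound refinements that the paper leaves implicit.
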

%\noindent{\bf  Proof.}
\begin{proof}
Following \cite{Shiryaev80}(see, p. 189),
the function $ \overline{\lim}\widetilde{T^{(3)}_n}$  is Borel measurable which implies that the function $ \overline{\lim}\widetilde{T^{(3)}_n}$ is  $(\mathcal{B}({\bf R}^N),L(\Sigma))$-measurable.

For each $\sigma \in \Sigma$ we put
\begin{equation}
A_{\sigma}=\{ (x_k)_{k \in N} \in {\bf R}^N:(x_k)_{k \in N}~ \hbox{is ~} \mu_{\sigma}-\hbox{equidistributed~in~}{\bf R} \}\label{ccs}
\end{equation}
and
\begin{equation}
B_{\sigma}=\{ (x_k)_{k \in N} \in {\bf R}^N: \lim_{n \to \infty} \frac{\sum_{k=1}^nx_k }{n} =a \}.\label{ccs}
\end{equation}

On the one hand, by By Corollary  2.16 we know that  $\mu_{\sigma}^{N}(A_{\sigma})=1$ for
$\sigma \in \Sigma$. On the other hand , by Strong Law of Large Numbers we know that  $\mu_{\sigma}^{N}(B_{\sigma})=1$ for
$\sigma \in \Sigma$. These relations imply that
\begin{equation}
\mu_{\sigma}^{N}(A_{\sigma} \cap B_{\sigma})=1 \label{ccs}
\end{equation}
for $\sigma \in \Sigma$.

Take into account (5.22), we get
\begin{equation*}
\mu_{\sigma}^N (\{ (x_k)_{k \in N} \in {\bf R}^N
~:~\overline{\lim}\widetilde{T^{(3)}_n}(x_k)_{k \in N}=\sigma \}) \ge
\end{equation*}
\begin{equation*}
\mu_{\sigma}^N (\{(x_k)_{k \in N} \in {\bf  R}^N
~:~~\overline{\lim} \widetilde{T^{(3)}_n}(x_k)_{k \in N}=
\underline{\lim}\widetilde{T^{(3)}_n}(x_k)_{k \in N}=\sigma \}) =
\end{equation*}
\begin{equation*}
\mu_{\sigma}^N (\{(x_k)_{k \in N} \in {\bf  R}^N
~:~~\lim_{n \to \infty}\widetilde{T^{(3)}_n}((x_k)_{k \in N})=\sigma \}) =
\end{equation*}

\begin{equation*}
\mu_{\sigma}^N (\{ (x_k)_{k \in N} \in {\bf R}^N
~:~\lim_{n \to \infty} -\frac{\frac{\sum_{k=1}^nx_k }{n}}{\Phi^{-1}(\frac{(\#(\{x_1, \cdots, x_n\}\cap (-\infty,0]))}{n})}=\sigma \})\ge
\end{equation*}

\begin{equation*}
\mu_{\sigma}^N (\{ (x_k)_{k \in N} \in A_{\sigma} \cap B_{\sigma}
~:~\lim_{n \to \infty} \Phi^{-1}(\frac{(\#(\{x_1, \cdots, x_n\}\cap (-\infty,0])}{n})=-\lim_{n \to \infty}\frac{\frac{\sum_{k=1}^nx_k }{n}}{\sigma} \})=
\end{equation*}

\begin{equation*}
\mu_{\sigma}^N (\{ (x_k)_{k \in N} \in A_{\sigma} \cap B_{\sigma}
~:~\lim_{n \to \infty} \Phi^{-1}(\frac{(\#(\{x_1, \cdots, x_n\}\cap (-\infty,0])}{n})=-\frac{a}{\sigma} \})=
\end{equation*}

\begin{equation*}
\mu_{\sigma}^N (\{ (x_k)_{k \in N} \in {\bf R}^N
~:~\lim_{n \to \infty} \frac{(\#(\{x_1, \cdots, x_n\}\cap (-\infty,0])}{n}=\Phi_{(a,\sigma)}(0) \})=
\end{equation*}
\begin{equation}
\mu_{\sigma}^N (A_{\sigma} \cap B_{\sigma})=1.\label{ccs}
\end{equation}
The latter relation means that $T^{(3)}_{\sigma_0}$ is an infinite-sample consistent estimate of a parameter $\sigma$ for the family  of measures $(\mu_{\sigma}^N)_{\sigma>0}$.

This ends the proof of the theorem.

\end{proof}

\begin{ex} Since a sequence of
real numbers $(\pi \times n -[\pi \times n])_{n \in N}$, where
$[\cdot]$ denotes an integer part of a real number,  is uniformly
distributed on $(0,1)$(see, \cite{KuNi74}, Example 2.1, p.17), we
claim that a simulation of a $\mu_{(3,5)}$-equidistributed
sequence $(x_n)_{n \le M}$  on $R$( $M$ is a "sufficiently large"
natural number and depends on a representation quality of the
irrational number  $\pi$), where $\mu_{(3,5)}$ denotes a
a linear Gaussian measure with parameters $(3,5)$,  can
be obtained  by the formula
\begin{equation}
x_n=\Phi_{(3,5)}^{-1} (\pi \times n -[\pi \times n])\label{ccs}
\end{equation}
for $n \le M$, where $\Phi_{(3,5)}$
denotes a  Gaussian  distribution function with parameters $(3,5)$.

 Suppose that we know a mean $a=3$ and want to estimate an "unknown" standard
deviation $\sigma$.

 We set: $n$ - the number of trials;~$S_n$ - a square root from the sample variance;~$S^{'}_n$ - a square root from the corrected sample variance;
~$T^{(2)}_n$ - an estimate defined by the formula (5.13); ~$T^{(3)}_n$ - an estimate defined by the formula (5.19);
~$\sigma$ - an unknown standard deviation.

 The numerical data placed in Table 3 were obtained by using  Microsoft Excel.
\begin{table*}
\caption{Estimates of an unknown standard deviation $\sigma=5$ }
\label{sphericcase}
\begin{tabular}{crrrc}
\hline
 \multicolumn{1}{c}{$n$} & \multicolumn{1}{c}{$S_n$} & \multicolumn{1}{c}{$S^{'}_n$}  & \multicolumn{1}{c}{$T^{(2)}_n$} & \multicolumn{1}{c}{$T^{(3)}_n$}
  \\
\hline
 $ 200 $       &    $4.992413159$  & $5.004941192$  & $5.205401325
$ &     $4.895457577$  \\

$ 400 $       &    $4.992413159$  & $5.004941192$  & $5.141812921
$ &     $4.835655399$  \\

$ 600 $       &    $5.10523925$  & $5.109498942$  & $5.211046737
$ &     $4.855457413$ \\

$ 800 $       &    $5.106390271$  & $5.109584761$  & $5.19369988
$ &     $4.92581015$ \\
$ 1000 $       &    $5.066642282$  & $5.069177505$  & $5.028142523
$ &     $4.944169095$ \\

$ 1200 $       &    $5.072294934$  & $5.074409712$  & $5.235885276
$ &     $4.935995814$ \\

$ 1400 $       &    $5.081110418$  & $5.082926073$  & $5.249446371
$ &     $4.96528786$ \\

$ 1600 $       &    $5.079219075$  & $5.080807075$  & $5.205452797
$ &     $4.9564705$ \\

$ 1800 $       &    $5.060850283$  & $5.06225666$  & $5.207913228
$ &     $4.963326232$ \\

$ 2000 $       &    $5.063112113$  & $5.064378366$  & $5.239119585
$ &   $4.981223889$ \\
\end{tabular}
\end{table*}
Notice  that results of computations presented in Table 3 show us that both statistics  $T^{(2)}_n$ and $T^{(3)}_n$ work correctly.
Unfortunately, we can not present the results  of computations for the statistics  $T^{(1)}_n$ defined by the formula  (5.10) because it needs a quite exact calculations.

\end{ex}

At end of this section we state the following
\begin{prob} Let  $\mathcal{D}$ be a  class  of positive continuous probability densities  and $p_{f}$ be a Borel probability measure
on ${\bf R}$ with probability density function $f$ for each $f \in \mathcal{D}$. Does there exist an objective (or a subjective)  infinite sample consistent estimate of an unknown probability density function
 $f$ for the  family of Borel probability measures  $\{ p_f^N : f \in \mathcal{D})\} $?
\end{prob}

\section{On orthogonal statistical structures in a non-locally-compact Polish
group admitting an invariant metric}

%\vspace*{0.3cm}{\bf  Theorem 4.1}~{\it
Let $G$ be a Polish group, by which we mean a separable group with a
complete invariant metric $\rho$ (i.e., $\rho(fh_1g,fh_2g)=\rho(h_1,h_2)$ for each $f,g,h_1,h_2 \in G$) for which the transformation (from $G \times G$ onto $G$),
which sends $(x, y)$ into $x^{-1}y$ is continuous. Let $\mathcal{B}(G)$ denotes the
$\sigma$-algebra of Borel subsets of $G$.
\begin{defn} \cite{Myc92} A Borel set $X \subseteq G$ is called shy, if
there exists a Borel probability measure $\mu$ over $G$  such that  $\mu(f X g) = 0$
 for all $f, g  \in G.$  A measure $\mu$ is called a testing measure for a set $X$.
  A subset of a Borel shy set is
called also shy. The complement of a shy set is
called a prevalent set.
\end{defn}
\begin{defn} \cite{Balka12} A Borel set is called a Haar ambivalent set if it is neither shy nor
prevalent.
\end{defn}
\begin{rem}Notice that  if $X \subseteq G$ is shy then there exists such a testing measure $\mu$ for a set $X$  which has  a compact carrier $K \subseteq G$(i.e.
$\mu(G \setminus K)=0$). The collection of shy  sets constitutes  an $\sigma$-ideal, and in the case where $G$ is locally
compact a  set is shy iff it has Haar measure zero.
\end{rem}
\begin{defn} If $G$ is a Polish group and $\{\mu_{\theta}: \theta \in \Theta\}$ is  a family of Borel probability measures on  $G$, then the family of triplets
$\{(G, \mathcal{B},\mu_{\theta}): \theta \in \Theta\}$, where $\Theta$ is a non-empty set equipped with an $\sigma$ algebra $L(\Theta)$ generated by  all singletons of $\Theta$,   is called a statistical structure. A set $\Theta$ is called a set of parameters.
\end{defn}
\begin{defn}$(\mathcal{O})$ The statistical structure $\{(G, \mathcal{B}(G),\mu_{\theta}): \theta \in \Theta\}$ is called orthogonal
if the measures $\mu_{\theta_1}$ and $\mu_{\theta_2}$ are  orthogonal for each different parameters  $ \theta_1$ and $\theta_2$.
\end{defn}
\begin{defn} $(\mathcal{WS})$ The statistical structure $\{(G, \mathcal{B}(G),\mu_{\theta}): \theta \in \Theta\}$ is called weakly separated
if there exists a family of Borel subsets $\{X_\theta:\theta \in \Theta\}$ such that $\mu_{\theta_1}(X_{\theta_2})=\delta(\theta_1,\theta_2)$, where $\delta$ denotes Kronecker’s function defined on the Cartesian square $\Theta \times \Theta$ of the set $\Theta$. \end{defn}
\begin{defn}$(\mathcal{SS})$ The statistical structure $\{(G, \mathcal{B}(G),\mu_{\theta}): \theta \in \Theta\}$ is called strong separated (or strictly separated)
if there exists a partition of the group $G$ into family of Borel subsets $\{X_\theta:\theta \in \Theta\}$ such that $\mu_{\theta}(X_{\theta})=1$ for each $\theta \in \Theta$. \end{defn}
\begin{defn}$(\mathcal{CE})$  A $(\mathcal{B}(G), L(\Theta))$-measurable mapping  $T : G \to \Theta$ is called a consistent estimate of an unknown parameter $\theta \in \Theta$ for the statistical structure $\{(G, \mathcal{B}(G),\mu_{\theta}): \theta \in \Theta\}$ if the condition
 $\mu_{\theta}(T^{-1}(\theta))=1$ holds true for each $\theta \in \Theta$.
 \end{defn}
 \begin{defn} $(\mathcal{OCE})$  A $(\mathcal{B}(G), L(\Theta))$-measurable mapping  $T : G \to \Theta$ is called an objective  consistent estimate of an unknown  parameter $\theta \in \Theta$ for the statistical
 structure $\{(G, \mathcal{B}(G),\mu_{\theta}): \theta \in \Theta\}$  if the following two conditions hold:

 (i)~ $\mu_{\theta}(T^{-1}(\theta))=1$ for each $\theta \in \Theta$;

 (ii)~ $T^{-1}(\theta)$ is a Haar ambivalent set for each $\theta \in \Theta$.

 If the condition (i) holds but the condition (ii) fails, then $T$ is called  a subjective  consistent estimate of an unknown  parameter $\theta \in \Theta$ for the statistical
 structure $\{(G, \mathcal{B},\mu_{\theta}): \theta \in \Theta\}$.
 \end{defn}
\begin{defn}$(\mathcal{SOCE})$  An objective  consistent estimate $T : G \to \Theta$  of an unknown  parameter $\theta \in \Theta$ for the statistical
 structure $\{(G, \mathcal{B}(G),\mu_{\theta}): \theta \in \Theta\}$  is called {\it strong} if for each $\theta_1, \theta_2 \in \Theta $ there exists an isometric Borel measurable bijection $A_{(\theta_1,\theta_2)}:G \to G$ such that the set $A_{(\theta_1,\theta_2)} (T^{-1}(\theta_1)) \Delta T^{-1}(\theta_2)$ is shy in $G$.
 \end{defn}
\begin{rem} Let $G$ be a Polish
non-locally-compact group admitting an invariant metric. The relations between  statistical structures introduced in Definitions 6.5-6.10  for such a group  can be presented by  the following diagram:
\begin{equation}
\begin{matrix}
\mathcal{SOCE}& \rightarrow&  \mathcal{OCE}&\rightarrow&  \mathcal{CE} \leftrightarrow \mathcal{SS}&  \rightarrow&  \mathcal{WS}&  \rightarrow &  \mathcal{O}\\
\end{matrix}
\label{ccs}
\end{equation}
\end{rem}

To show that the converse implications  sometimes fail  we consider the following examples.

\begin{ex}$\rceil(\mathcal{WS} \leftarrow  \mathcal{O})$~~Let $F \subset G$ be a closed subset of the cardinality $2^{\aleph_0}$. Let $\phi:[0,1]\to F$ be a Borel isomorphism of $[0,1]$ onto $F$. We set $\mu(X)=\lambda(\phi^{-1}(X \cap F))$  for $X \in \mathcal{B}(G)$, where $\lambda$ denotes a linear Lebesgue measure on $[0,1]$. We put $\Theta=F$. Let fix $\theta_0 \in \Theta$ and put: $\mu_{\theta}=\mu$ if $\theta=\theta_0$, and $\mu_{\theta}=\delta_{\theta}|_{\mathcal{B}(G)}$, otherwise, where $\delta_{\theta}$ denotes a Dirac measure on $G$  concentrated at the point $\theta$  and  $\delta_{\theta}|_{\mathcal{B}(G)}$ denotes the restriction of the  $\delta_{\theta}$  to the class $\mathcal{B}(G)$. Then the statistical
 structure $\{(G, \mathcal{B},\mu_{\theta}): \theta \in \Theta\}$ stands  $\mathcal{O}$ which is not $\mathcal{WS}$.
\end{ex}

\begin{ex}(SM) $\rceil(\mathcal{SS} \leftarrow  \mathcal{WS})$  Following \cite{Pan03}(see, Theorem 1, p. 335), in the system of axioms (ZFC) the following three conditions
are equivalent:

1) The Continuum Hypothesis $(c = 2^{\aleph_0}= \aleph_1)$;

2) for an arbitrary probability space $(E; S; \mu)$, the $\mu$-measure of the union of
any family $(E_i)_{i \in I}$  of $\mu$-measure zero subsets, such that $\mbox{card}(I) < c$, is equal to
zero;

3) an arbitrary weakly separated family of probability measures, of cardinality
continuum, is strictly separated.

 The latter relation means that   under  Continuum Hypothesis in $ZFC$  we have $\mathcal{SS} \leftarrow  \mathcal{WS}$. This is just Skorohod well known result(see, \cite{IbramSkor80}).  Moreover, following \cite{Pan03}(see Theorem 2, p.339), if $(F,\rho)$ is a Radon metric space and $(\mu_i)_{i \in I}$ is a weakly
separated family of Borel probability measures with $\mbox{card}(I)\le c$, then  in the system of axioms $(ZFC) \& (MA)$, the family $(\mu_i)_{i \in I}$ is strictly
separated.

Let consider a counter example to the implication  $\mathcal{SS} \leftarrow  \mathcal{WS}$ in the Solovay model (SM) \cite{Solovay92} which is the following system of axioms:
$(ZF)$+$DC$+ ''every subset of the real axis ${\bf R}$ is Lebesgue measurable'', where  $(ZF)$ denotes the  Zermelo-Fraenkel set theory and $(DC)$
denotes the axiom of Dependent Choices.

For $\theta \in (0;1)$ , let $b_{\theta}$ be a linear classical Borel measure defined on the set
$\{\theta\} \times (0;1)$. For $\theta \in  (1.2)$, let  $b_{\theta}$ be a linear classical Borel measure defined on the set
$(0;1) \times  \{\theta-1\}$. By $\lambda_{\theta}$ we denote a Borel probability measure on $(0;1)\times (0;1)$ produced by $b_{\theta}$, i.e.,
\begin{equation*}
(\forall X)(\forall \theta_1)(\forall \theta_2)(X \in \mathcal{B}((0;1)\times (0;1))~\&~\theta_1 \in (0;1)~\&~\theta_2 \in (1;2) \rightarrow
\end{equation*}
\begin{equation}
\lambda_{\theta_1}(X)=b_{\theta_1}((\{\theta_1\} \times (0;1)) \cap X)~\&~ \lambda_{\theta_2}(X)=b_{\theta_2}(((0;1) \times \{\theta_1-1\}) \cap X)).
\label{ccs}
\end{equation}
If we put $\theta=(0;1) \cup (1;2)$, then we get a statistical structure
\begin{equation}((0;1)\times (0;1), \mathcal{B}((0;1)\times (0;1)), \lambda_{\theta})_{\theta \in \Theta}.\label{ccs}
\end{equation}
Setting  $X_\theta=\{\theta\} \times (0;1)$ for $\theta \in (0;1)$, and  $X_\theta=(0;1) \times  \{\theta-1\}$ for $\theta \in  (1.2)$, we observe that for the family of Borel subsets $\{X_\theta:\theta \in \Theta\}$ we have $\lambda_{\theta_1}(X_{\theta_2})=\delta(\theta_1,\theta_2)$, where $\delta$ denotes Kronecker’s function defined on the Cartesian square $\Theta \times \Theta$ of the set $\Theta$. In other words,  $(\lambda_{\theta})_{\theta \in \Theta}$ is weakly separated. Now let assume that this family is strong separated. Then there
will be a partition $\{ Y_{\theta}:\theta \in \Theta\} $ of the set $(0;1)\times (0;1)$ into Borel subsets $(Y_{\theta})_{\theta \in \Theta}$ such that $\lambda_{\theta}(Y_{\theta})=1$ for each $\theta \in \Theta$. If we consider $A=\cup_{\theta \in (0;1)}Y_{\theta}$ and $B=\cup_{\theta \in (1;2)}Y_{\theta}$ then we observe by Fubini theorem that $\ell_2(A)=1$ and $\ell_2(B)=1$, where $\ell_2$ denotes the $2$-dimensional Lebesgue measure defined in $(0;1)\times (0;1)$. This is the contradiction and we proved that $(\lambda_{\theta})_{\theta \in \Theta}$ is not strictly separated. An  existence of a Borel isomorphism $g$ between $(0;1)\times(0;1)$ and $G$ allows us to construct a family $(\mu_{\theta})_{\theta \in \Theta}$ in $G$ as follows: $\mu_{\theta}(X)=\lambda_{\theta}(g^{-1}(X))$ for each $X \in \mathcal{B}(G)$ and $\theta \in \Theta$ which is $\mathcal{WS}$ but no $\mathcal{SS}$(equivalently, $\mathcal{CE}$).
 By virtue the celebrated result of  Mycielski and Swierczkowski (see, \cite{Myc64}) asserted that under the Axiom of Determinacy $(AD)$ every subset of the real axis ${\bf  R}$ is Lebesgue measurable, the same
example can be used as a counter example to the implication  $\mathcal{SS} \leftarrow  \mathcal{WS}$  in the theory $(ZF)+(DC)+(AD)$.
Since the answer to the question asking ''{\it whether $(\mu_{\theta})_{\theta \in \Theta}$ has a consistent estimate?}'' is {\it \bf yes} in the theory $(ZFC)~\&~(CH)$, and {\it \bf no} in the theory   $(ZF)+(DC)+(AD)$, we deduce that this question is not solvable within the theory $(ZF)+(DC)$.
\end{ex}

\begin{ex}$\rceil(\mathcal{OCE} \leftarrow  \mathcal{CE})$  Setting $\Theta=G$ and $\mu_{\theta}=\delta_{\theta}|\mathcal{B}(G)$ for $\theta \in \Theta$, where $\delta_{\theta}$ denotes a Dirac measure in $G$ concentrated at the point $\theta$  and  $\delta_{\theta}|\mathcal{B}(G)$ denotes its restriction to $\mathcal{B}(G)$,  we get a statistical structure $(G, \mathcal{B}(G), \mu_{\theta})_{\theta \in \Theta}$. Let $L(\Theta)$ denotes a minimal $\sigma$-algebra of subsets of $\Theta$ generated by all singletons of $\Theta$.
Setting $T(g)= g$  for $g \in G$,  we get a consistent estimate of an unknown  parameter $\theta$ for the
family $(\mu_{\theta})_{\theta \in \Theta}$.  Notice that there does not exist  an objective consistent estimate of a parameter $\theta$ for the
family $(\mu_{\theta})_{\theta \in \Theta}$. Indeed, if we assume the contrary and $T_1$ be such an estimate, we get that $T_1^{-1}(\theta)$ is a Haar ambivalent set for each $\theta \in \Theta$.
Since $T_1$ is a consistent estimate of an unknown  parameter $\theta$  for each $\theta \in \Theta$, we get that the condition $\mu_{\theta}( T_1^{-1}(\theta))=1$ holds true which implies
that $\theta \in T_1^{-1}(\theta)$ for each $\theta \in \Theta$.
Let fix any parameter $\theta_0 \in \Theta$. Since $T_1^{-1}(\theta_0)$ is a Haar ambivalent set there is $\theta_1 \in T_1^{-1}(\theta_0)$ which differs from $\theta_0$. Then $T_1^{-1}(\theta_0)$ and
$T_1^{-1}(\theta_1)$ are not disjoint because  $\theta_1 \in T_1^{-1}(\theta_0) \cap T_1^{-1}(\theta_1)$ and we get the contradiction.
\end{ex}
\begin{rem} Notice that if $(\Theta, \rho)$ is a metric space and if  in the Definition 2.8 the requirement of a
$(\mathcal{B}(G), L(\Theta))$-measurability will be  replaced with a $(\mathcal{B}(G), \mathcal{B}(\Theta))$-measurability, then the implication  $\mathcal{SS} \rightarrow    \mathcal{CE}$ may be false. Indeed, let $G$ be a Polish group and   $f: G \leftarrow \Theta(:= G)$  be a non-measurable(in the Borel sense) bijection. For each $\theta \in \Theta$ denote by $\mu_{\theta}$ the restriction of the Dirac measure $\delta_{f(\theta)}$ to the $\sigma$-algebra of  Borel subsets of the group $G$. It is clear that the statistical structure $\{(G, \mathcal{B}(G), \mu_{\theta}): \theta \in \Theta\}$ is strictly separated. Let show that there does not exist a consistent estimate for that statistical structure. Indeed, let $T:G \to \Theta$ be $(\mathcal{B}(G), \mathcal{B}(\Theta))$-measurable mapping such that
$\mu_{\theta}(\{ x: T(x)=\theta\})=1$ for each $\theta \in \Theta$. Since the measure $\mu_{\theta}$ is concentrated at the point $f(\theta)$ we have that $f(\theta) \in \{ x:T(x)=\theta\}$   for each $\theta \in \Theta$ which implies that  $T(f(\theta))=\theta$ for each $\theta \in \Theta$. The latter relation means that $T=f^{-1}$. Since $f$ is not $(\mathcal{B}(G), \mathcal{B}(\Theta))$-measurable, we claim that $f^{-1}=T$ is not also $(\mathcal{B}(G), \mathcal{B}(\Theta))$-measurable and we get the contradiction.
\end{rem}
There naturally arises a question asking whether there exists such a statistical
 structure $\{(G, \mathcal{B},\mu_{\theta}): \theta \in \Theta\}$ in a Polish non-locally-compact group admitting
an invariant metric which has an objective consistent estimate of a parameter $\theta$. To answer positively to this question,  we  need the following two lemmas.
\begin{lem} (\cite{Solecki96}, Theorem, p.206)  Assume $G$ is a Polish, non-locally-compact group admitting
an invariant metric. Then there exists a closed set $F\subseteq G$ and a continuous
function  $ \phi: F \to 2^N$  such that for any $x  \in 2^N$ and any compact set $K \subseteq G$
there is $g \in G$ with $g K \subseteq \phi^{-1}(x)$.
\end{lem}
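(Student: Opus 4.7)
The plan is to construct $F$ as the body of a Cantor scheme of nonempty closed subsets $\{F_s\}$ of $G$ indexed by finite binary sequences $s$, with the two children $F_{s0}$ and $F_{s1}$ being disjoint closed subsets of $F_s$ separated by a positive $d$-distance. Taking $\phi:F\to 2^N$ to be the branch-labeling map sending $y$ to the unique $x\in 2^N$ with $y\in F_{x|n}$ for every $n$, and setting $F=\bigcap_n\bigcup_{|s|=n}F_s$, continuity of $\phi$ is automatic from the positive-separation condition, and only the absorption property for the fibers $\phi^{-1}(x)$ remains to be secured.

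The first preparatory step is a dispersion lemma: since $d$ is left-invariant and $G$ is non-locally-compact, no nonempty open $U\subseteq G$ is totally bounded, so for any $\varepsilon>0$ there is an infinite $\varepsilon$-separated subset of $U$. Applied with $\varepsilon$ larger than the diameter of a prescribed compact $K$ and combined with invariance of $d$, this yields infinitely many pairwise disjoint translates of $K$ inside a slight enlargement of $U$. This lemma is the place where non-local-compactness is used essentially; in a locally compact group the analogous statement fails.

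The core recursive construction proceeds by induction on $|s|$. Using separability of $G$ one fixes a countable family $\{C_n\}$ of compact subsets of $G$ which is cofinal up to translation, i.e.\ every compact $K\subseteq G$ satisfies $K\subseteq hC_n$ for some $h\in G$ and some $n$; absorbing each $C_n$ then absorbs every compact. At stage $n$, for each $s$ of length $n$, I would require that $F_s$ contains a designated translate $g_{s,k}C_k\subseteq F_s$ for every $k\le n$, together with an infinite ``reservoir'' of further pairwise disjoint translates of each $C_k$ inside $F_s$. The splitting step invokes the dispersion lemma to split this reservoir between two disjoint closed neighborhoods $F_{s0}$ and $F_{s1}$ sitting at positive mutual distance inside $F_s$, each inheriting infinitely many reservoir translates of every $C_k$. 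The branch-wise intersection $\phi^{-1}(x)=\bigcap_n F_{x|n}$ then inherits a translate of every $C_k$, hence contains a translate of every compact subset of $G$.

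The principal obstacle is the simultaneous preservation of the absorption property at every splitting step: each parent $F_s$ must be carved into two disjoint closed children that both still absorb every compact in the cofinal family. In a locally compact group this is flatly impossible, since any set absorbing all compacts has positive Haar measure on arbitrarily large compacts, so non-local-compactness must be reinvoked at every stage rather than just once; the reservoir device, continuously refueled by the dispersion lemma, is the mechanism that makes this possible. A secondary technical difficulty is the explicit construction of the cofinal family $\{C_n\}$ in the absence of compact balls, which requires a careful diagonal choice using a countable dense subset of $G$ together with completeness of the invariant metric $d$.
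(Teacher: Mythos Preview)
The paper does not prove this lemma at all; it is simply quoted from Solecki's 1996 paper and used as a black box. So there is no ``paper's own proof'' to compare against, only Solecki's original argument.

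That said, your proposal contains a genuine gap. The step ``fix a countable family $\{C_n\}$ of compact subsets of $G$ which is cofinal up to translation'' cannot be carried out in a non-locally-compact Polish group. Take $G={\bf R}^{N}$ with the product topology (the very group used throughout this paper): every compact set is contained in some product $\prod_{k}[-M_k,M_k]$, and translation does not change the widths $M_k$. Cofinality up to translation would therefore require a countable family of sequences $(M_k^{(n)})_{k}$ in $(0,\infty)^{N}$ such that every sequence of positive reals is coordinatewise dominated by one of them. A standard diagonal argument shows no such countable family exists. The same obstruction appears in $\ell^2$ via tail-decay profiles. So the ``secondary technical difficulty'' you flag at the end is in fact fatal to the scheme as written: the object you intend to construct simply does not exist, and no amount of care with a countable dense set will produce it.

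Solecki's actual proof avoids this by never enumerating compact sets. Instead one works with finite approximations: a compact $K$ is the Hausdorff limit of finite $2^{-n}$-nets $E_n\subseteq K$, and at level $n$ of the tree one arranges that every finite configuration of size $\le n$ (drawn from a fixed countable dense set, up to small perturbation) has many disjoint translates inside each $F_s$ with prescribed ``room'' around each point. The translating element $g$ with $gK\subseteq\phi^{-1}(x)$ is then built as a Cauchy limit $g=\lim g_n$ along the branch $x$, with $g_n$ chosen so that $g_nE_n$ lies well inside $F_{x\restriction n}$; completeness of the invariant metric and the nesting of the $F_{x\restriction n}$ force $gK\subseteq\bigcap_n F_{x\restriction n}$. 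Your dispersion lemma and Cantor-scheme skeleton are correct ingredients, but the bookkeeping must be done at the level of finite sets with controlled neighborhoods, not at the level of whole compact sets.
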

\begin{lem} (\cite{Dough94}, Proposition 12, p.87 ). Let $G$ be a non-locally-compact Polish group with an
invariant metric. Then any compact subset (and hence any $K_{\sigma}$ subset) of
$G$ is shy.
\end{lem}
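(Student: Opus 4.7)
The plan splits into two reductions. First, by Lemma 2.4 the collection of shy sets forms a $\sigma$-ideal, and every $K_\sigma$ subset of $G$ is a countable union of compacts, so the ``and hence any $K_\sigma$'' clause follows at once from the compact case. I therefore fix a compact $K \subseteq G$ and seek a Borel probability measure $\mu$ on $G$ with $\mu(fKg) = 0$ for all $f, g \in G$; by Remark 6.3 one may in fact require $\mu$ to have compact support.

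My construction of $\mu$ is a Cantor-tree construction. I inductively pick elements $g_\sigma \in G$ indexed by $\sigma \in 2^{<\mathbb{N}}$ with $g_\emptyset = e$, $g_{\sigma 0} = g_\sigma$ and $g_{\sigma 1} = g_\sigma \, x_{|\sigma|+1}$, for ``splitters'' $x_n \in G$ to be determined. Imposing $\rho(e, x_n) < 2^{-n}$ makes each branch $\alpha \in 2^{\mathbb N}$ a Cauchy sequence $(g_{\alpha|_n})_{n}$ with limit $g_\alpha \in G$, so that $L := \{g_\alpha : \alpha \in 2^{\mathbb N}\}$ is compact; I let $\mu$ be the push-forward of the uniform Bernoulli measure on $2^{\mathbb N}$ under $\alpha \mapsto g_\alpha$. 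To secure $\mu(fKg) = 0$ for every $f, g \in G$, I fix, using separability of $G \times G$, a countable dense set $\{(f_k, g_k) : k \in \mathbb{N}\}$, and at stage $n$ I select $x_n$ so that for each $k \le n$ the Bernoulli measure of $\{\alpha : g_\alpha \in f_k K g_k\}$ is at most $2^{-n}$. Continuity of multiplication in $G$ then lets the transversality pass from the dense family to arbitrary $(f,g) \in G \times G$ in the limit.

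The main obstacle is arranging the inductive choice of $x_n$ so as to reconcile two competing pressures: the size constraint $\rho(e, x_n) < 2^{-n}$, needed for compactness of $L$, and combinatorial transversality of the resulting tree against the finitely many translates $f_k K g_k$ ($k \le n$). In an invariantly metrized group, multiplying by a small element gives only a small displacement, so siblings $g_{\sigma 0}, g_{\sigma 1}$ cannot be made far apart in the naive sense. The decisive input here is non-local-compactness: no ball $B(e, \delta)$ is totally bounded, so inside any such ball there sits an infinite family of pairwise $\epsilon$-separated elements for some $\epsilon = \epsilon(\delta) > 0$. This combinatorial surplus provides, at every inductive step, enough freedom to choose $x_n$ avoiding the finitely many obstructions induced by the listed translates and to refine the transversality estimate to the required level $2^{-n}$. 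Without non-local-compactness the argument should fail, and does: in a locally compact group, any compact set with nonempty interior has positive Haar measure and is consequently not shy.
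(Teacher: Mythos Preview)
The paper does not supply its own proof of this lemma: it is quoted verbatim as a result of Dougherty (Proposition~12 in \cite{Dough94}) and used as a black box. So there is no ``paper's proof'' to compare against; your outline is in fact a sketch of Dougherty's original argument, and the Cantor-tree idea exploiting non-total-boundedness of small balls in a non-locally-compact, invariantly metrized group is exactly the right engine.

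That said, two points in your plan are soft and, as written, do not close. First, at stage $n$ you propose to choose $x_n$ so that the Bernoulli measure of $\{\alpha: g_\alpha\in f_kKg_k\}$ is at most $2^{-n}$ for each $k\le n$. But $g_\alpha$ depends on the \emph{entire} sequence $(x_m)_{m\ge 1}$, so this set is not determined at stage $n$; you must work instead with the finite approximants $g_{\alpha|_n}$ and with neighbourhoods of the translates, and then pass to the limit. Second, the density reduction --- from $\mu(f_kKg_k)=0$ for a countable dense family to $\mu(fKg)=0$ for all $f,g$ --- is not automatic: the map $(f,g)\mapsto\mu(fKg)$ is not continuous, and one needs at least an upper-semicontinuity argument through closed neighbourhoods of $fKg$ to make the limit work.

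Dougherty's actual proof sidesteps both issues by splitting each node not into two but into $N_n$ children, chosen pairwise $\epsilon_n$-separated inside $B(e,2^{-n})$ (non-local-compactness supplies arbitrarily many). Since the metric is two-sided invariant, every translate $fKg$ is isometric to $K$ and hence is covered by the same number $M_n$ of balls of radius $\epsilon_n/3$; thus at most $M_n$ of the $N_n$ children of any node can have descendants meeting $fKg$. Choosing $N_n\ge 2M_n$ gives $\mu(fKg)\le\prod_n(M_n/N_n)=0$ directly for \emph{every} $f,g$, with no density argument needed. Recasting your outline along these lines would make it complete.
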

\begin{rem} In \cite{Pan14-1}(see proof of Theorem 4.1, Step 2) has been constructed a partition $\Phi=\{ A_{\theta} : \theta \in [0,1] \}$ of the ${\bf R}^N$ into Haar ambivalent sets such that  for each $\theta_1,\theta_2 \in [0,1]$ there exists  an isometric( with respect to Tychonoff metric which is invariant under translates) Borel measurable bijection $A_{(\theta_1,\theta_2)}$   of ${ \bf R}^N$ such that
$A_{(\theta_1,\theta_2)}(A_{\theta_1}) \Delta A_{\theta_2}$ is shy. In this context and concerning with Lemma 6.16 it is natural to ask  whether an arbitrary  Polish non-locally-compact group with an invariant metric admits a similar partition into  Haar ambivalent sets.  Notice that we have no any information in this direction.
\end{rem}
\begin{thm}  Let  $G$ be a Polish non-locally-compact group admitting
an invariant metric. Then there exists  a statistical
 structure $\{(G, \mathcal{B},\mu_{\theta}): \theta \in \Theta\}$ in $G$  which has an objective consistent estimate of a parameter $\theta$ such that:

 (i)~$\Theta \subseteq G$ and  $\mbox{card}(\Theta)=2^{\aleph_0}$;

 (ii)~ $\mu_{\theta}$ is the restriction of the  Dirac measure concentrated at the point $\theta$ to the Borel $\sigma$-algebra  $\mathcal{B}(G)$  for each $\theta \in \Theta$.
\end{thm}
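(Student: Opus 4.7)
The plan is to use Solecki's dichotomy (Lemma 6.16) to produce uncountably many mutually disjoint Haar ambivalent fibres inside $G$, then take $\Theta$ to be a set of representatives (one per fibre) and let $T$ be the projection onto the chosen representative.

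First I apply Lemma 6.16 to obtain a closed set $F \subseteq G$ and a continuous map $\phi : F \to 2^N$. The Solecki property forces every fibre $\phi^{-1}(x)$ to be non-empty (it absorbs a translate of every compact set, in particular of every singleton), so by the axiom of choice I pick, for each $x \in 2^N$, one point $\theta_x \in \phi^{-1}(x)$ and set $\Theta := \{\theta_x : x \in 2^N\}$, obtaining $\Theta \subseteq G$ with $\mathrm{card}(\Theta) = 2^{\aleph_0}$. Fix a distinguished index $x_0 \in 2^N$, write $\theta_0 := \theta_{x_0}$, and define $T : G \to \Theta$ by $T(g) = \theta_{\phi(g)}$ for $g \in F$ and $T(g) = \theta_0$ for $g \in G \setminus F$. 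For the measures in (ii) take $\mu_\theta := \delta_\theta|_{\mathcal{B}(G)}$.

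Since $L(\Theta)$ consists of countable and co-countable subsets of $\Theta$, the $(\mathcal{B}(G), L(\Theta))$-measurability of $T$ reduces to the Borelness of each fibre: $T^{-1}(\theta_x) = \phi^{-1}(x)$ for $x \neq x_0$ and $T^{-1}(\theta_0) = \phi^{-1}(x_0) \cup (G \setminus F)$, both Borel by continuity of $\phi$ and closedness of $F$. Consistency is immediate, since $T(\theta_x) = \theta_x$ gives $\mu_{\theta_x}(T^{-1}(\theta_x)) = 1$ for every $\theta_x \in \Theta$.

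The main obstacle, and the only non-routine step, is verifying Haar ambivalence of each fibre. For non-shyness of $\phi^{-1}(x)$: if some Borel probability measure $\nu$ were transverse to $\phi^{-1}(x)$, Remark 6.3 lets me assume $\nu$ concentrates on a compact set $K$ with $\nu(K) = 1$, and Lemma 6.16 then furnishes $g \in G$ with $gK \subseteq \phi^{-1}(x)$, so $K \subseteq g^{-1}\phi^{-1}(x)$ and $\nu(g^{-1}\phi^{-1}(x)) \geq 1$, contradicting the transversality relation $\nu(g^{-1}\phi^{-1}(x)) = 0$. The same argument applied to any $y \neq x$ shows that $\phi^{-1}(y)$ is non-shy, hence the complement of $T^{-1}(\theta_x)$, which contains some such $\phi^{-1}(y)$ (choosing $y \neq x_0$ in the exceptional case), is non-shy, so $T^{-1}(\theta_x)$ is non-prevalent. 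Combined, every fibre is Haar ambivalent, so $T$ is an objective consistent estimate for the structure $(G, \mathcal{B}(G), \mu_\theta)_{\theta \in \Theta}$. The key insight is that Solecki's lemma, together with the compact-carrier reduction of Remark 6.3, supplies precisely the translate-absorbing property needed to break any putative transversality.
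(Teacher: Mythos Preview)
Your proof is correct and follows essentially the same route as the paper: invoke Solecki's Lemma~6.16 to obtain the fibration $\phi:F\to 2^N$, form the partition $\{\phi^{-1}(x):x\neq x_0\}\cup\{\phi^{-1}(x_0)\cup(G\setminus F)\}$, choose a selector $\theta_x$ from each piece, and let $T$ send each piece to its representative. The paper's proof simply asserts that each partition piece is Haar ambivalent, whereas you supply the verification explicitly---using the compact-carrier reduction of Remark~6.3 together with the translate-absorbing property of Lemma~6.16 to rule out shyness, and the non-shyness of a disjoint fibre to rule out prevalence---so your write-up is in fact more complete on this point.
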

\begin{proof} By virtue of Lemma 6.16, there exists a closed set $F\subseteq G$ and a continuous
function  $ \phi: F \to 2^N$  such that for any $x  \in 2^N$ and any compact set $K \subseteq G$
there is $g \in G$ with $g K \subseteq \phi^{-1}(x)$.
For $x \in 2^N \setminus \{(0,0,\cdots)\}$ we put
\begin{equation}
X_{x}=\phi^{-1}(x).
\label{ccs}
\end{equation}
We set $X_{(0,0,\cdots)}=\phi^{-1}((0,0,\cdots)) \cup (G \setminus F).$
Thus we have a partition $\{ X_{x}: x \in 2^N \}$ of $G$ into Borel subsets   such that each element of the partition is Borel measurable and  Haar ambivalent set. Let $\{ \theta_x : x \in 2^N  \}$ be any selector. We put $\Theta=\{\theta : \theta=\theta_x~\mbox{  for ~some}~  x \in 2^N\}$ and denote by $\mu_{\theta}$ the restriction of the Dirac measure concentrated at the point $\theta$ to the $\sigma$-algebra $\mathcal{B}(G)$. Thus we have constructed  a statistical
 structure $\{(G, \mathcal{B},\mu_{\theta}): \theta \in \Theta\}$ in $G$. We put $T(g)=\theta$  for each $g \in X_{\theta}$. Now it is obvious that
 $T$ is the objective consistent estimate of a parameter $\theta$ for the statistical structure $\{(G, \mathcal{B},\mu_{\theta}): \theta \in \Theta\}$ in $G$ such that the conditions (i)-(ii) are fulfilled.
 \end{proof}

\begin{thm}  Let $G$ be a Polish non-locally-compact group admitting
an invariant metric. Let $\mu$ be a Borel probability measure whose carrier is a compact set $K_0$( i.e., $\mu(G \setminus K_0)=0$).  Then there exists  a statistical
 structure $\{(G, \mathcal{B},\mu_{\theta}): \theta \in \Theta\}$ in $G$  which has an objective consistent estimate of a parameter $\theta$ such that

 (i)~$\Theta \subseteq G$ and  $\mbox{card}(\Theta)=2^{\aleph_0}$;

 (ii)~ $\mu_{\theta}$ is a $\theta$-shift of the measure $\mu$ (i.e. $\mu_{\theta}(X)=\mu(\theta^{-1} X)$ for $X \in \mathcal{B}(G)$ and $\theta \in \Theta$).
\end{thm}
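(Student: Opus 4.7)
The plan is to adapt the construction used in Theorem 6.18 by replacing point masses with translates of the carrier $K_0$ of $\mu$, so that the whole support of each $\mu_\theta$ sits inside a single cell of the Solecki partition. Fix the compact carrier $K_0$ of $\mu$ (non-empty because $\mu$ is a probability measure). Invoke Lemma 6.16 to obtain the closed set $F\subseteq G$ and the continuous map $\phi:F\to 2^N$, and form exactly the same partition as in Theorem 6.18: set $X_x=\phi^{-1}(x)$ for $x\in 2^N\setminus\{(0,0,\dots)\}$ and $X_{(0,0,\dots)}=\phi^{-1}((0,0,\dots))\cup(G\setminus F)$. By Lemmas 6.16 and 6.17 (together with the argument already used in Theorem 6.18), each $X_x$ is a Borel Haar ambivalent set and the family $\{X_x:x\in 2^N\}$ is a partition of $G$.

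Next I would use the translation-absorbing property of $\phi^{-1}(x)$ a second time, but now with the compact set $K_0$ playing the role of the ``test compact'' from Lemma 6.16. For every $x\in 2^N$ there exists $\theta_x\in G$ with
\begin{equation*}
\theta_x K_0\subseteq \phi^{-1}(x)\subseteq X_x.
\end{equation*}
Choose one such $\theta_x$ for each $x$ (a selector) and put $\Theta=\{\theta_x:x\in 2^N\}$. The $\theta_x$ are automatically pairwise distinct, because if $\theta_x=\theta_{x'}$ with $x\ne x'$ then $\theta_xK_0\subseteq X_x\cap X_{x'}=\varnothing$, contradicting $K_0\ne\varnothing$. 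Hence $\mathrm{card}(\Theta)=2^{\aleph_0}$, giving (i). Define $\mu_{\theta_x}(Y)=\mu(\theta_x^{-1}Y)$ for $Y\in\mathcal{B}(G)$; then (ii) is built in by construction and $\mu_{\theta_x}$ is concentrated on $\theta_xK_0\subseteq X_x$, so $\mu_{\theta_x}(X_x)=1$.

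Finally, define $T:G\to\Theta$ by $T(g)=\theta_x$ whenever $g\in X_x$. This is well-defined since $\{X_x\}$ is a partition of $G$. Since $L(\Theta)$ is generated by singletons (each member of $L(\Theta)$ being countable or co-countable in $\Theta$) and every $T^{-1}(\theta_x)=X_x$ is Borel, $T$ is $(\mathcal{B}(G),L(\Theta))$-measurable. For every $\theta\in\Theta$ we have $\mu_\theta(T^{-1}(\theta))=\mu_\theta(X_\theta)=1$, verifying the consistency condition of Definition 6.9, and $T^{-1}(\theta)=X_\theta$ is Haar ambivalent, verifying objectivity.

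The only genuinely non-trivial point is that Solecki's lemma can be applied to $K_0$ itself for every $x\in 2^N$ simultaneously, producing a family of shifts $\theta_x$ large enough to exhaust the $2^{\aleph_0}$ cells; everything else is bookkeeping. Note that the argument uses a selector over $2^N$, hence the Axiom of Choice, exactly as in Theorem 6.18. If one wished to avoid it, the main obstacle would be to manufacture an explicit Borel cross-section $x\mapsto\theta_x$, which is not required by the statement and which I would not attempt here.
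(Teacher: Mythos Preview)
Your proof is correct and follows essentially the same route as the paper: invoke Solecki's Lemma 6.16, build the partition $\{X_x:x\in 2^N\}$ into Borel Haar ambivalent sets, apply the absorbing property with $K=K_0$ to pick $\theta_x$ with $\theta_xK_0\subseteq X_x$, and let $T$ send each cell to its chosen $\theta_x$. You actually supply one step the paper leaves implicit, namely the verification that the $\theta_x$ are pairwise distinct (hence $\mathrm{card}(\Theta)=2^{\aleph_0}$), and you are right to flag the use of a selector over $2^N$.
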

\begin{proof} By virtue of Lemma 6.16, there exists a closed set $F \subseteq G$ and a continuous
function  $ \phi: F \to 2^N$  such that for any $x  \in 2^N$ and any compact set $K \subseteq G$
there is $g \in G$ with $g K \subseteq \phi^{-1}(x)$.
For $x \in 2^N \setminus \{(0,0,\cdots)\}$ we put
$X_{x}=\phi^{-1}(x)$. We set $X_{(0,0,\cdots)}=\phi^{-1}((0,0,\cdots)) \cup (G \setminus F)$. Thus we have a partition $\{ X_{x}: x \in 2^N \}$ of $G$ into Borel subsets   such that each element of the partition is Borel measurable, Haar ambivalent set  and for any $x  \in 2^N$ and any compact set $K \subseteq G$
there is $g \in G$ with $g K \subseteq X_{x}$.
If we take under $K$ a set $K_0$, then for any $x  \in 2^N$ there is $g(K_0,x) \in G$ with $g(K_0,x)K_0 \subseteq X_x$.
We put $\Theta=\{\theta: \theta=g(K_0,x)~\&~x \in 2^N\}$. For each $\theta \in \Theta$ and $X \in \mathcal{B}(G)$, we put $\mu_{\theta}(X)=\mu(\theta^{-1} X)$.
For $g \in X_{x}$ we put $T(g)=g(K_0,x)$. Let us show that $T:G \to \Theta$ is an objective  consistent estimate of a parameter $\theta$. Indeed, on the one hand, for each $\theta \in \Theta$ we have
\begin{equation*}
  \mu_{\theta}(T^{-1}(\theta))=\mu_{g(K_0,x)}(T^{-1}(g(K_0,x)))=\mu_{g(K_0,x)}(X_{x})=\mu(g(K_0,x)^{-1} X_{x})\ge
  \end{equation*}
  \begin{equation}
  \mu(g(K_0,x)^{-1}g(K_0,x)K_0)=\mu(K_0)=1,\label{ccs}
\end{equation}
  which means that $T:G \to \Theta$ is a consistent estimate of a parameter $\theta$.
On the other hand, for each $\theta=g(K_0,x) \in \Theta$ we have that a set $T^{-1}(\theta)=T^{-1}(g(K_0,x))=X_x$ is Borel measurable and a Haar ambivalent set which together with the latter relation implies that $T:G \to \Theta$ is an objective  consistent estimate of a parameter $\theta$. Now it is obvious to check that for the statistical structure $\{(G, \mathcal{B},\mu_{\theta}): \theta \in \Theta\}$  the conditions (i)-(ii) are fulfilled.
 \end{proof}
The next theorem shows whether  can be constructed  an objective consistent estimates by virtue of some consistent estimates in a Polish non-locally-compact group admitting
an invariant metric.
\begin{thm}  Assume $G$ is a Polish, non-locally-compact group admitting
an invariant metric. Let $\mbox{card}(\Theta)=2^{\aleph_0}$ and $T : G \to \Theta$ be  a  consistent estimate of a parameter $\theta$ for the
family of Borel probability measures $(\mu_{\theta})_{\theta \in \Theta}$ such that there exists $\theta_0 \in \Theta$ for which
$T^{-1}(\theta_0)$ is a prevalent set. Then there exists an objective consistent estimate of a parameter $\theta$ for the
family $(\mu_{\theta})_{\theta \in \Theta}$.
\end{thm}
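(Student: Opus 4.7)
The plan is to modify the given estimate $T$ only outside the concentration sets of the measures $\mu_\theta$, redistributing the ``prevalent mass'' of $T^{-1}(\theta_0)$ across all of $\Theta$ so that every new preimage becomes Haar ambivalent. First I would note that, because $T^{-1}(\theta_0)$ is prevalent, the Borel set $B := G \setminus T^{-1}(\theta_0) = \bigcup_{\theta \neq \theta_0} T^{-1}(\theta)$ is shy; in particular each $T^{-1}(\theta)$ with $\theta \neq \theta_0$ is shy. Applying Lemma 2.6 to $\mu_{\theta_0}$ and intersecting with $T^{-1}(\theta_0)$, I would fix a $K_\sigma$ subset $S \subseteq T^{-1}(\theta_0)$ with $\mu_{\theta_0}(S) = 1$; by Lemma 6.17 this $S$ is itself shy.

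Next, using $\mbox{card}(\Theta) = 2^{\aleph_0}$, I would transport the construction from the proof of Theorem 6.18 through a bijection $2^N \to \Theta$ to obtain a Borel partition $\{Y_\theta : \theta \in \Theta\}$ of $G$ whose every block is Haar ambivalent. The pivotal auxiliary observation is that $Y_\theta \cap T^{-1}(\theta_0)$ remains Haar ambivalent for every $\theta$: it differs from $Y_\theta$ only by the shy set $Y_\theta \cap B$, and neither shyness nor prevalence of a Borel set is affected by a shy symmetric difference.

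Then I would define the Borel sets
\begin{equation*}
Z_{\theta_0} = (Y_{\theta_0} \cap T^{-1}(\theta_0)) \cup S, \qquad Z_\theta = ((Y_\theta \cap T^{-1}(\theta_0)) \setminus S) \cup T^{-1}(\theta) \quad (\theta \neq \theta_0),
\end{equation*}
and set $T^{\ast}(g) = \theta$ iff $g \in Z_\theta$. A short set-theoretic verification, based on the identities $T^{-1}(\theta_0) = \bigcup_\theta (Y_\theta \cap T^{-1}(\theta_0))$ (disjoint union) and $G = T^{-1}(\theta_0) \cup B$ (also disjoint), shows that $\{Z_\theta\}_{\theta \in \Theta}$ is a Borel partition of $G$, so $T^{\ast}$ is $(\mathcal{B}(G), L(\Theta))$-measurable. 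Consistency is then immediate: $\mu_{\theta_0}(Z_{\theta_0}) \ge \mu_{\theta_0}(S) = 1$, and $\mu_\theta(Z_\theta) \ge \mu_\theta(T^{-1}(\theta)) = 1$ for $\theta \neq \theta_0$. Haar ambivalence of each $Z_\theta$ follows from the pivotal observation above: $Z_\theta$ contains a Haar ambivalent piece (so is non-shy), while its complement contains $Y_{\theta'} \cap T^{-1}(\theta_0)$ for any $\theta' \neq \theta$ (which is non-shy, so $Z_\theta$ is not prevalent).

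The main obstacle I anticipate is the structural step of splitting the prevalent set $T^{-1}(\theta_0)$ into continuum-many Haar ambivalent pieces without disturbing where $\mu_{\theta_0}$ actually lives; once Theorem 6.18's partition is imported into $T^{-1}(\theta_0)$ and combined with the ``shy modifications preserve ambivalence'' principle, all remaining verifications reduce to bookkeeping.
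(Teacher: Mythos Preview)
Your proposal is correct and follows essentially the same route as the paper: both isolate a shy $K_\sigma$ carrier for $\mu_{\theta_0}$ via Lemma~2.6 and Lemma~6.17, invoke Solecki's partition (Lemma~6.16, packaged in the paper as the proof of Theorem~6.19---your reference ``Theorem~6.18'' is a Remark, so adjust the numbering) to split $G$ into continuum many Haar ambivalent Borel pieces, and then glue each piece to the corresponding $T^{-1}(\theta)$. Your bookkeeping---intersecting the partition blocks with $T^{-1}(\theta_0)$ before reattaching $S$ and the old fibres---is in fact a bit tidier than the paper's, which writes $B_\theta=(\phi^{-1}(f^{-1}(\theta))\setminus S)\cup S_\theta$ and thereby leaves the reader to supply that $\tilde S_\theta$ (not $S_\theta$) is meant at $\theta_0$ and that $G\setminus F$ must be absorbed into one block.
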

\begin{proof} For $\theta \in \Theta$ we put $S_{\theta}=T^{-1}(\theta)$. Since $S_{\theta_0}$ is a prevalent set
we deduce that
\begin{equation}
\cup_{ \theta \in \Theta \setminus \{\theta_0\}}S_{\theta}={\bf  R}^N \setminus S_{\theta_0}\label{ccs}
\end{equation} is shy in $G$.

By Lemma 2.6  we know that the measure $\mu_{\theta_0}$ is concentrated on a union of a countable family of compact subsets $\{F_k^{(\theta_0)}:k \in N\}$. By Lemma 6.7 we know that $\cup_{k \in N}F_k^{(\theta_0)}$ is shy in $G$.

We put $\tilde{S}_{\theta}=S_{\theta}$ for $\theta \in \Theta \setminus \{\theta_0\}$  and $\tilde{S}_{\theta_0}=\cup_{k \in N}F_k^{(\theta_0}$.
Clearly, $S=\cup_{\theta \in \Theta}\tilde{S}_{\theta}$ is shy in $G$.

By virtue of Lemma 6.16, there exists a closed set $F \subseteq G$ and a continuous
function  $ \phi: F \to 2^N$  such that for any $x  \in 2^N$ and any compact set $K \subseteq G$
there is $g \in G$ with $g K \subseteq \phi^{-1}(x)$. Let $f: 2^N \to \Theta$ be any bijection. For $\theta \in \Theta$ we put

\begin{equation}
B_\theta= ( \phi^{-1}(f^{-1}(\theta)) \setminus S) \cup S_{\theta}.
\label{ccs}
\end{equation}
 Notice that $(B_\theta)_{\theta \in \Theta}$ is a partition of $G$ into Haar ambivalent sets.
 We put  $T_1(g)=\theta$ for $g \in B_{\theta} (\theta \in \Theta)$.
 Since
\begin{equation}
\mu_{\theta}(T_1^{-1}(\theta))=\mu_{\theta}(B_{\theta}) \ge \mu_{\theta}(S_{\theta})=1\label{ccs}
\end{equation}
 for $\theta \in \Theta$, we claim that $T_1$ is a consistent estimate of a parameter $\theta$ for the
family $(\mu_{\theta})_{\theta \in \Theta}$.
 Since $T_1^{-1}(\theta))=B_{\theta}$ is a Borel and Haar ambivalent set for each $\theta \in \Theta$ we end the proof of the theorem.
\end{proof}

\begin{ex} Let $F$ be a distribution function on ${\bf  R}$  such that the integral $\int_{{\bf  R}}xdF(x)$ exists and is equal to zero.
Suppose that $p$ is a Borel probability measure on ${\bf  R}$ defined by $F$. For $\theta \in \Theta(:={\bf  R})$, let $p_{\theta}$ be $\theta$-shift of the measure $p$(i.e., $p_{\theta}(X)=p(X-\theta)$ for $X \in \mathcal{B}({\bf  R})$). Setting, $G={\bf  R}^N$, for $\theta \in \Theta$ we put $\mu_{\theta}=p_{\theta}^N$, where $p_{\theta}^N$ denotes the infinite
power of the measure $p_{\theta}$. We set $T((x_k)_{k \in N})=\lim_{n \to \infty}\frac{\sum_{k=1}^nx_k}{n}$, if $\lim_{n \to \infty}\frac{\sum_{k=1}^nx_k}{n}$ exists, is finite and differs from the zero, and
 $T((x_k)_{k \in N})=0$, otherwise. Notice that  $T : {\bf  R}^N \to \Theta$ is a  consistent estimate of a parameter $\theta$ for the
family $(\mu_{\theta})_{\theta \in \Theta}$ such that
$T^{-1}(0)$ is a prevalent set. Indeed, by virtue the Strong Law of Large Numbers, we know that
\begin{equation}
\mu_{\theta}(\{ (x_k)_{k \in N}:\lim_{n \to \infty}\frac{\sum_{k=1}^nx_k}{n}=\theta\}=1
\label{ccs}
\end{equation}
for $\theta \in \Theta$.

Following  \cite{Pan14}(Lemma 4.14, p. 60), a set $S$ defined by

\begin{equation}
S=\{ (x_k)_{k \in N}:\lim_{n \to \infty}\frac{\sum_{k=1}^nx_k}{n} ~\mbox{exists~and ~is~ finite} \},\label{ccs}
\end{equation}
 is Borel shy set, which implies that ${\bf  R}^N \setminus S$ is a prevalent set. Since ${\bf  R}^N \setminus S  \subseteq  T^{-1}(0)$, we deduce that
$T^{-1}(0)$ is a prevalent set. Since for the statistical structure $\{({\bf  R}^N, \mathcal{B}({\bf  R}^N),\mu_{\theta}): \theta \in \Theta\}$ all conditions of the Theorem 6.21 are fulfilled, we claim that there exists  an objective consistent estimate of a parameter $\theta$ for the
family $(\mu_{\theta})_{\theta \in \Theta}$.
\end{ex}

Notice that in  Theorem 4.1 (see also \cite{Pan14-1}, Theorem 3.1, p. 117)  has been considered an  example of a  strong objective infinite sample consistent estimate  of an unknown parameter for a certain  statistical structure in the Polish non-locally compact abelian group ${\bf  R}^N$. In context with this example we state the following
\begin{prob}Let $G$ be a Polish  non-locally-compact group admitting
an invariant metric. Does there exist a statistical structure $\{ (G,\mathcal{B}(G),\mu_{\theta}):\theta
\in \Theta \}$ with $\mbox{card}(\Theta)=2^{\aleph_0}$ for which there exists a strong objective consistent estimate of a parameter $\theta$?
\end{prob}

\section{On objective and strong objective  consistent estimates of an unknown parameter in a compact Polish
 group $\{0,1\}^N$}

 Let   $x_1,x_2, \cdots, x_k, \cdots$  be an infinite sample obtained by coin tosses. Then the statistical structure described this experiment has the form:
\begin{equation}
\{(\{0,1\}^N,B(\{0,1\}^N),\mu_{\theta}^N): \theta \in (0,1)\}
\label{ccs}
\end{equation}
where $\mu_{\theta}(\{1\})=\theta$ and $\mu_{\theta}(\{0\})=1-\theta$.
By virtue of the Strong Law of Large Numbers we have
\begin{equation}
 \mu_{\theta}^N(\{(x_k)_{k \in N}: (x_k)_{k \in N}\in \{0,1\}^N~\&~\lim_{n \to \infty}\frac{\sum_{k=1}^nx_k}{n}=\theta\})=1
 \label{ccs}
\end{equation}
 for each $\theta \in (0,1)$.

Notice that for each $k \in N$, $G_k= \{0,1\}$ can be considered as a compact group  with an addition group operation $\pmod{2}$.
Hence the space of all infinite samples $G:=\{0,1\}^N$ can be presented as an infinite product of compact groups $\{G_k:k \in N\}$, i.e. $G=\prod_{k \in N}G_k$. Also, that the group $G$ admits an invariant metric $\rho$ which is defined by $\rho ((x_k)_{k \in N}, (y_k)_{k \in N})=\sum_{k \in N}\frac{|x_k-y_k \pmod{2}|}{2^{k+1}(1+|x_k-y_k \pmod{2}|)}$ for  $(x_k)_{k \in N}, (y_k)_{k \in N} \in G$.
It is obvious that the measure $\lambda_k$ on $G_k$ defined by $\lambda_k(\{0\})= \lambda_k(\{1\})=1/2$ is a probability Haar measure in $G_k$ for each $k \in N$ and for the probability Haar measure $\lambda$ in $G$ the following equality $\lambda=\prod_{k \in N}\lambda_k$ holds true, equivalently, $\lambda=\mu_{0,5}^N$.

By virtue (7.2) we  deduce that  the set
\begin{equation}
A(0,5)=\{(x_k)_{k \in N}: (x_k)_{k \in N}\in \{0,1\}^N~\&~
\lim_{n \to \infty}\frac{\sum_{k=1}^nx_k}{n}=0,5\}\label{ccs}
\end{equation} is prevalence.
Since $A(\theta) \subset G \setminus A(0,5)$ for each $\theta \in (0;1)\setminus \{1/2\}$, where
\begin{equation}
A(\theta)=\{(x_k)_{k \in N}: (x_k)_{k \in N}\in \{0,1\}^N~\&~\lim_{n \to \infty}\frac{\sum_{k=1}^nx_k}{n}=\theta\},\label{ccs}
\end{equation}
we deduce that  they all are shy(equivalently, of  Haar measure zero) sets.
In terms of \cite{HSY92}, this phenomena can be expressed in the following form.
\begin{thm} For "almost every"  sequence $(x_k)_{k \in N}\in \{0,1\}^N$ its Cezaro means $(\frac{\sum_{k=1}^nx_k}{n})_{n \in N}$ converges to $0,5$  whenever $n$ tends to $\infty$.
\end{thm}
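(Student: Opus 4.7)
The plan is to observe that $G=\{0,1\}^N$ is a \emph{compact} (hence locally compact) Polish group with invariant metric $\rho$, so by the characterization recalled in Remark 6.3, a Borel subset of $G$ is shy if and only if it has Haar measure zero, and a Borel subset is prevalent if and only if its complement has Haar measure zero. Consequently, to prove that $A(0,5)$ is prevalent (which is precisely the meaning of ``almost every'' in the sense used throughout the paper, cf.\ the footnote after Remark 2.8), it suffices to show that $A(0,5)$ has full Haar measure.

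Next, I would identify the normalized Haar measure $\lambda$ on $G$. Since each factor $G_k=\{0,1\}$ carries the uniform probability $\lambda_k$ with $\lambda_k(\{0\})=\lambda_k(\{1\})=1/2$, the product measure $\prod_{k\in N}\lambda_k$ is translation-invariant on $G$ (invariance on each factor combined with the Fubini-type structure of the product), and the space $G$ being compact it is the unique Haar probability. On the other hand, by construction $\mu_{0,5}^{N}=\prod_{k\in N}\lambda_k$, so
\begin{equation*}
\lambda=\mu_{0,5}^{N}.
\end{equation*}

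Now I invoke the Strong Law of Large Numbers: the coordinate projections $(x_k)_{k\in N}\mapsto x_k$ form, under $\mu_{0,5}^{N}$, a sequence of i.i.d.\ Bernoulli random variables with mean $1/2$. This gives exactly relation (7.2) with $\theta=1/2$, i.e.\
\begin{equation*}
\lambda(A(0,5))=\mu_{0,5}^{N}(A(0,5))=1.
\end{equation*}
Therefore $G\setminus A(0,5)$ is a Borel set of Haar measure zero, hence shy, and so $A(0,5)$ is prevalent in $G$. By Definition 2.3 (adapted to the group $G$, as in the footnote), ``almost every'' sequence $(x_k)_{k\in N}\in\{0,1\}^{N}$ belongs to $A(0,5)$, which is exactly the statement of the theorem.

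There is essentially no obstacle here: the whole argument is a direct combination of three inputs already established in the paper, namely the product structure of the Haar measure on $\{0,1\}^{N}$, the identification shy $\Leftrightarrow$ Haar-null in the locally compact setting (Remark 6.3), and the Strong Law of Large Numbers used in (7.2). The only small care needed is to recognize that the ``almost every'' language of Section 1 (in the sense of Christensen) translates on a locally compact group into the classical ``almost every'' with respect to Haar measure, which is precisely what makes the statement follow immediately from SLLN.
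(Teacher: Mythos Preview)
Your proposal is correct and follows essentially the same approach as the paper: the discussion preceding the theorem in Section~7 already identifies the Haar measure $\lambda$ on $\{0,1\}^N$ with $\mu_{0,5}^N$, applies the Strong Law of Large Numbers (equation~(7.2) with $\theta=0{,}5$) to obtain $\lambda(A(0{,}5))=1$, and then reads off that $A(0{,}5)$ is prevalent via the locally compact identification shy $\Leftrightarrow$ Haar null (Remark~6.3). Your write-up is simply a more explicit version of that same argument, with the relevant references to Remark~6.3 and the ``almost every'' convention spelled out.
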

By virtue the Strong Law of Large Numbers, we get

\begin{thm} Let fix $\theta_0 \in (0,1)$. For each $(x_k)_{k \in N}\in G$, we set $T((x_k)_{k \in N})= \lim_{n \to \infty}\frac{\sum_{k=1}^nx_k}{n}$ if this limit exists and differs from $\theta_0$, and  $T((x_k)_{k \in N})=\theta_0$, otherwise. Then $T$ is a consistent estimate of an unknown parameter $\theta$ for the statistical
structure $\{(G, \mathcal{B}(G), \mu_{\theta}): \theta \in \Theta \}$.
\end{thm}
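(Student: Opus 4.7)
The plan is to verify two things: that $T$ is $(\mathcal{B}(G), L(\Theta))$-measurable, and that $\mu_\theta^N(T^{-1}(\theta))=1$ for every $\theta \in \Theta=(0,1)$. Both reduce to standard facts once the special role of the parameter $\theta_0$ is isolated in the definition of $T$.

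First I would handle measurability. Put $S=\{(x_k)_{k\in N}\in G : \lim_n n^{-1}\sum_{k=1}^n x_k \text{ exists}\}$ and denote the limit by $L((x_k))$ on $S$. Then $S$ is Borel as the standard convergence set of a sequence of Borel functions, and $L:S\to[0,1]$ is Borel measurable as a pointwise limit of Borel functions where the limit exists. Consequently, for $\theta\in\Theta\setminus\{\theta_0\}$ the preimage $T^{-1}(\{\theta\})=\{(x_k)\in S : L((x_k))=\theta\}$ is Borel, while $T^{-1}(\{\theta_0\})=(G\setminus S)\cup\{(x_k)\in S : L((x_k))=\theta_0\}$ is Borel as well. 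Since every element of $L(\Theta)$ is either countable or co-countable in $(0,1)$, its preimage under $T$ is a countable (respectively co-countable) union of such Borel sets, hence Borel; so $T$ is $(\mathcal{B}(G), L(\Theta))$-measurable.

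Next I would verify the consistency identity. By the Strong Law of Large Numbers applied to the independent and identically distributed Bernoulli$(\theta)$ coordinates under $\mu_\theta^N$, the set $A(\theta)$ defined in (7.4) satisfies $\mu_\theta^N(A(\theta))=1$ for every $\theta\in(0,1)$. For $\theta\neq\theta_0$, every $(x_k)\in A(\theta)$ lies in $S$ with $L((x_k))=\theta\neq\theta_0$, hence $T((x_k))=\theta$ by definition; so $A(\theta)\subseteq T^{-1}(\theta)$ and $\mu_\theta^N(T^{-1}(\theta))\geq \mu_\theta^N(A(\theta))=1$. For $\theta=\theta_0$, every $(x_k)\in A(\theta_0)$ has arithmetic mean tending to $\theta_0$, which by definition falls into the \emph{otherwise} branch, so $T((x_k))=\theta_0$; thus $A(\theta_0)\subseteq T^{-1}(\theta_0)$ and the identity holds here as well.

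No step poses a genuine obstacle: the argument is essentially one invocation of SLLN packaged through the definition of $T$. The only bookkeeping point worth emphasizing is that $T$ deliberately routes both the case ``the limit equals $\theta_0$'' and the case ``the limit fails to exist'' to the single value $\theta_0$; this is exactly what is needed both to keep $T^{-1}(\theta_0)$ inside $\mathcal{B}(G)$ and to make the consistency equation go through at $\theta=\theta_0$.
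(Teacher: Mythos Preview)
Your proof is correct and follows the same approach as the paper, which simply invokes the Strong Law of Large Numbers (the paper's entire argument is the phrase ``By virtue the Strong Law of Large Numbers, we get'' placed immediately before the theorem). Your version is more thorough in that you explicitly check $(\mathcal{B}(G),L(\Theta))$-measurability via the countable/co-countable structure of $L(\Theta)$, a point the paper leaves implicit.
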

\begin{rem} Following Definition 6.9, the estimate $T$ is subjective because $T^{-1}(1/2)$ is a prevalent set. Unlike  Theorem 6.21, there does not exist an objective consistent estimate of an unknown parameter $\theta$ for any  statistical
structure $\{(G, \mathcal{B}(G), \mu_{\theta}): \theta \in \Theta \}$ for which $\hbox{card}(\Theta)>\aleph_0$, where $\aleph_0$ denotes the cardinality of the set of all natural numbers. Indeed, assume the contrary and let $T_1$ be such an estimate. Then we get the partition $\{ T_1^{-1}(\theta): \theta \in \Theta \}$ of the compact group $G$ into Haar ambivalent sets. Since each Haar ambivalent set is of positive $\lambda$-measure, we get that the probability Haar measure $\lambda$ does not satisfy Suslin  property provided that the cardinality of an arbitrary family  of pairwise disjoint Borel measurable sets of positive $\lambda$-measure in $G$ is not more than countable.

\end{rem}
\begin{rem} Let consider a mapping $F: G \to  [0,1]$ defined by $F((x_k)_{k \in N})=\sum_{k \in N}\frac{x_k}{2^k}$  for $(x_k)_{k \in N} \in G$. This a Borel isomorphism between
 $G$ and $[0,1]$  such that following equality $\lambda(X)=\ell_1(F(X))$ holds true for each $X \in \mathcal{B}(G)$.  By virtue the latter relation, for each natural number $m$, the exists a partition $\{ X_k : 1 \le k \le m \}$ of the group $G$ into Haar ambivalent sets such that for each  $1 \le  i \le j \le m$ there is an isometric Borel measurable bijection $f_{(i,j)} : G \to G$ such that the set $f_{(i,j)}(X_i)\Delta X_j$ is shy, equivalently, of the $\lambda$-measure zero.
\end{rem}
By the scheme presented in the proof of the Theorem 6.21, one can get the validity of the following assertions.
\begin{thm} Let $\Theta_1$ be a subset of the $\Theta$ with $\hbox{card}(\Theta)\ge 2$. Then there exists an objective consistent estimate of an unknown parameter $\theta$ for the statistical structure $\{(G, \mathcal{B}(G), \mu_{\theta}): \theta \in \Theta_1 \}$ if and only if ~$\mbox{card}(\Theta_1) \le \aleph_0$ and $1/2 \notin \Theta_1$.
\end{thm}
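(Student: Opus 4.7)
My plan is to prove the two implications separately, working throughout with the fact that $G=\{0,1\}^N$ is a \emph{compact} Polish group, so shy sets coincide with $\lambda$-null sets (where $\lambda=\mu_{1/2}^N$ is the Haar measure) and a Borel set is Haar ambivalent precisely when $0<\lambda(\cdot)<1$.

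For the necessity direction, I would derive two separate obstructions. First, if $1/2\in\Theta_1$ and $T$ is any consistent estimate, then $\lambda(T^{-1}(1/2))=\mu_{1/2}^N(T^{-1}(1/2))=1$, so $G\setminus T^{-1}(1/2)$ is $\lambda$-null, hence shy; thus $T^{-1}(1/2)$ is prevalent and not Haar ambivalent, contradicting objectivity. Second, if $\mbox{card}(\Theta_1)>\aleph_0$, then $\{T^{-1}(\theta):\theta\in\Theta_1\}$ is an uncountable family of pairwise disjoint Borel sets of positive $\lambda$-measure (each being Haar ambivalent), contradicting the countable chain condition (Suslin property) of the probability measure $\lambda$ already invoked in Remark 7.3.

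For the sufficiency direction, assume $\mbox{card}(\Theta_1)\le\aleph_0$, $\mbox{card}(\Theta_1)\ge 2$, and $1/2\notin\Theta_1$. My plan is to construct an explicit $T$ as follows. For each $\theta\in\Theta_1$, put
\begin{equation*}
A(\theta)=\Big\{(x_k)_{k\in N}\in G:\lim_{n\to\infty}\frac{\sum_{k=1}^{n}x_k}{n}=\theta\Big\}.
\end{equation*}
The Strong Law of Large Numbers yields $\mu_\theta^N(A(\theta))=1$, and applied to the measure $\lambda$ it gives $\lambda(A(\theta))=0$ (because $\theta\neq 1/2$), so each $A(\theta)$ is a shy Borel set; moreover the sets $A(\theta)$ are pairwise disjoint by uniqueness of Cesaro limits. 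Let $S=\bigcup_{\theta\in\Theta_1}A(\theta)$, a countable union of shy sets, hence shy by Lemma 2.4. Using the measure-preserving Borel isomorphism between $(G,\lambda)$ and $([0,1],\ell_1)$ recalled in Remark 7.5, I would pull back a partition of $[0,1]$ into pieces of prescribed positive measures summing to $1$ (for instance of measures $2^{-i}$, or $1/m$ when $\mbox{card}(\Theta_1)=m$) and intersect with $G\setminus S$ to obtain a Borel partition $\{C_\theta:\theta\in\Theta_1\}$ of $G\setminus S$ with $0<\lambda(C_\theta)<1$ for every $\theta$. Setting $B_\theta=A(\theta)\cup C_\theta$ produces a Borel partition of $G$ into Haar ambivalent sets with $\mu_\theta^N(B_\theta)\ge\mu_\theta^N(A(\theta))=1$. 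Defining $T:G\to\Theta_1$ by $T(g)=\theta$ for $g\in B_\theta$ gives a $(\mathcal{B}(G),L(\Theta_1))$-measurable map (since $L(\Theta_1)$ is the full power set of the countable set $\Theta_1$), and by construction it is an objective consistent estimate.

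The main obstacle is essentially only verifying that the partition $\{C_\theta\}$ of $G\setminus S$ can always be chosen with each piece of $\lambda$-measure strictly less than $1$; this is automatic once $\mbox{card}(\Theta_1)\ge 2$, since the pieces are nonempty with positive measure summing to $1$. The remaining ingredients — non-atomicity of $\lambda$, the Strong Law of Large Numbers giving $\mu_\theta^N(A(\theta))=1$ together with $\lambda(A(\theta))=0$ for $\theta\neq 1/2$, and the coincidence of shyness with Haar nullity in the compact group $G$ — are either standard or already recorded earlier in the paper.
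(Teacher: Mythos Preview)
Your proposal is correct and is exactly the kind of argument the paper has in mind. The paper gives no explicit proof of this theorem; it merely says the result follows ``by the scheme presented in the proof of Theorem~6.21,'' together with Remark~7.3 for the necessity of countability. Your necessity arguments (the $1/2$ case via $\lambda=\mu_{1/2}^N$, and the uncountable case via the Suslin property) are precisely those the paper records in Remark~7.3 and the surrounding discussion. For sufficiency, your construction follows the same ``shy core $S$ plus Haar-ambivalent partition of $G\setminus S$'' scheme as Theorem~6.21; the one genuine adaptation---replacing Solecki's Lemma~6.16 (which requires non-local-compactness) by the measure-preserving Borel isomorphism $G\to[0,1]$ of Remark~7.4 to produce the countable partition into pieces of prescribed positive $\lambda$-measure---is exactly what the compact setting forces, and is what the paper's terse reference is asking the reader to supply.
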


\begin{thm} Let $\Theta_2$ be a subset of the $\Theta$ $\hbox{card}(\Theta)\ge 2$. Then there exists a strong  objective consistent estimate of an unknown parameter $\theta$ for the statistical
structure $\{(G, \mathcal{B}(G), \mu_{\theta}): \theta \in \Theta_2 \}$ if and only if ~$\hbox{card}(\Theta_2) < \aleph_0$ and $1/2 \notin \Theta_2$.
\end{thm}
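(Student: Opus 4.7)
The plan is to prove the two implications separately, drawing on one structural fact used throughout: every isometric Borel bijection of the compact Polish group $G=\{0,1\}^N$ preserves the Haar probability measure $\lambda=\mu_{1/2}^N$ (and hence sends shy sets to shy sets). I will treat this as the main obstacle and return to it at the end.

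\textbf{Necessity.} Assume $T:G\to\Theta_2$ is a strong objective consistent estimate. First exclude $1/2$: if $1/2\in\Theta_2$ then $\mu_{1/2}^N=\lambda$ would give $\lambda(T^{-1}(1/2))=1$, making $T^{-1}(1/2)$ prevalent and contradicting the Haar ambivalence required by Definition~6.9. For finiteness, observe that $\{T^{-1}(\theta):\theta\in\Theta_2\}$ is a Borel partition of $G$ into Haar ambivalent sets, each of strictly positive $\lambda$-measure. Strong objectivity supplies isometric Borel bijections $A_{(\theta_1,\theta_2)}$ with
\begin{equation*}
\lambda\bigl(A_{(\theta_1,\theta_2)}(T^{-1}(\theta_1))\,\Delta\,T^{-1}(\theta_2)\bigr)=0,
\end{equation*}
and $\lambda$-preservation then forces $\lambda(T^{-1}(\theta_1))=\lambda(T^{-1}(\theta_2))$ for every pair. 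Thus all atoms share a common positive mass $c$, and $|\Theta_2|\cdot c=1$, so $|\Theta_2|<\aleph_0$.

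\textbf{Sufficiency.} Given $\Theta_2=\{\theta_1,\ldots,\theta_m\}$ with $1/2\notin\Theta_2$, apply Remark~7.4 with this $m$ to obtain a Borel partition $\{X_1,\ldots,X_m\}$ of $G$ into Haar ambivalent sets together with isometric Borel bijections $f_{(i,j)}:G\to G$ such that $f_{(i,j)}(X_i)\,\Delta\,X_j$ is shy. Set
\begin{equation*}
A(\theta)=\Bigl\{(x_k)\in G:\lim_{n\to\infty}\frac{1}{n}\sum_{k=1}^{n}x_k=\theta\Bigr\},\qquad S=\bigcup_{i=1}^{m}A(\theta_i).
\end{equation*}
Since $\theta_i\neq 1/2$ we have $S\subseteq G\setminus A(1/2)$, and $A(1/2)$ is prevalent by Theorem~7.1, so $S$ is shy. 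Define
\begin{equation*}
T^{-1}(\theta_i):=A(\theta_i)\cup(X_i\setminus S)\qquad(i=1,\ldots,m),
\end{equation*}
and put $T(x)=\theta_i$ for $x\in T^{-1}(\theta_i)$. The $T^{-1}(\theta_i)$'s are Borel and partition $G$; the Strong Law of Large Numbers yields $\mu_{\theta_i}^N(T^{-1}(\theta_i))\ge\mu_{\theta_i}^N(A(\theta_i))=1$, so $T$ is consistent; and since $T^{-1}(\theta_i)\,\Delta\,X_i\subseteq S$ is shy, $T^{-1}(\theta_i)$ inherits Haar ambivalence from $X_i$. Strong objectivity follows from the inclusion
\begin{equation*}
f_{(i,j)}(T^{-1}(\theta_i))\,\Delta\,T^{-1}(\theta_j)\subseteq f_{(i,j)}(S)\cup\bigl(f_{(i,j)}(X_i)\,\Delta\,X_j\bigr)\cup S,
\end{equation*}
which is a union of three shy sets, invoking once more that the isometric bijection $f_{(i,j)}$ preserves shyness.

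\textbf{Main obstacle.} The critical technical point is establishing that every isometric Borel bijection of $G$ preserves $\lambda$. The route I would take is to show that every such bijection is a translation, hence $\lambda$-preserving by invariance. Writing $c_k=2^{-k-2}$, the identity $c_N=\sum_{k>N}c_k$ tightly restricts which sequences can have a prescribed $\rho$-weight $\sum_{k:x_k=1}c_k$; a careful case analysis shows that an isometry fixing the identity must fix each canonical generator $e_k$, and continuity of the isometry together with density of the subgroup generated by $\{e_k\}_{k\in N}$ then forces the isometry to be the identity map.
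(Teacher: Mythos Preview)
Your proposal is correct and in fact supplies considerably more detail than the paper, which does not prove this theorem at all beyond the sentence ``By the scheme presented in the proof of the Theorem 6.21, one can get the validity of the following assertions.'' Your sufficiency argument is precisely that scheme: take the finite Haar-ambivalent partition with isometric matchings furnished by Remark~7.4, graft on the shy sets $A(\theta_i)$ to secure consistency, and check that symmetric differences stay shy. For necessity you fill in what the paper omits entirely --- the reason \emph{strong} objectivity forces finiteness rather than mere countability --- by arguing that the isometric bijections $A_{(\theta_1,\theta_2)}$ equalize the Haar masses $\lambda(T^{-1}(\theta))$.

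The obstacle you isolate, that every isometric Borel bijection of $(G,\rho)$ preserves $\lambda$, is exactly the point on which the whole necessity argument turns, and your translation sketch can be completed; note however that the inductive step needs one look-ahead (knowing $\phi(e_j)=e_j$ for $j\le k$ together with the constraint coming from $e_{k+1}$ is what rules out $\phi(e_k)=(0^{k},1,1,\dots)$). A shorter alternative avoids the case analysis: since translations are $\rho$-isometries, the $1$-dimensional Hausdorff measure $\mathcal{H}^1_\rho$ on $G$ is translation-invariant, and the cylinder estimate $\lambda(\{x:x_1=a_1,\dots,x_n=a_n\})=2^{-n}$ with diameter $c_n=2^{-n-2}$ shows $0<\mathcal{H}^1_\rho(G)<\infty$; uniqueness of Haar measure then gives $\mathcal{H}^1_\rho=\mathcal{H}^1_\rho(G)\cdot\lambda$, and isometries preserve $\mathcal{H}^1_\rho$ automatically.
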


%\section*{Acknowledgements} Thank you all for helping us writing this article

\end{document}